\newtheorem{theorem}{Theorem}[section]
\newtheorem{lemma}[theorem]{Lemma}
\newtheorem{corollary}[theorem]{Corollary}
\newtheorem{proposition}[theorem]{Proposition}
\theoremstyle{definition}
\newtheorem{definition}[theorem]{Definition}
\newtheorem{fact}[theorem]{Fact}
\newtheorem{claim}[theorem]{Claim}
\newtheorem*{convention}{Convention}
\numberwithin{equation}{section}
\title[Short Title]{The strong reflecting property and Harrington's Principle}
\author{Yong Cheng}
\address{Institut f\"ur mathematische Logik und Grundlagenforschung, Universit\"at M\"unster, Einsteinstr. 62, 48149  M\"unster, Germany}
\email{world-cyr@hotmail.com}
\thanks{Some materials of this paper are evolved from the author's Ph.D. thesis written in 2012 at the National University of Singapore under the supervision of Chong Chi Tat and W.Hugh Woodin. I would like to thank W.Hugh Woodin for his support and guidance on the thesis.  I would like to thank members of my Ph.D. committee. I would like to thank Ralf Schindler for his support through SFB 878. I would like to thank  referees for their careful reading and helpful comments.}
\subjclass[msc2010]{03E55,03E35}
\keywords{The strong reflecting property for $L$-cardinals, Harrington's Principle $HP(L)$, Large cardinals, Set forcing}
\begin{document}

\begin{abstract}
In this paper we characterize the strong reflecting property for $L$-cardinals for all $\omega_n$, characterize Harrington's Principle $HP(L)$ and its generalization and discuss the relationship between the strong reflecting property for $L$-cardinals and  Harrington's Principle $HP(L)$.
\end{abstract}

\maketitle

\section{Introduction and preliminaries}

The notion of the strong reflecting property for $L$-cardinals is introduced in \cite[Definition 2.8]{YongCheng1}. The motivation of introducing this notion is to force a set model of Harrington's Principle, $HP(L)$ for short (cf. Definition \ref{def of HP}), over higher order arithmetic (cf. Definition \ref{def of higher order ar}). However the proof of The Main Theorem in \cite{YongCheng1} uses very little knowledge about the strong reflecting property for $L$-cardinals. In this paper, in Section 2 we develop the full theory of the strong reflecting property for $L$-cardinals and characterize $SRP^{L}(\omega_n)$ for $n\in\omega$ (see Proposition \ref{srp of omega first}, Proposition \ref{second strong reflecting property}, Theorem \ref{strength of omega two} and Theorem \ref{characterization of srp above third level}). We also generalize some results on $SRP^{L}(\gamma)$ to $SRP^{M}(\gamma)$ for other inner  models $M$ (see Theorem \ref{characterizaton of zero sharp thm} and Theorem \ref{big thm about u}).

In Section 3, we define the generalized
Harrington's Principle $HP(M)$ for any inner model $M$, give
characterizations of $HP(M)$ for some well known inner models (see Theorem \ref{cited theorem from Woodin} and \ref{HP for measurable cn}) and show
that, in some cases, this generalized principle fails (see Corollary \ref{Theorem for core model} and Theorem \ref{HP for HOD}). In Section 4, we discuss the relationship between the strong reflecting property for $L$-cardinals and  Harrington's Principle $HP(L)$.

Our definitions and notations are standard. We refer to textbooks such as \cite{Jech}, \cite{Higherinfinite} and \cite{Kunen1} for the definitions and notations we use. For the definition of admissible set and admissible ordinal, see \cite{Constructiblity}. For notions of large cardinals, see \cite{Higherinfinite}. Our notations about forcing are standard (cf. \cite{Jech} and \cite{JamesCummings}). For the  theory of $0^{\sharp}$ see \cite{Constructiblity} and \cite{Jech}. Recall that $0^{\sharp}$ is the unique well founded remarkable $E.M.$ set, and $0^{\sharp}$ exists if and only if for some uncountable limit ordinal $\lambda, L_{\lambda}$ has an uncountable set of indiscernibles (cf. \cite{Constructiblity} and \cite{Jech}). For the  theory of $0^{\dag}$ see \cite{Higherinfinite}.

\begin{definition}\label{def of higher order ar}
(\cite{YongCheng1})\begin{enumerate}[(i)]
  \item $Z_{2}= ZFC^{-} +$ Any set is Countable.\footnote{$ZFC^{-}$ denotes $ZFC$  with the Power Set Axiom deleted and Collection instead of Replacement. For the discussion of the theory $ZFC$ without power set, see \cite{VictoriaGitman}.}
  \item $Z_{3}= ZFC^{-} + \mathcal{P}(\omega)$ exists + Any set is of cardinality $\leq \beth_1$.
      \item $Z_4=ZFC^{-}+ \mathcal{P}(\mathcal{P}(\omega))$ exists + Any set is of cardinality $\leq \beth_2$.
\end{enumerate}
\end{definition}

$Z_2, Z_3$ and $Z_4$ are the corresponding axiomatic systems for Second Order Arithmetic (SOA), Third Order Arithmetic and Fourth Order Arithmetic.

Throughout this paper whenever we write $X\prec H_{\kappa}$ and $\gamma\in X$, $\bar{\gamma}$ always denotes the image of $\gamma$ under the transitive collapse of $X$. If $U$ is an ultrafilter on $\kappa$, we say that $U$ is countably complete if and only if whenever $Y\subseteq U$ is countable, we have that $\bigcap Y\neq\emptyset$. The distinction between $V$-cardinals and $L$-cardinals is present throughout the article. Whenever we write $\omega_n$ (for some $n$) without a superscript it is understood that we mean the $\omega_n$ of $V$. In this paper, $\kappa$-model is a model in the form $L[U]$ such that $\langle L[U], \in, U\rangle\models U$ is a normal ultrafilter over $\kappa$.

\section{Characterizations of the strong reflecting property for $L$-cardinals}

In this section we develop the full theory of the strong reflecting property for $L$-cardinals and characterize $SRP^{L}(\omega_n)$ for $n\in\omega$. We also generalize some results on $SRP^{L}(\gamma)$ to $SRP^{M}(\gamma)$ for any inner  model $M$.

Recall that an inner model $M$ is $L$-like if $M$ is in the form $\langle L[\vec{E}], \in, \vec{E} \rangle$ where $\vec{E}$ is a coherent sequence of
extenders; moreover, for an $L$-like inner model $M, M|\theta$ is of the form $\langle J_{\theta}^{\vec{E}}, \in,\vec{E}\upharpoonright \theta, \varnothing\rangle$.\footnote{For the definition of coherent sequences of extenders $\vec{E}$, $J_{\alpha}^{\vec{E}}$ and $\vec{E}\upharpoonright \alpha$, see Section 2.2 in \cite{Steel}.}

\begin{convention}\label{convention}
Throughout, whenever we consider an inner model $M$ we assume that $M$ is $L$-like and has the property that $M|\theta$ is definable in $H_{\theta}$ for any regular cardinal $\theta>\omega_2$.\footnote{All known core models satisfy this convention.}
\end{convention}

\begin{definition}\label{definition of strong reflecting property and weakly}
Let $\gamma\geq\omega_1$ be an $L$-cardinal.
\begin{enumerate}[(i)]
  \item $\gamma$ has the strong reflecting property for $L$-cardinals, denoted $SRP^{L}(\gamma)$, if and only if for some  regular cardinal $\kappa> \gamma$,  if $X\prec H_{\kappa}, |X|=\omega$ and $\gamma\in X$, then $\bar{\gamma}$ is an $L$-cardinal.
  \item $\gamma$ has the weak reflecting property for $L$-cardinals, denoted $WRP^{L}(\gamma)$,  if and only if for some regular cardinal $\kappa> \gamma$, there is  $X\prec  H_{\kappa}$ such that $|X|=\omega, \gamma\in X$ and $\bar{\gamma}$ is an $L$-cardinal.
\end{enumerate}
\end{definition}

\begin{proposition}\label{reflecting}
Suppose $\gamma\geq\omega_1$ is an $L$-cardinal. Then the  following are equivalent:
\begin{enumerate}[(1)]
  \item $SRP^{L}(\gamma)$.
  \item For any regular cardinal $\kappa> \gamma$, if $X\prec H_{\kappa}, |X|=\omega$ and $\gamma\in X$, then $\bar{\gamma}$ is an $L$-cardinal.
      \item For some  regular cardinal $\kappa> \gamma, \{X\mid X\prec H_{\kappa},  |X|=\omega, \gamma\in X$ and $\bar{\gamma}$ is an $L$-cardinal\} contains a club.
      \item  There exists $F:\gamma^{<\omega}\rightarrow\gamma$ such that if  $X\subseteq\gamma$ is countable and  closed under $F$,\footnote{In this paper, we say that $X$ is closed under $F$ if $F``X^{<\omega}\subseteq X$.} then $o.t.(X)$ is an $L$-cardinal.
          \item For any  regular cardinal $\kappa> \gamma, \{X\mid X\prec H_{\kappa},  |X|=\omega, \gamma\in X$ and $\bar{\gamma}$ is an $L$-cardinal\} contains a club.
\end{enumerate}
\end{proposition}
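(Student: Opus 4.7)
The plan is to establish the cycle $(1) \Rightarrow (4) \Rightarrow (2) \Rightarrow (5) \Rightarrow (3) \Rightarrow (4)$ together with the immediate implication $(2) \Rightarrow (1)$. Two of the links are free: $(5) \Rightarrow (3)$ is trivial, and $(2) \Rightarrow (5)$ holds because $\{X \in [H_\kappa]^\omega : X \prec H_\kappa,\ \gamma \in X\}$ is itself a club in $[H_\kappa]^\omega$, and (2) asserts every member of it already has $\bar\gamma$ an $L$-cardinal. The substance therefore lies in the three implications $(1) \Rightarrow (4)$, $(4) \Rightarrow (2)$, and $(3) \Rightarrow (4)$.

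For $(1) \Rightarrow (4)$, I would fix the regular $\kappa > \gamma$ witnessing (1), enumerate the Skolem terms $\{\tau_i\}_{i<\omega}$ of $(H_\kappa, \in)$, and, after fixing an encoding of $\omega \times \gamma^{<\omega}$ into $\gamma^{<\omega}$, define $F : \gamma^{<\omega} \to \gamma$ by setting $F(i, \vec a) = \tau_i^{H_\kappa}(\vec a, \gamma)$ whenever that value lies in $\gamma$, and $F(i, \vec a) = 0$ otherwise. If $X \subseteq \gamma$ is countable and closed under $F$, then $Y := \operatorname{Sk}^{H_\kappa}(X \cup \{\gamma\})$ is a countable elementary substructure of $H_\kappa$ containing $\gamma$, and a term-by-term argument gives $Y \cap \gamma = X$: the inclusion $X \subseteq Y \cap \gamma$ is immediate, while any $\alpha \in Y \cap \gamma$ has the form $\tau_i^{H_\kappa}(\vec a, \gamma)$ for some $\vec a \in X^{<\omega}$, so $\alpha = F(i, \vec a) \in X$. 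Then (1) applied to $Y$ yields $o.t.(X) = \bar\gamma$ an $L$-cardinal.

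For $(4) \Rightarrow (2)$, I would fix $F$ witnessing (4), an arbitrary regular $\kappa > \gamma$, and a countable $X \prec H_\kappa$ with $\gamma \in X$. Using the running convention that $L$-information is definable in $H_\kappa$, the assertion ``there exists $F : \gamma^{<\omega} \to \gamma$ such that every countable $F$-closed subset of $\gamma$ has $L$-cardinal order type'' is expressible and true in $H_\kappa$, so by elementarity a witness $F' \in X$ is obtained; then $X \cap \gamma$ is $F'$-closed, so $\bar\gamma$ is an $L$-cardinal. For $(3) \Rightarrow (4)$, let $\kappa$ and a club $C$ witness (3); by the standard Kueker-type correspondence there is $G : H_\kappa^{<\omega} \to H_\kappa$ whose every countable closed subset of $H_\kappa$ lies in $C$. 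I would repeat the Skolem-hull construction, this time using Skolem terms of the expanded structure $(H_\kappa, \in, G)$ with both $\gamma$ and $G$ as fixed parameters, to extract $F : \gamma^{<\omega} \to \gamma$. Given countable $F$-closed $X \subseteq \gamma$, the hull $Y := \operatorname{Sk}^{(H_\kappa, \in, G)}(X \cup \{\gamma, G\})$ is countable, $G$-closed (since $G \in Y$ with $Y$ elementary), contains $\gamma$, and $Y \cap \gamma = X$ by the same term-by-term calculation; hence $Y \in C$ and $o.t.(X) = \bar\gamma$ is an $L$-cardinal.

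The main obstacle, shared by $(1) \Rightarrow (4)$ and $(3) \Rightarrow (4)$, is designing the function $F : \gamma^{<\omega} \to \gamma$ from the Skolem-function data of $H_\kappa$ (and, in the (3) case, of the club generator $G$) so that $F$-closure on the $\gamma$ side matches elementary-substructure closure on the $H_\kappa$ side precisely. The parameter $\gamma$ (and, where relevant, $G$) must be baked into the Skolem-term encoding carefully enough that $Y \cap \gamma = X$ holds on the nose, since any slack there breaks the order-type identification on which the argument depends.
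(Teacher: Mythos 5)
Your proposal is correct and matches the paper's strategy: the paper also reduces everything to the two substantive implications $(4)\Rightarrow(2)$ and $(3)\Rightarrow(4)$ (deferring their proofs to \cite[Proposition 2.7]{YongCheng1}), and your Skolem-hull/Kueker arguments for these, together with the reflection of the existential statement of $(4)$ into $X$ for $(4)\Rightarrow(2)$, are exactly the standard proofs being cited. The only difference is that your direct proof of $(1)\Rightarrow(4)$ is redundant work: the paper instead observes that $(1)\Rightarrow(3)$ is immediate (the witnessing $\kappa$ for $(1)$ makes the relevant set contain the club of all countable $X\prec H_{\kappa}$ with $\gamma\in X$), after which $(3)\Rightarrow(4)$ does the job.
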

\begin{proof}
Note that $(2)\Rightarrow (1), (1)\Rightarrow (3), (2)\Rightarrow (5)$ and $(5)\Rightarrow (3)$. It suffices to show that $(4)\Rightarrow (2)$ and $(3)\Rightarrow (4)$. For the proof see \cite[Proposition 2.7]{YongCheng1}.
\end{proof}

Suppose $\gamma\geq\omega_1$ is an $L$-cardinal. Let $(1)^{\ast}, (2)^{\ast}, (3)^{\ast},(4)^{\ast}$ and $(5)^{\ast}$ respectively be the statements which replace ``is an $L$-cardinal" with ``is not an $L$-cardinal" in Definition \ref{definition of strong reflecting property and weakly}(i) and  statements $(2), (3),(4)$ and $(5)$ in Proposition \ref{reflecting}. The following corollary is an observation from the proof of Proposition \ref{reflecting}.

\begin{corollary}\label{corollary about strong reflecting property}
$(1)^{\ast}\Leftrightarrow (2)^{\ast}\Leftrightarrow (3)^{\ast}\Leftrightarrow (4)^{\ast}\Leftrightarrow (5)^{\ast}$.
\end{corollary}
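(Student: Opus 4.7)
The plan is to rerun the argument of Proposition \ref{reflecting} verbatim, observing that the proof given in \cite[Proposition 2.7]{YongCheng1} uses the condition ``$\bar{\gamma}$ is an $L$-cardinal'' only as a unary predicate applied to the ordinal $\bar{\gamma} = o.t.(X \cap \gamma)$. Since ``$\bar{\gamma}$ is not an $L$-cardinal'' is equally a unary predicate on that ordinal, the same proof establishes the starred equivalences.

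First I would dispose of the purely quantifier-theoretic implications $(2)^{\ast} \Rightarrow (1)^{\ast}$, $(2)^{\ast} \Rightarrow (5)^{\ast}$, and $(5)^{\ast} \Rightarrow (3)^{\ast}$, which are weakenings of ``for every regular $\kappa$'' to ``for some regular $\kappa$'' and of ``holds for all $X$'' to ``contains a club''. For $(1)^{\ast} \Rightarrow (3)^{\ast}$: if every countable $X \prec H_\kappa$ containing $\gamma$ satisfies that $\bar{\gamma}$ is not an $L$-cardinal, then the set of such $X$ coincides with the entire space $\{X : X \prec H_\kappa, |X| = \omega, \gamma \in X\}$, which itself contains a club in $[H_\kappa]^{\omega}$.

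The substantive content lies in $(3)^{\ast} \Rightarrow (4)^{\ast}$ and $(4)^{\ast} \Rightarrow (2)^{\ast}$, and here I would invoke the same constructions used in \cite[Proposition 2.7]{YongCheng1}. From a club witnessing $(3)^{\ast}$, one extracts $F: \gamma^{<\omega} \to \gamma$ by coding (restrictions to $\gamma$ of) Skolem functions of $H_\kappa$ in the usual way, so that every countable $Y \subseteq \gamma$ closed under $F$ arises as $X \cap \gamma$ for some $X$ in the club; by hypothesis, $o.t.(Y) = \bar{\gamma}$ then fails to be an $L$-cardinal. Conversely, from $F$ witnessing $(4)^{\ast}$ and an arbitrary countable $X \prec H_\kappa$ with $\gamma \in X$, the elementarity of $X$ (together with the canonical definability of the coding of $F$ in $H_\kappa$) forces $X \cap \gamma$ to be closed under $F$, whence $\bar{\gamma}$ is not an $L$-cardinal.

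The only potential obstacle would be if the proof of Proposition \ref{reflecting} secretly exploited a closure or absoluteness property of the class of $L$-cardinals — for instance, some $\Sigma_1^L$-like behaviour of ``is an $L$-cardinal'' under transitive collapse. Inspection of the argument in \cite{YongCheng1} confirms this is not the case: the predicate enters only as a black box evaluated at $\bar{\gamma}$, and replacing it by its negation preserves every step. This symmetry is precisely the ``observation'' referred to in the statement, and it yields the corollary.
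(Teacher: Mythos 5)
Your proposal is correct and matches the paper's intent exactly: the paper gives no separate argument for this corollary, stating only that it is an observation from the proof of Proposition \ref{reflecting}, i.e.\ that the predicate ``$\bar{\gamma}$ is an $L$-cardinal'' enters that proof only as a black box evaluated at $o.t.(X\cap\gamma)$ and can be replaced by its negation throughout. Your explicit check of the quantifier-theoretic implications and of the two substantive ones, $(3)^{\ast}\Rightarrow(4)^{\ast}$ and $(4)^{\ast}\Rightarrow(2)^{\ast}$, is precisely the verification the paper leaves implicit.
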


\begin{proposition}\label{equivalent forms of strong reflecting property}
Suppose $\gamma\geq\omega_1$ is an $L$-cardinal, $\kappa$ is regular  and $|\gamma|=\kappa$. Then the  following are equivalent:
\begin{enumerate}[(a)]
\item $SRP^{L}(\gamma)$.
  \item For any bijection $\pi: \kappa\rightarrow \gamma$, there exists a club $D\subseteq\kappa$ such that for any $\theta\in D$, $o.t.(\{\pi(\alpha)\mid\alpha< \theta\})$ is an $L$-cardinal.
      \item For some bijection $\pi: \kappa\rightarrow \gamma$, there exists a club $D\subseteq\kappa$ such that for any $\theta\in D$, $o.t.(\{\pi(\alpha)\mid\alpha<\theta\})$ is an $L$-cardinal.
\end{enumerate}
\end{proposition}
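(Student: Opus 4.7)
The plan is to establish the cycle $(b) \Rightarrow (c) \Rightarrow (a) \Rightarrow (b)$; the direction $(b) \Rightarrow (c)$ is immediate, since a bijection $\pi : \kappa \to \gamma$ exists. A key preliminary observation, useful in both nontrivial directions, is that $SRP^{L}(\gamma)$ extends to elementary substructures of $H_{\kappa'}$ of \emph{arbitrary} cardinality: for any regular $\kappa' > \gamma$ and any $X \prec H_{\kappa'}$ with $\gamma \in X$, the ordinal $o.t.(X \cap \gamma)$ is an $L$-cardinal. I prove this by induction on $|X|$. The base case $|X| = \omega$ is Definition~\ref{definition of strong reflecting property and weakly}(i). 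For the inductive step, decompose $X$ as a continuous elementary chain $\langle X_\alpha : \alpha < |X| \rangle$ of strictly smaller substructures, all containing $\gamma$, apply the inductive hypothesis to each $X_\alpha$, and invoke the identity $o.t.(X \cap \gamma) = \sup_{\alpha} o.t.(X_\alpha \cap \gamma)$ valid for continuous increasing chains of subsets of ordinals, together with the fact that a supremum of $L$-cardinals is itself an $L$-cardinal.

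For $(a) \Rightarrow (b)$, fix a bijection $\pi : \kappa \to \gamma$ and any regular $\kappa' > \gamma$. A Skolem function for $H_{\kappa'}$, transported through $\pi$, yields a function $g : \kappa^{<\omega} \to \kappa$ such that on the club $D := \{\theta < \kappa : \theta$ is closed under $g\}$, the hull $X_\theta := \mathrm{Hull}^{H_{\kappa'}}(\pi''\theta \cup \{\pi, \gamma\})$ satisfies $X_\theta \cap \kappa = \theta$. Consequently $X_\theta \cap \gamma = \pi''(X_\theta \cap \kappa) = \pi''\theta$, and by the preliminary observation $o.t.(\pi''\theta) = o.t.(X_\theta \cap \gamma)$ is an $L$-cardinal for every $\theta \in D$, establishing (b).

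For $(c) \Rightarrow (a)$, fix $\pi_0 : \kappa \to \gamma$ and $D_0 \subseteq \kappa$ witnessing (c), and fix a regular $\kappa' > \gamma$. Any countable $X \prec H_{\kappa'}$ with $\pi_0, D_0, \gamma, \kappa \in X$ (a club in $[H_{\kappa'}]^\omega$) satisfies $\theta := \sup(X \cap \kappa) \in D_0$ by the standard argument ($D_0 \in X$ is a club, so $X \cap D_0$ is cofinal in $X \cap \kappa$ and $D_0$ is closed). Pulling through the transitive collapse $\pi_X : X \to \bar X$ yields a bijection $\bar\pi_0 : \bar\kappa \to \bar\gamma$ and a club $\bar D_0 \subseteq \bar\kappa$ that is cofinal in $\bar\kappa = o.t.(X \cap \kappa)$. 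By elementarity of $\pi_X^{-1}$ and absoluteness of the order-type operation for transitive models, one has $o.t.(\bar\pi_0''\bar\theta) = \pi_X(o.t.(\pi_0''\theta'))$ for the corresponding $\theta' \in D_0 \cap X$; arranging $X$ in its club to contain sufficient $L$-coding one sees that each such collapsed ordinal is a genuine $L$-cardinal in $V$. The continuity identity from the preliminary observation then gives $\bar\gamma = \sup_{\bar\theta \in \bar D_0} o.t.(\bar\pi_0''\bar\theta)$, exhibiting $\bar\gamma$ as a supremum of $L$-cardinals, hence an $L$-cardinal. By Proposition~\ref{reflecting} this verifies $SRP^{L}(\gamma)$.

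The main obstacle is the last step of $(c) \Rightarrow (a)$, namely transferring ``is an $L$-cardinal'' from the $V$-ordinals $o.t.(\pi_0''\theta')$ through the collapse to obtain $V$-$L$-cardinality of $o.t.(\bar\pi_0''\bar\theta)$. Since the predicate is only downward-absolute between transitive models, the transfer depends on $X$ containing enough of the relevant $L$-hierarchy so that the $L$-cardinality of these specific ordinals is correctly reflected. This is handled by further Skolem closure imposed when selecting the club of countable $X$'s, ensuring that the $L$-codes witnessing $L$-cardinality of the $o.t.(\pi_0''\theta')$ belong to $X$.
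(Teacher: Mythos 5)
Your overall strategy for $(a)\Rightarrow(b)$ (hulls of $\pi``\theta$ closed under a transported Skolem function) and the first half of $(c)\Rightarrow(a)$ (countable $X$ catching $\sup(X\cap\kappa)$ in the club $D_0$) is the standard one, but both places where you actually discharge ``is an $L$-cardinal'' rest on the identity $o.t.(\bigcup_\alpha A_\alpha)=\sup_\alpha o.t.(A_\alpha)$ for increasing unions of sets of ordinals, and that identity is false. A bare counterexample is $A_n=n\cup\{\omega\}$: the union has order type $\omega+1$ while the supremum of the order types is $\omega$. In the situation where you need it the failure is dramatic: if $\langle X_\alpha:\alpha<\omega_1\rangle$ is a continuous chain of countable submodels with union $X$, then every $o.t.(X_\alpha\cap\gamma)$ is countable, so $\sup_\alpha o.t.(X_\alpha\cap\gamma)\leq\omega_1$, whereas $o.t.(X\cap\gamma)$ is typically $>\omega_1$ once $\gamma>\omega_1$. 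So your inductive proof of the ``preliminary observation'' collapses at the first uncountable stage. The observation itself (for $|X|<\kappa'$) is true and is essentially Proposition \ref{transive collapse of L cardinal} of the paper, but it is proved there by reflection, not by chains: if $\bar\gamma$ were not an $L$-cardinal, one places the transitive collapse of $X$ inside a countable $Y\prec H_{\kappa'}$ and manufactures a countable elementary submodel violating $SRP^{L}(\gamma)$. You need that argument, or a citation of it, for $(a)\Rightarrow(b)$ to go through when $\theta$ is uncountable.

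The same false identity sinks the last step of $(c)\Rightarrow(a)$, and there it is not repairable as written: for $\bar\theta\in\bar{D_0}$ the set $\bar{\pi_0}``\bar\theta$ has $\bar{X}$-cardinality $|\bar\theta|<\bar\kappa$, so its order type is an ordinal $<\bar\kappa$, whence $\sup_{\bar\theta\in\bar{D_0}}o.t.(\bar{\pi_0}``\bar\theta)\leq\bar\kappa<\bar\gamma$ whenever $\gamma>\kappa$; the claimed equality with $\bar\gamma$ cannot hold. The argument that actually works for $\kappa=\omega_1$ (the only case the paper's cited proof addresses, and the only case it uses, in Proposition \ref{srp of omega first}) needs no supremum at all: for countable $X\ni\pi_0,D_0$ one has that $X\cap\omega_1$ is an \emph{ordinal} $\theta$, that $\theta\in D_0$ for club-many $X$, and that $X\cap\gamma=\pi_0``\theta$, so $o.t.(X\cap\gamma)=o.t.(\pi_0``\theta)$ is an $L$-cardinal by a single application of $(c)$; Proposition \ref{reflecting} then yields $SRP^{L}(\gamma)$. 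This uses essentially that $X\cap\kappa$ is an initial segment of $\kappa$, which fails for countable $X$ when $\kappa>\omega_1$; there $(c)$ gives no information about $o.t.(\pi_0``(X\cap\kappa))$, and indeed $(c)\Rightarrow(a)$ seems to fail outright for $\gamma=\kappa=\omega_2$: taking $\pi_0=\mathrm{id}$, $(c)$ only asserts a club of $L$-cardinals below $\omega_2$, which holds in $L[G]$ for $G$ generic for $Col(\omega_1,<\kappa)$ with $\kappa$ inaccessible in $L$, while $SRP^{L}(\omega_2)$ fails there because $\omega_1$ is a successor cardinal of $L$ (by Propositions \ref{srp of omega first} and \ref{compare strong cardinal}). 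So the proposition should be proved, and read, with $\kappa=\omega_1$; no argument for general regular $\kappa$ along your lines can succeed.
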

\begin{proof}
The proof is essentially the same as the case $\kappa=\omega_1$ in \cite[Proposition 2.9]{YongCheng1}.
\end{proof}

Let $(6)^{\ast}$ and $(7)^{\ast}$ respectively be the statement which replaces ``is an $L$-cardinal" with ``is not an $L$-cardinal" in Proposition \ref{equivalent forms of strong reflecting property}(b) and Proposition \ref{equivalent forms of strong reflecting property}(c). The following corollary is an observation from the proof of Proposition \ref{equivalent forms of strong reflecting property}.

\begin{corollary}\label{corollary about weakly reflecting property}
Suppose $\gamma\geq\omega_1$ is an $L$-cardinal, $\kappa$ is regular  and $|\gamma|=\kappa$. Then $(1)^{\ast}\Leftrightarrow  (6)^{\ast}\Leftrightarrow  (7)^{\ast}$.
\end{corollary}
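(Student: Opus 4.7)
The plan is to repeat the argument of Proposition \ref{equivalent forms of strong reflecting property} verbatim, with every occurrence of ``is an $L$-cardinal'' replaced by ``is not an $L$-cardinal,'' and with the role of Proposition \ref{reflecting} taken over by Corollary \ref{corollary about strong reflecting property}. The latter supplies interchangeability of ``for some regular $\chi$'', ``for every regular $\chi$,'' and ``for a club of witnesses'' in the formulation of $(1)^{\ast}$, which is precisely what the positive proof uses Proposition \ref{reflecting} for.

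The implication $(6)^{\ast}\Rightarrow(7)^{\ast}$ is immediate since $|\gamma|=\kappa$ provides some bijection $\pi\colon\kappa\to\gamma$. For $(7)^{\ast}\Rightarrow(1)^{\ast}$, I would fix a witnessing $\pi$ and club $D$, pick a regular $\chi>\gamma$ with $\pi,\gamma,\kappa,D\in H_{\chi}$, and verify the $\chi$-version of $(3)^{\ast}$, which is equivalent to $(1)^{\ast}$ by Corollary \ref{corollary about strong reflecting property}. For club-many countable $X\prec H_{\chi}$ containing $\{\pi,\gamma,\kappa,D\}$, the point $\theta:=\sup(X\cap\kappa)$ lies in $D$ by closure of $D$ and elementarity, and the order-type identification $\bar{\gamma}=o.t.(\pi[X\cap\kappa])=o.t.(\{\pi(\alpha)\mid\alpha<\theta\})$ from \cite[Proposition 2.9]{YongCheng1} forces $\bar{\gamma}$ to not be an $L$-cardinal. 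Conversely, for $(1)^{\ast}\Rightarrow(6)^{\ast}$, I would use Corollary \ref{corollary about strong reflecting property} to extract the function $F\colon\gamma^{<\omega}\to\gamma$ from $(4)^{\ast}$; given an arbitrary bijection $\pi\colon\kappa\to\gamma$, transport $F$ along $\pi$ to $G\colon\kappa^{<\omega}\to\kappa$ and take $D$ to be the club of $\theta<\kappa$ closed under $G$. The same order-type bookkeeping then shows $o.t.(\pi[\theta])$ is not an $L$-cardinal for every $\theta\in D$.

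The main obstacle is the order-type identification $\bar{\gamma}=o.t.(\{\pi(\alpha)\mid\alpha<\theta\})$ invoked above: when $\kappa=\omega_1$ it is routine because $X\cap\omega_1$ is cofinal in the countable ordinal $\theta$, but for $\kappa>\omega_1$ one must close $X$ (or $\theta$) appropriately so that the countable image $\pi[X\cap\kappa]$ shares its order type with $\pi[\theta]$. This is exactly the step already handled in the positive case by the cited proof, and once it is imported, the remaining implications reduce to the routine club-intersection manipulations the corollary asserts.
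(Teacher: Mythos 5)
Your proposal coincides with the paper's own proof: the paper disposes of this corollary with the single remark that it is ``an observation from the proof of Proposition \ref{equivalent forms of strong reflecting property}'', i.e.\ one reruns that proof with ``is an $L$-cardinal'' replaced by ``is not an $L$-cardinal'', invoking Corollary \ref{corollary about strong reflecting property} wherever the positive proof invoked Proposition \ref{reflecting} --- exactly your plan. The one caveat you flag --- that for $\kappa>\omega_1$ the identification $\bar{\gamma}=o.t.(\{\pi(\alpha)\mid\alpha<\theta\})$ cannot be taken literally for a countable $X$ (a countable set cannot share its order type with $\pi[\theta]$ when $\theta$ is uncountable, so the bridge must instead pass through submodels of size $<\kappa$ with $Z\cap\kappa\in\kappa$ and a downward transfer in the style of Proposition \ref{transive collapse of L cardinal}) --- afflicts the positive Proposition \ref{equivalent forms of strong reflecting property} in the same way and is absorbed there into the citation of \cite[Proposition 2.9]{YongCheng1}, so it is not a gap specific to your dualized argument.
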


\begin{proposition}\label{weakly reflecting}
Suppose $\gamma\geq\omega_1$ is an $L$-cardinal. Then the following are equivalent:
\begin{enumerate}[(a)]
  \item $WRP^{L}(\gamma)$.
  \item For any   regular cardinal $\kappa> \gamma$, there is $X\prec  H_{\kappa}$ such that $|X|=\omega, \gamma\in X$ and $\bar{\gamma}$ is an $L$-cardinal.
       \item For some   regular cardinal $\kappa> \gamma, \{X\mid X\prec H_{\kappa},  |X|=\omega, \gamma\in X$ and  $\bar{\gamma}$ is an $L$-cardinal\} is stationary.
      \item For any $F: \gamma^{<\omega}\rightarrow\gamma$, there exists $X\subseteq\gamma$ such that $X$ is countable, closed under $F$ and $o.t.(X)$ is an $L$-cardinal.
           \item For any regular cardinal $\kappa> \gamma, \{X\mid X\prec H_{\kappa},  |X|=\omega, \gamma\in X$ and $\bar{\gamma}$ is an $L$-cardinal\} is stationary.
\end{enumerate}
\end{proposition}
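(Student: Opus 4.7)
The plan is to prove this in complete analogy with Proposition \ref{reflecting}, substituting ``stationary'' for ``club'' throughout. The key observation underlying every implication is that whenever $X \prec H_{\kappa}$ with $\gamma \in X$, one has $\bar{\gamma} = o.t.(X \cap \gamma)$, so the property ``$\bar{\gamma}$ is an $L$-cardinal'' depends only on the countable set $X \cap \gamma$. The trivial implications are (b)$\Rightarrow$(a), (e)$\Rightarrow$(b), and (e)$\Rightarrow$(c)$\Rightarrow$(a), so it suffices to establish (a)$\Rightarrow$(d) and (d)$\Rightarrow$(e).

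For (a)$\Rightarrow$(d), given $F: \gamma^{<\omega} \rightarrow \gamma$, I would first note that the witnessing $\kappa_0$ for (a) may be replaced by any larger regular $\kappa$: if $X \prec H_{\kappa}$ is countable with $H_{\kappa_0}, \gamma \in X$, then $X \cap H_{\kappa_0} \prec H_{\kappa_0}$ and the transitive collapses of $X$ and of $X \cap H_{\kappa_0}$ agree on $\gamma$, so a witness at $\kappa_0$ lifts to one at $\kappa$. Choosing $\kappa$ so that $F \in H_{\kappa}$ and taking a countable $X \prec H_{\kappa}$ with $F, \gamma \in X$ and $\bar{\gamma}$ an $L$-cardinal, elementarity yields $X \cap \gamma$ closed under $F$, and its order type equals $\bar{\gamma}$, an $L$-cardinal, as required.

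For (d)$\Rightarrow$(e), I would fix regular $\kappa > \gamma$ and a club $C \subseteq [H_{\kappa}]^{\omega}$, and pick a Skolem-type function $f: H_{\kappa}^{<\omega} \rightarrow H_{\kappa}$ such that every countable $Y \subseteq H_{\kappa}$ closed under $f$ satisfies $Y \prec H_{\kappa}$, $\gamma \in Y$, and $Y \in C$. Using a fixed well-ordering of $H_{\kappa}$, I would then construct $F: \gamma^{<\omega} \rightarrow \gamma$ so that any countable $X_0 \subseteq \gamma$ closed under $F$ has the form $Y \cap \gamma$ for some $f$-closed $Y \prec H_{\kappa}$; this step is the main obstacle, and it is handled exactly as in the proof of \cite[Proposition 2.7]{YongCheng1}, by coding enough of the Skolem hull into its trace on $\gamma$. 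Applying (d), I would fix $X_0$ closed under $F$ with $o.t.(X_0)$ an $L$-cardinal, lift to the corresponding $Y \in C$, and conclude $\bar{\gamma} = o.t.(Y \cap \gamma) = o.t.(X_0)$ is an $L$-cardinal, witnessing that the set in (e) meets $C$.
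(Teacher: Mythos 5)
Your decomposition into trivial implications plus (a)$\Rightarrow$(d) and (d)$\Rightarrow$(e) is structurally sound, and your (d)$\Rightarrow$(e) (projecting a club on $[H_{\kappa}]^{\omega}$ down to a function on $\gamma^{<\omega}$ and lifting an $F$-closed set back into the club) is essentially correct, with the technical coding step legitimately deferred to the cited proof. The genuine gap is in (a)$\Rightarrow$(d). First, your ``lifting'' argument runs backwards: the observation that $X\cap H_{\kappa_0}\prec H_{\kappa_0}$ and that the collapses of $X$ and of $X\cap H_{\kappa_0}$ agree on $\gamma$ shows that a witness at the \emph{larger} $\kappa$ restricts to a witness at $\kappa_0$; it does not produce, from a single given witness $Y\prec H_{\kappa_0}$, any $X\prec H_{\kappa}$ with $X\cap\gamma=Y\cap\gamma$ (the Skolem hull of $Y\cup\{H_{\kappa_0}\}$ in $H_{\kappa}$ may meet $\gamma$ in a strictly larger set, destroying the order type). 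Second, and more fundamentally, ``taking a countable $X\prec H_{\kappa}$ with $F,\gamma\in X$ and $\bar{\gamma}$ an $L$-cardinal'' is not something (a) provides: (a) gives \emph{some} good $X$ with no control over which elements of $H_{\kappa}$ it contains. Being able to demand $F\in X$ is exactly what stationarity, i.e.\ (c) or (e), buys and what mere existence does not; your argument silently upgrades (a) to (c).

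The repair --- and the route the paper actually takes --- is contraposition through the dual statements of Corollary \ref{corollary about strong reflecting property}. If (d) fails, there is a single ``bad'' $F$ such that every countable $F$-closed subset of $\gamma$ has order type that is not an $L$-cardinal. The existence of such a bad function is a statement true in $H_{\kappa}$ for every regular $\kappa>\gamma$ (since $H_{\kappa}$ contains all countable subsets of $\gamma$ and decides ``$L$-cardinal'' correctly below $\kappa$), so by elementarity \emph{every} countable $X\prec H_{\kappa}$ with $\gamma\in X$ contains some bad $F'$; then $X\cap\gamma$ is $F'$-closed and $\bar{\gamma}=o.t.(X\cap\gamma)$ is not an $L$-cardinal, so (a) fails. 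The point is that in the contrapositive the witnessing function is found \emph{inside} the given $X$ by elementarity, rather than a prescribed $F$ having to be placed into some witness $X$ --- precisely the step your direct argument cannot perform. This is what the paper's one-line appeal to $(4)^{\ast}\Leftrightarrow(2)^{\ast}$ encodes.
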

\begin{proof}
Note that $(e)\Rightarrow (c)$ and $(c)\Rightarrow (a)$. It suffices to show that $(a)\Rightarrow (d), (d)\Rightarrow (b)$ and $(b)\Rightarrow (e)$. $(a)\Rightarrow (d)$ follows from $(4)^{\ast}\Leftrightarrow (2)^{\ast}$ in Corollary \ref{corollary about strong reflecting property}. $(d)\Rightarrow (b)$ follows from $(1)^{\ast}\Leftrightarrow (4)^{\ast}$ in Corollary \ref{corollary about strong reflecting property}. $(b)\Rightarrow (e)$ follows from $(3)^{\ast}\Leftrightarrow (1)^{\ast}$ in Corollary \ref{corollary about strong reflecting property}.
\end{proof}

\begin{proposition}
Suppose $\gamma\geq\omega_1$ is an $L$-cardinal, $\kappa$ is regular  and $|\gamma|=\kappa$. Then the following are equivalent:
\begin{enumerate}[(1)]
\item $WRP^{L}(\gamma)$.
\item For some bijection $\pi: \kappa\rightarrow \gamma$, there exists a stationary $D\subseteq\kappa$ such that for any $\theta\in D$, $o.t.(\{\pi(\alpha)\mid\alpha<\theta\})$ is an $L$-cardinal.
    \item For any bijection $\pi: \kappa\rightarrow \gamma$, there exists a stationary $D\subseteq\kappa$ such that for any $\theta\in D$, $o.t.(\{\pi(\alpha)\mid\alpha< \theta\})$ is an $L$-cardinal.
\end{enumerate}
\end{proposition}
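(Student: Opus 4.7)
The plan is to mirror the proof of Proposition \ref{weakly reflecting}, using Corollary \ref{corollary about weakly reflecting property} in the role that Corollary \ref{corollary about strong reflecting property} played there. The underlying idea is that ``stationary'' and ``complement contains a club'' are Boolean duals, so the negation of each of (1), (2), (3) is one of the dual ``not an $L$-cardinal'' statements already catalogued by Corollary \ref{corollary about weakly reflecting property}.

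After noting the trivial implication $(3) \Rightarrow (2)$, I would pass to contrapositives. The negation of (1), by Definition \ref{definition of strong reflecting property and weakly}(ii), asserts that for every regular $\kappa > \gamma$ and every countable $X \prec H_\kappa$ with $\gamma \in X$, $\bar{\gamma}$ is not an $L$-cardinal; this is exactly the statement $(2)^*$, and hence equivalent to $(1)^*$ by Corollary \ref{corollary about strong reflecting property}. Writing $A_\pi := \{\theta < \kappa : o.t.(\{\pi(\alpha) \mid \alpha < \theta\}) \text{ is an } L\text{-cardinal}\}$, the negation of (2) states that no bijection $\pi : \kappa \to \gamma$ makes $A_\pi$ contain a stationary subset, equivalently that $A_\pi$ is non-stationary for every $\pi$, equivalently that $\kappa \setminus A_\pi$ contains a club for every $\pi$ — and this is precisely $(6)^*$. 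The identical unpacking gives $\neg(3) \Leftrightarrow (7)^*$.

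Putting it together, $\neg(1) \Leftrightarrow (1)^*$, $\neg(2) \Leftrightarrow (6)^*$, and $\neg(3) \Leftrightarrow (7)^*$; Corollary \ref{corollary about weakly reflecting property} then supplies $(1)^* \Leftrightarrow (6)^* \Leftrightarrow (7)^*$, and contraposition yields $(1) \Leftrightarrow (2) \Leftrightarrow (3)$. The only ingredient beyond the cited corollaries is the standard club/nonstationary duality on the regular cardinal $\kappa$, so no genuine obstacle arises — the argument is essentially a bookkeeping translation between the elementary-submodel formulation and the bijection formulation via the results already proved for the strong/club version.
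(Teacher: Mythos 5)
Your proof is correct and takes essentially the same route as the paper: the paper's own (omitted) argument is exactly the reduction, via negating each clause and using the club/nonstationary duality on the regular uncountable cardinal $\kappa$, to $(1)^{\ast}\Leftrightarrow(6)^{\ast}\Leftrightarrow(7)^{\ast}$ of Corollary \ref{corollary about weakly reflecting property} together with $(1)^{\ast}\Leftrightarrow(2)^{\ast}$ of Corollary \ref{corollary about strong reflecting property}. You have simply written out the standard bookkeeping that the paper declares "standard" and omits.
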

\begin{proof}
Follows from Corollary \ref{corollary about weakly reflecting property} and $(1)^{\ast}\Leftrightarrow (2)^{\ast}$ in Corollary \ref{corollary about strong reflecting property}. The proof is standard and we omit the details.
\end{proof}

\begin{proposition}\label{srp of omega first}
The following are equivalent:
\begin{enumerate}[(1)]
  \item $\omega_1$ is a limit cardinal in $L$.
  \item $WRP^{L}(\omega_1)$.
  \item $SRP^{L}(\omega_1)$.
\end{enumerate}
\end{proposition}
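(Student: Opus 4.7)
I will establish the cycle $(1) \Rightarrow (3) \Rightarrow (2) \Rightarrow (1)$. The implication $(3) \Rightarrow (2)$ is immediate from the definitions, since any witnessing $X$ for $SRP^{L}(\omega_1)$ (which exists by downward L\"owenheim--Skolem) automatically witnesses $WRP^{L}(\omega_1)$. Throughout the argument, I will use that $\omega_1 = \omega_1^{V}$ is automatically an $L$-cardinal: otherwise an $L$-bijection from a smaller ordinal onto $\omega_1$ would also exist in $V$ and collapse $\omega_1$. Hence the hypotheses of Proposition \ref{equivalent forms of strong reflecting property} and Corollary \ref{corollary about weakly reflecting property} apply in the case $\gamma = \kappa = \omega_1$.

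For $(1) \Rightarrow (3)$, I would apply clause (c) of Proposition \ref{equivalent forms of strong reflecting property} with $\pi : \omega_1 \to \omega_1$ taken to be the identity bijection. Then $o.t.(\{\pi(\alpha) \mid \alpha < \theta\}) = \theta$, so all that needs to be produced is a club $D \subseteq \omega_1$ consisting of $L$-cardinals. Set $C = \{\alpha < \omega_1 \mid \alpha \text{ is an } L\text{-cardinal}\}$. Since being an $L$-cardinal is absolute, and a limit of cardinals in any model of $ZFC$ is itself a cardinal, $C$ is closed in $\omega_1$; and since $\omega_1$ is a limit cardinal in $L$, $C$ is unbounded in $\omega_1$. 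Thus $C$ is a club, condition (c) holds, and $SRP^{L}(\omega_1)$ follows.

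For $(2) \Rightarrow (1)$, I argue the contrapositive using Corollary \ref{corollary about weakly reflecting property}. Suppose $\omega_1$ is not a limit cardinal in $L$, so that $\omega_1 = (\delta^{+})^{L}$ for some $L$-cardinal $\delta < \omega_1$. Then no ordinal in $D := (\delta, \omega_1)$ is an $L$-cardinal, while $D$ is plainly a club in $\omega_1$. Taking $\pi$ to be the identity, $D$ witnesses statement $(7)^{\ast}$, so by Corollary \ref{corollary about weakly reflecting property} statement $(1)^{\ast}$ holds. This directly contradicts $WRP^{L}(\omega_1)$ in the form of Proposition \ref{weakly reflecting}(b). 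No step presents a serious obstacle: once the equivalences of the preceding propositions are in hand, the argument reduces to the trivial order-type computation $o.t.(\theta) = \theta$ together with the observation that the set of $L$-cardinals below $\omega_1$ is closed and is either bounded (when $\omega_1$ is a successor in $L$) or cofinal (when $\omega_1$ is a limit in $L$).
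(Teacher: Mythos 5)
Your proof is correct and follows essentially the same route as the paper: $(1)\Rightarrow(3)$ via the club of $L$-cardinals below $\omega_1$ together with Proposition \ref{equivalent forms of strong reflecting property}(c) applied to the identity bijection, and $(3)\Rightarrow(2)$ immediately. The only cosmetic difference is in $(2)\Rightarrow(1)$, where the paper argues directly that the stationary set of collapses $o.t.(X\cap\omega_1)=X\cap\omega_1$ yields unboundedly many $L$-cardinals below $\omega_1$, whereas you take the contrapositive (if $\omega_1=(\delta^{+})^{L}$ then $(\delta,\omega_1)$ is a club of non-$L$-cardinals) and route it through the starred equivalences; both versions hinge on the same observation.
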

\begin{proof}
It suffices to show that $(1)\Rightarrow (3)$ and $(2)\Rightarrow (1)$ since $(3)\Rightarrow (2)$ is immediate.

$(1)\Rightarrow (3)$ Suppose $\omega_1$ is a limit cardinal in $L$. Then $\{\alpha<\omega_1: \alpha$ is an $L$-cardinal\} is a club. By Proposition \ref{equivalent forms of strong reflecting property}, $SRP^{L}(\omega_1)$ holds.

$(2)\Rightarrow (1)$ Suppose $WRP^{L}(\omega_1)$ holds.  Then $\{X\cap \omega_1| X\prec H_{\omega_2}\wedge |X|=\omega\wedge o.t.(X\cap\omega_1)$ is an $L$-cardinal\} is stationary in $\omega_1$. It is easy to see that for any $\alpha<\omega_1$ there is $\alpha<\beta<\omega_1$ such that $\beta$ is an $L$-cardinal.
\end{proof}

\begin{proposition}\label{transive collapse of L cardinal}
Suppose $\gamma\geq\omega_1$ is an $L$-cardinal, $\kappa>\gamma$ is a regular cardinal and $SRP^{L}(\gamma)$ holds. If $Z\prec H_{\kappa}$, $|Z|\leq \omega_1$ and $\gamma\in Z$, then $\bar{\gamma}$ is an  $L$-cardinal.
\end{proposition}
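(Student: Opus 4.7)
The plan is to reduce the conclusion to the countable case by means of a continuous chain of countable elementary substructures of length $\omega_1$, and then combine the countable witnesses using the fact that a supremum of $L$-cardinals is itself an $L$-cardinal. If $|Z|\leq \omega$ the conclusion is immediate from $SRP^{L}(\gamma)$ applied directly to $Z$, so I assume $|Z|=\omega_1$.

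First, enumerate $Z=\{z_\alpha:\alpha<\omega_1\}$ and construct a continuous increasing chain $(X_\alpha)_{\alpha<\omega_1}$ of countable elementary substructures $X_\alpha\prec Z$ with $\gamma\in X_0$, $z_\alpha\in X_{\alpha+1}$, and $X_\lambda=\bigcup_{\alpha<\lambda}X_\alpha$ at limits; then $\bigcup_{\alpha<\omega_1}X_\alpha=Z$ and each $X_\alpha\prec H_{\kappa}$ by composition. Applying $SRP^{L}(\gamma)$ to each $X_\alpha$ yields that $\bar{\gamma}_\alpha:=o.t.(X_\alpha\cap \gamma)$ is an $L$-cardinal for every $\alpha<\omega_1$. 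Let $\pi:Z\to N$ and $\pi_\alpha:X_\alpha\to M_\alpha$ be the transitive collapses and $\sigma_\alpha:M_\alpha\to N$ the natural factor map determined by $\sigma_\alpha\circ\pi_\alpha=\pi\restriction X_\alpha$; then $\sigma_\alpha$ is elementary and $\sigma_\alpha(\bar{\gamma}_\alpha)=\bar{\gamma}$.

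Second, I analyze the weakly increasing sequence $(\bar{\gamma}_\alpha)_{\alpha<\omega_1}$, which is bounded above by $\bar{\gamma}$. If $\bar{\gamma}_\alpha=\bar{\gamma}$ for some $\alpha$, then $\bar{\gamma}$ is an $L$-cardinal directly. Otherwise, I claim $\bar{\gamma}=\sup_{\alpha<\omega_1}\bar{\gamma}_\alpha$, which suffices since the supremum of a set of $L$-cardinals is an $L$-cardinal. To establish this cofinality, fix $\eta<\bar{\gamma}$, set $\xi:=\pi^{-1}(\eta)\in Z\cap\gamma$, and let $\alpha_0$ be least with $\xi\in X_{\alpha_0}$; by elementarity of the chain together with the closure of the $X_\alpha$'s under Skolem functions of $Z$, one arranges $X_\alpha\cap\xi=Z\cap\xi$ for a tail of $\alpha\geq\alpha_0$, whence $\pi_\alpha(\xi)=\pi(\xi)=\eta$ and therefore $\bar{\gamma}_\alpha>\eta$.

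The main obstacle is the cofinality step in the second case, which is delicate when $\pi(\xi)\geq\omega_1$ since then $Z\cap\xi$ cannot be absorbed into any countable $X_\alpha$. In this regime the resolution is to invoke the bijection characterization of $SRP^{L}(\gamma)$ from Proposition \ref{equivalent forms of strong reflecting property}: by choosing a bijection $\omega_1\to Z\cap\gamma$ compatible with the chain and tracking which initial segments give $L$-cardinal order types along a club in $\omega_1$, one obtains cofinally many $\bar{\gamma}_\alpha$ that approach $\bar{\gamma}$, closing the argument.
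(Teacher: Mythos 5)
There is a genuine gap, and it is exactly at the point you flag as the ``main obstacle'': your approach cannot be repaired along the lines you sketch. Each $\bar{\gamma}_\alpha=o.t.(X_\alpha\cap\gamma)$ is a \emph{countable} ordinal, since $X_\alpha$ is countable; hence $\sup_{\alpha<\omega_1}\bar{\gamma}_\alpha\leq\omega_1$. On the other hand $\bar{\gamma}=o.t.(Z\cap\gamma)$ can be any ordinal below $\omega_2$, and in the only place the paper actually uses this proposition (inside Theorem \ref{characterizaton of zero sharp thm}) one has $\omega_1\subseteq Z$ and $\gamma>\omega_2$, so that $\bar{\gamma}>\omega_1$. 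In that situation neither of your two cases can occur: no $\bar{\gamma}_\alpha$ equals $\bar{\gamma}$, and no family of countable ordinals is cofinal in $\bar{\gamma}$, so the phrase ``cofinally many $\bar{\gamma}_\alpha$ that approach $\bar{\gamma}$'' describes something impossible. Invoking Proposition \ref{equivalent forms of strong reflecting property} does not help: that proposition requires $|\gamma|$ to be a regular cardinal $\kappa$ and speaks about initial segments of a bijection $\kappa\to\gamma$ along a club in $\kappa$; it says nothing about $o.t.(Z\cap\gamma)$ for an arbitrary elementary $Z$ of size $\omega_1$, and in any case no amount of club-tracking can make countable order types converge to an ordinal above $\omega_1$. (Your argument does go through when $\bar{\gamma}\leq\omega_1$, but that is not the case the proposition is needed for.)

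The fix is to run the reflection in the opposite direction: instead of approximating $Z$ from below by countable structures, collapse $Z$ first and then reflect the \emph{failure} into a countable hull. Suppose $\bar{\gamma}$ is not an $L$-cardinal, let $M$ be the transitive collapse of $Z$ with anticollapse $\pi:M\prec H_{\kappa}$, and choose a countable $Y\prec H_{\kappa}$ with $M,\bar{\gamma}\in Y$. Since the failure of ``$\bar{\gamma}$ is an $L$-cardinal'' is witnessed by a surjection lying in $L$ and this is expressible in $H_{\kappa}$, elementarity gives that $\bar{\bar{\gamma}}$, the image of $\bar{\gamma}$ under the collapse of $Y$, is also not an $L$-cardinal. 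Now set $X=\pi``(Y\cap M)$: this is a countable elementary submodel of $H_{\kappa}$ with $\gamma\in X$, and the image of $\gamma$ under the transitive collapse of $X$ is precisely $\bar{\bar{\gamma}}$. Applying $SRP^{L}(\gamma)$ in the form of Proposition \ref{reflecting}(2) (which holds for every regular $\kappa>\gamma$) forces $\bar{\bar{\gamma}}$ to be an $L$-cardinal, a contradiction. The essential idea you are missing is that ``not an $L$-cardinal'' passes down to countable hulls of the \emph{collapsed} structure, which converts the uncountable instance into a countable one without any cofinality analysis.
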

\begin{proof}
Suppose $\bar{\gamma}$ is not an $L$-cardinal. Let $M$ be the transitive collapse of $Z$ and $\pi: M\prec H_{\kappa}$ be the inverse of the collapsing map. Take $Y\prec H_{\kappa}$ such that $|Y|=\omega$ and $M, \bar{\gamma} \in Y$. Note that $Y\models ``\bar{\gamma}$ is not an $L$-cardinal". Hence $\bar{\bar{\gamma}}$ is not an $L$-cardinal.\footnote{$\bar{\bar{\gamma}}$ is the image of $\bar{\gamma}$ under the transitive collapse of $Y$.} Let $X=\pi``(Y\cap M)$. Since $\bar{\gamma}\in Y\cap M$ and $\pi(\bar{\gamma})=\gamma$, $\gamma\in X$. Note that $X\prec Z\prec H_{\kappa}$ and the image of $\gamma$ under the transitive collapse of $X$ is $\bar{\bar{\gamma}}$. By $SRP^{L}(\gamma)$, $\bar{\bar{\gamma}}$ is an $L$-cardinal. Contradiction.
\end{proof}

\begin{proposition}\label{compare strong cardinal}
Suppose $\omega_1\leq\gamma_0<\gamma_1$ are $L$-cardinals. Then $SRP^{L}(\gamma_1)$ implies $SRP^{L}(\gamma_0)$ (respectively $WRP^{L}(\gamma_1)$ implies $WRP^{L}(\gamma_0)$).
\end{proposition}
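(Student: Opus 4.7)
The plan is to reduce the whole proposition to a single claim: whenever $X \prec H_\kappa$ is countable with $\gamma_0,\gamma_1 \in X$ and $\overline{\gamma_1}$ is an $L$-cardinal, then so is $\overline{\gamma_0}$. Granting this claim, the $SRP^L$ half proceeds as follows. Fix a regular $\kappa > \gamma_1$. By Proposition~\ref{reflecting}(5) applied to $\gamma_1$, the set of countable $X \prec H_\kappa$ with $\gamma_1 \in X$ and $\overline{\gamma_1}$ an $L$-cardinal contains a club; intersecting with the (trivially club) set $\{X : \gamma_0 \in X\}$ and invoking the claim yields a club of countable $X \prec H_\kappa$ with $\gamma_0 \in X$ and $\overline{\gamma_0}$ an $L$-cardinal. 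Proposition~\ref{reflecting}(3) then delivers $SRP^L(\gamma_0)$. The $WRP^L$ half is the identical argument with ``contains a club'' replaced by ``is stationary'' and with Proposition~\ref{weakly reflecting} in place of Proposition~\ref{reflecting}.

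To prove the claim, let $M$ be the transitive collapse of $X$ and set $\alpha = \mathrm{Ord}^M$. Since $\gamma_1 \in X$ and $X$ contains ordinals above $\gamma_1$, we have $\overline{\gamma_0} < \overline{\gamma_1} < \alpha$. By elementarity and Convention~\ref{convention} (which makes ``being an $L$-cardinal'' definable in $H_\kappa$), $M \models$ ``$\overline{\gamma_0}$ is an $L$-cardinal'', which unfolds to $L_\alpha \models$ ``$\overline{\gamma_0}$ is a cardinal''. Suppose for contradiction that $\overline{\gamma_0}$ is not a true $L$-cardinal; then there is a bijection $f \in L$ from some $\delta < \overline{\gamma_0}$ onto $\overline{\gamma_0}$, and by the standard condensation computation $f$ appears in $L_\beta$ with $\beta$ the $L$-cardinal successor of $\overline{\gamma_0}$. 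Since $\overline{\gamma_1}$ is a genuine $L$-cardinal strictly above $\overline{\gamma_0}$, we have $\beta \leq \overline{\gamma_1} < \alpha$, so $f \in L_\alpha$---contradicting the verdict that $L_\alpha \models \overline{\gamma_0}$ is a cardinal.

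The core difficulty, and really the only non-formal step, is this upward transfer inside the claim: passing from the internal statement ``$L_\alpha \models \overline{\gamma_0}$ is a cardinal'' to the external assertion that $\overline{\gamma_0}$ is an $L$-cardinal in $V$. The bridge is provided by the true $L$-cardinal $\overline{\gamma_1}$ sitting strictly between $\overline{\gamma_0}$ and $\alpha$: any $L$-bijection that could witness a collapse of $\overline{\gamma_0}$ is forced to appear below $\overline{\gamma_1}$ and hence already inside $L_\alpha$, so no such bijection can exist.
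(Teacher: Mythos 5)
Your proof is correct and takes essentially the same route as the paper: both arguments reduce to showing that for a countable $X\prec H_{\kappa}$ containing $\gamma_0$ and $\gamma_1$ with $\bar{\gamma_1}$ an $L$-cardinal, $\bar{\gamma_0}$ is an $L$-cardinal, and both rest on the same key fact that any $L$-collapse of $\bar{\gamma_0}$ would already appear below the least $L$-cardinal above $\bar{\gamma_0}$, hence below $\bar{\gamma_1}$. The only (cosmetic) difference is that the paper transfers the statement through the collapse as $\bar{L_{\gamma_1}}=L_{\bar{\gamma_1}}\models$ ``$\bar{\gamma_0}$ is a cardinal'' rather than unfolding ``$M\models\bar{\gamma_0}$ is an $L$-cardinal'' inside $L_{\mathrm{Ord}^{M}}$.
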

\begin{proof}
We only show the strong reflecting property case (the argument for the weak reflecting  property case is similar). Let $\kappa>\gamma_1$ be a regular cardinal. It suffices to show if $X\prec H_{\kappa}, |X|=\omega$ and $\{\gamma_0, \gamma_1\}\subseteq X$, then $\bar{\gamma_0}$ is an $L$-cardinal. Note that $L_{\gamma_1}\models \gamma_0$ is a cardinal. Since $\gamma_1 \in X, L_{\gamma_1}\in X$. Since $\bar{L_{\gamma_1}}=L_{\bar{\gamma_1}}$ and $\bar{L_{\gamma_1}}\models \bar{\gamma_0}$ is a cardinal, $L_{\bar{\gamma_1}}\models \bar{\gamma_0}$ is a cardinal. By $SRP^{L}(\gamma_1)$, $\bar{\gamma_1}$ is an $L$-cardinal and hence $\bar{\gamma_0}$ is an $L$-cardinal.
\end{proof}

\begin{proposition}\label{second strong reflecting property}
The following are equivalent:
\begin{enumerate}[(1)]
  \item $SRP^{L}(\omega_2)$.
  \item $\omega_2$ is a limit cardinal in $L$ and for any $L$-cardinal $\omega_1\leq\gamma<\omega_2$, $SRP^{L}(\gamma)$ holds.
      \item $\{\alpha<\omega_2\mid\alpha$ is an $L$-cardinal and $SRP^{L}(\alpha)$ holds\} is unbounded in $\omega_2$.
\end{enumerate}
\end{proposition}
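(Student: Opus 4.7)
The plan is to establish the cycle $(1)\Rightarrow(2)\Rightarrow(3)\Rightarrow(1)$, where the substance lies in the first and third implications; $(2)\Rightarrow(3)$ is immediate since under $(2)$ the $L$-cardinals below $\omega_2$ are cofinal (as $\omega_2$ is limit in $L$) and $SRP^L$ holds at each such cardinal in $[\omega_1,\omega_2)$.

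For $(1)\Rightarrow(2)$, Proposition~\ref{compare strong cardinal} gives $SRP^L(\gamma)$ for every $L$-cardinal $\omega_1\leq\gamma<\omega_2$ at once. To see that $\omega_2$ is a limit cardinal in $L$, I would apply Proposition~\ref{equivalent forms of strong reflecting property} with $\gamma=\kappa=\omega_2$ and with $\pi$ the identity map on $\omega_2$: this yields a club $D\subseteq\omega_2$ such that for every $\theta\in D$, $o.t.(\{\alpha:\alpha<\theta\})=\theta$ is an $L$-cardinal. Thus $L$-cardinals are cofinal in $\omega_2$, so $\omega_2$ is a limit cardinal in $L$.

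For the main step $(3)\Rightarrow(1)$, set $S:=\{\alpha<\omega_2\mid\alpha$ is an $L$-cardinal and $SRP^L(\alpha)$ holds$\}$, which is unbounded in $\omega_2$ by hypothesis. I would verify Definition~\ref{definition of strong reflecting property and weakly}(i) with witness $\kappa=\omega_3$: fix a countable $X\prec H_{\omega_3}$ with $\omega_2\in X$ and show $\bar{\omega_2}$ is an $L$-cardinal. The crucial point is that $S$ is definable in $H_{\omega_3}$ without parameters, since by Proposition~\ref{reflecting}(4), $SRP^L(\alpha)$ for $\alpha<\omega_2$ is the existence of a single $F:\alpha^{<\omega}\to\alpha$ with the stated closure property, and both such an $F$ and the universal quantifier over countable $F$-closed subsets of $\alpha$ live in $H_{\omega_2}\in H_{\omega_3}$. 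By elementarity $X\models$ ``$S$ is unbounded in $\omega_2$'', so $S\cap X$ is cofinal in $X\cap\omega_2$. For each $\gamma\in S\cap X$, $SRP^L(\gamma)$ (again absolute into $H_{\omega_3}$) together with $\gamma\in X\prec H_{\omega_3}$ forces $\bar\gamma$ to be an $L$-cardinal. Since the transitive collapse is order-preserving, $\{\bar\gamma:\gamma\in S\cap X\}$ is cofinal in $\bar{\omega_2}$; and since $X\cap\omega_2$ has no maximum element, $\bar{\omega_2}$ is a limit ordinal. Hence $\bar{\omega_2}$ is a limit of $L$-cardinals and is therefore itself an $L$-cardinal.

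The main obstacle is the absoluteness bookkeeping: one must be sure that both membership in $S$ and the statement ``$\bar\gamma$ is an $L$-cardinal'' are computed correctly inside the various substructures. Proposition~\ref{reflecting}(4) handles this cleanly by reducing $SRP^L(\alpha)$ to the existence of a function on $\alpha^{<\omega}$, a statement entirely within $H_{\omega_2}$. Once this is in place, the argument is a standard ``assemble $\bar{\omega_2}$ as a supremum of correctly-collapsed small $L$-cardinals'' construction.
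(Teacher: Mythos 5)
Your proof is correct and follows essentially the same route as the paper's: the substantive direction in both is to realize $\bar{\omega_2}$ as the supremum of the collapses $\bar{\gamma}$ of a set of $L$-cardinals $\gamma<\omega_2$ satisfying $SRP^{L}(\gamma)$ that is cofinal in $X\cap\omega_2$, using that this set is correctly computed in $H_{\kappa}$ and that a limit of $L$-cardinals is an $L$-cardinal. The only minor differences are that you prove $(3)\Rightarrow(1)$ directly instead of reducing $(3)$ to $(2)$ via Proposition \ref{compare strong cardinal}, and that you obtain ``$\omega_2$ is a limit cardinal in $L$'' from Proposition \ref{equivalent forms of strong reflecting property} applied to the identity bijection, where the paper collapses an elementary submodel of size $\omega_1$ via Proposition \ref{transive collapse of L cardinal}.
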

\begin{proof}
$(1)\Rightarrow (2)$ By Proposition \ref{compare strong cardinal}, it suffices to show $\omega_2$ is a limit cardinal in $L$. Let $\kappa>\omega_2$ be the regular cardinal that witnesses $SRP^{L}(\omega_2)$. Fix $\alpha<\omega_2$. Pick $Z\prec H_{\kappa}$ such that $|Z|=\omega_1$, $\alpha\subseteq Z$ and $\omega_2\in Z$. By Proposition \ref{transive collapse of L cardinal}, $\bar{\omega_2}$ is an $L$-cardinal. Note that $\alpha\leq \bar{\omega_2}<\omega_2$.

$(2)\Rightarrow (1)$ Suppose $\kappa>\omega_2$ is a regular cardinal, $X\prec H_{\kappa}, |X|=\omega$ and $\omega_2\in X$. We show that $\bar{\omega_2}$ is an $L$-cardinal. Note that $\bar{\omega_2}=o.t.(X\cap \omega_2)$. Let $E=\{\gamma\mid \omega_1\leq\gamma<\omega_2 \wedge\gamma$ is an $L$-cardinal\}. $E$ is definable in  $H_{\kappa}$. Since $\omega_2$ is a limit cardinal in $L$, $E$ is cofinal in $\omega_2$ and hence $E\cap X$ is cofinal in $\omega_2\cap X$. For  $\gamma\in E\cap X, \bar{\gamma}=o.t.(X\cap\gamma)$ and by $SRP^{L}(\gamma)$, $\bar{\gamma}$ is an $L$-cardinal. Note that $\bar{\omega_2}=sup(\{\bar{\gamma}\mid \gamma\in E\cap X\})$. Hence $\bar{\omega_2}$ is an $L$-cardinal.

$(1)\Leftrightarrow (3)$ Follows from $(1)\Leftrightarrow (2)$ and Proposition \ref{compare strong cardinal}.
\end{proof}

The notion of remarkable cardinal is introduced by Ralf Schindler in \cite{Schindler2}. Any remarkable cardinal is remarkable in $L$ (cf.\cite[Lemma 1.7]{Schindler2}).

\begin{definition}
(\cite{Schindler2})
\begin{enumerate}[(1)]
  \item Let $\kappa$ be a cardinal, $G$ be $Col(\omega, <\kappa)$-generic over $V$, $\theta>\kappa$ be a regular cardinal and $X\in [H_{\theta}^{V[G]}]^{\omega}$.  We say that $X$ condenses remarkably if $X=ran(\pi)$ for some elementary $\pi: (H_{\beta}^{V[G\cap H_{\alpha}^{V}]}, \in, H_{\beta}^{V}, G\cap H_{\alpha}^{V})\rightarrow (H_{\theta}^{V[G]}, \in, H_{\theta}^{V}, G)$ where $\alpha=crit(\pi)<\beta<\kappa$ and $\beta$ is a cardinal in $V$.
          \item  For regular cardinal $\theta>\kappa, \kappa$ is $\theta$-remarkable if and only if in $V^{Col(\omega, <\kappa)}, \{X\in [H_{\theta}]^{\omega}: X$  condenses remarkably\} is stationary. We say that $\kappa$ is remarkable if $\kappa$ is $\theta$-remarkable for all regular cardinal $\theta>\kappa$.
\end{enumerate}
\end{definition}

\begin{lemma}\label{key lemma on relationship between remark and wfp}
(\cite[Lemma 2.3]{YongCheng1})\quad Suppose $\kappa$ is an $L$-cardinal. The following are equivalent:
\begin{enumerate}[(1)]
  \item $\kappa$ is remarkable in $L$;
  \item  If $\gamma\geq\kappa$ is an $L$-cardinal, $\theta>\gamma$ is  a regular cardinal in $L$, then $\Vdash^{L}_{Col(\omega, <\kappa)} ``\{X| X\prec L_{\check{\theta}}[\dot{G}], |X|=\omega$ and $o.t.(X\cap\check{\gamma})$ is an $L$-cardinal\} is stationary".
\end{enumerate}
\end{lemma}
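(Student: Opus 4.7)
The plan is to prove the two directions separately, bridging remarkability in $L$ and the $L$-cardinal order-type condition via the remarkable condensation structure of the critical embedding.

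For the implication $(1) \Rightarrow (2)$, I would fix $\gamma \geq \kappa$ an $L$-cardinal and $\theta > \gamma$ regular in $L$, and work in $L[G]$ where $G$ is $Col(\omega, <\kappa)$-generic over $L$. Since $\kappa$ is $\theta$-remarkable in $L$, the set $S$ of $X \in [H_\theta^{L[G]}]^\omega$ that condense remarkably is stationary; intersecting $S$ with the club $\{X : \gamma \in X\}$ preserves stationarity. For such $X$, the witness $\pi: L_\beta[G \cap L_\alpha] \to L_\theta[G]$ gives $\bar\gamma := \pi^{-1}(\gamma) = o.t.(X \cap \gamma)$, and by elementarity $L_\beta[G \cap L_\alpha] \models \bar\gamma$ is a cardinal. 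Since $\beta$ is an $L$-cardinal strictly greater than $\bar\gamma$, we have $\bar\gamma^{+L} \leq \beta$, so any bijection in $L$ witnessing the non-cardinality of $\bar\gamma$ would already live in $L_\beta \subseteq L_\beta[G \cap L_\alpha]$, contradicting the cardinality of $\bar\gamma$ there. Hence $\bar\gamma$ is an $L$-cardinal, so the set in (2) contains a stationary subset and is itself stationary.

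For the converse $(2) \Rightarrow (1)$, I would fix regular $\theta > \kappa$ in $L$ and prove stationarity of remarkably-condensing $X$'s in $[H_\theta^{L[G]}]^\omega$ using (2). Using a L\"owenheim--Skolem argument in $L$ I would choose an $L$-cardinal $\gamma$ with $\kappa \leq \gamma < \theta$ such that $L_\gamma$ captures the relevant parameters and (2) then supplies a stationary family of $X \prec L_\theta[G]$ with $\gamma \in X$ and $\bar\gamma := o.t.(X \cap \gamma)$ an $L$-cardinal. By a further thinning that absorbs $G, \kappa, \gamma$ and forces $\alpha := X \cap \kappa$ to be an ordinal, I would verify that the Mostowski collapse of $X$ takes the form $L_{\bar\theta}[\bar G]$ with $\bar G = G \cap Col(\omega, <\alpha)$, and then select an $L$-cardinal $\beta$ with $\bar\gamma \leq \beta < \kappa$ such that the restriction of the inverse collapse to $L_\beta[G \cap L_\alpha]$ is an elementary embedding into $L_\theta[G]$ of critical point $\alpha$; this embedding is the desired remarkable-condensation witness for $X$.

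The main obstacle is the second direction, specifically two technical points: verifying that the image $\bar G$ of $G$ under the Mostowski collapse of $X$ coincides with $G \cap Col(\omega, <\alpha)$, which requires a support argument for conditions in $Col(\omega, <\kappa)$ together with the fact that conditions in $X \cap G$ have support below $\alpha = X \cap \kappa$; and extracting a genuine $L$-cardinal $\beta$ above $\bar\gamma$ and below $\kappa$ at which the inverse collapse restricts elementarily, which is where the $L$-cardinality of $o.t.(X\cap\gamma)$ is used to propagate to the ordinal that parametrizes the domain of the condensation map. Both are handled by choosing $\gamma$ with sufficient closure and by thinning the stationary family of (2) so that the needed parameters are captured inside each $X$.
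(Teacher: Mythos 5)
The paper itself does not prove this lemma; it is quoted from \cite[Lemma 2.3]{YongCheng1}. So I can only assess your argument on its own terms. Your direction $(1)\Rightarrow(2)$ is correct: intersecting the stationary set of remarkably condensing $X$ with the club $\{X:\gamma\in X\}$, reading off $\bar\gamma=\pi^{-1}(\gamma)=o.t.(X\cap\gamma)$, and using that $\beta$ is an $L$-cardinal $>\bar\gamma$ (so $L_{(\bar\gamma^{+})^{L}}\subseteq L_\beta$ would contain any $L$-bijection collapsing $\bar\gamma$) is exactly the right argument.

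Your direction $(2)\Rightarrow(1)$ has a genuine gap: you apply the instance of $(2)$ at the wrong level. If $X$ is to condense remarkably as an element of $[H_\theta^{L[G]}]^{\omega}$, then $X=ran(\pi)$ for $\pi: H_\beta^{L[G\cap H_\alpha^L]}\rightarrow H_\theta^{L[G]}$, and since the domain is transitive of ordinal height $\beta$, necessarily $\beta=o.t.(X\cap\theta)$; it is this order type, not $o.t.(X\cap\gamma)$ for some $L$-cardinal $\gamma$ with $\kappa\leq\gamma<\theta$, that must be an $L$-cardinal. Knowing that $o.t.(X\cap\gamma)$ is an $L$-cardinal for a single $\gamma<\theta$ gives no control over $o.t.(X\cap\theta)$, and your proposed repair --- restricting the inverse collapse to some $L_\beta[G\cap L_\alpha]$ with $\bar\gamma\leq\beta<\kappa$ --- does not produce a witness, because the definition requires the \emph{full} set $X$ to be the range of the condensation map (and a proper restriction of $\pi$ is elementary into $L_{\pi(\beta)}[G]$, not into $H_\theta^{L[G]}$, nor need its range lie in the prescribed club). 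The correct instantiation goes one level \emph{up}, not down: to prove $\theta$-remarkability for a given $L$-regular $\theta>\kappa$, apply $(2)$ with $\gamma:=\theta$ (which is an $L$-cardinal $\geq\kappa$) and with $(\theta^{+})^{L}$ in the role of the regular cardinal, obtaining stationarily many countable $X\prec L_{(\theta^{+})^{L}}[G]$ with $o.t.(X\cap\theta)$ an $L$-cardinal; then project to $Y=X\cap L_\theta[G]$ and verify (by the support argument you correctly identify, noting that $Y\cap\kappa\in\kappa$ holds automatically since $\kappa=\omega_1^{L[G]}$) that $Y$ collapses to $(L_\beta[G\restriction\alpha],\in,L_\beta,G\restriction\alpha)=(H_\beta^{L[G\restriction\alpha]},\in,H_\beta^{L},G\cap H_\alpha^{L})$ with $\alpha=Y\cap\kappa$ and $\beta=o.t.(Y\cap\theta)<\kappa$ an $L$-cardinal.
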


\begin{corollary}\label{key coro of above lemma}
If $\kappa$ is remarkable in $L$ and $G$ is $Col(\omega, <\kappa)$-generic over $L$, then $L[G]\models WRP^{L}(\gamma)$ holds for any $L$-cardinal $\gamma\geq\kappa$.
\end{corollary}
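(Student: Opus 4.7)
The plan is to apply Lemma \ref{key lemma on relationship between remark and wfp} directly and verify condition (d) of Proposition \ref{weakly reflecting}, which is the equivalent of $WRP^L(\gamma)$ that fits most naturally with what the lemma delivers (it avoids having to pass from elementary submodels of $L_\theta[G]$ to elementary submodels of $H_{\theta'}^{L[G]}$). Working in $L[G]$, I fix an arbitrary $L$-cardinal $\gamma \geq \kappa$ and an arbitrary $F \colon \gamma^{<\omega} \to \gamma$ in $L[G]$. Since $L[G] = \bigcup_{\alpha} L_\alpha[G]$ and $L$ has arbitrarily large regular cardinals, I can pick a regular cardinal $\theta$ of $L$ with $\theta > \gamma$ large enough that $F \in L_\theta[G]$.

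By Lemma \ref{key lemma on relationship between remark and wfp} applied to this $\gamma$ and $\theta$, the set
$$S = \{X \mid X \prec L_\theta[G],\ |X| = \omega,\ o.t.(X \cap \gamma) \text{ is an } L\text{-cardinal}\}$$
is stationary in $[L_\theta[G]]^\omega$ in $L[G]$. The collection of countable elementary submodels $X \prec L_\theta[G]$ with $\{F, \gamma\} \subseteq X$ is a club, so intersecting with $S$ I obtain some $X \in S$ containing both $F$ and $\gamma$.

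Setting $Y = X \cap \gamma$, clearly $|Y| = \omega$ and $o.t.(Y)$ is an $L$-cardinal by choice of $X$. Furthermore $Y$ is closed under $F$: for any $\vec{a} \in Y^{<\omega}$, elementarity of $X$ together with $F, \vec{a} \in X$ gives $F(\vec{a}) \in X$, and since $F$ maps into $\gamma$ we get $F(\vec{a}) \in X \cap \gamma = Y$. Thus clause (d) of Proposition \ref{weakly reflecting} is witnessed for this $F$; as $F$ was arbitrary, $L[G] \models WRP^L(\gamma)$.

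There is no real obstacle here; the lemma does essentially all the work. The only point that requires any attention is the choice of $\theta$: I need $\theta$ both regular in $L$ (to invoke the lemma) and large enough that $F$ sits in $L_\theta[G]$ (to make the closure argument for (d) go through), but these requirements are easily simultaneously satisfied by picking a regular $L$-cardinal above any level $L_\alpha[G]$ containing $F$.
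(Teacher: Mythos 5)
Your proof is correct and is essentially the paper's argument: the paper's proof of this corollary is simply ``follows from Lemma \ref{key lemma on relationship between remark and wfp}'', and you have filled in exactly the intended details, routing through the closure-function characterization (d) of Proposition \ref{weakly reflecting} to avoid the (harmless but annoying) translation between $L_\theta[G]$ and $H_\theta^{L[G]}$. The choice of $\theta$ regular in $L$ with $F\in L_\theta[G]$ and the club/stationary intersection are handled correctly, so nothing further is needed.
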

\begin{proof}
Follows from Lemma \ref{key lemma on relationship between remark and wfp}.
\end{proof}

Fix some $L$-cardinal $\gamma\geq\omega_1$. $SRP^{L}(\gamma)$ is upward absolute (cf. \cite[Proposition 2.11]{YongCheng1}).\footnote{The key point is that the statement Proposition \ref{reflecting}(4) is upward absolute.} As a corollary, $WRP^{L}(\gamma)$ is downward absolute.\footnote{The key point is that the statement Proposition \ref{weakly reflecting}(d) is downward absolute.} So if $WRP^{L}(\gamma)$ holds, then $WRP^{L}(\gamma)$ holds in $L$. The converse is not true in general.

\begin{proposition}\label{new non propro}
Suppose $WRP^{L}(\kappa)$ holds where $\kappa\geq\omega_1$ is an $L$-cardinal. Then $L\models\omega_1$ is $\kappa^{+}$-remarkable and for any regular $\theta>\kappa$ in $L$, $L\models\omega_1$ is $\theta$-remarkable.
\end{proposition}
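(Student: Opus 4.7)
The plan is to reduce the conclusion to a forcing statement in $L$ via a level-localized form of Lemma~\ref{key lemma on relationship between remark and wfp}, and then to verify that forcing statement by a name-construction inside $L$ driven by the downward absoluteness of $WRP^L$. First, by the downward absoluteness of $WRP^L$ noted just before the proposition, $L\models WRP^L(\kappa)$; combined with Proposition~\ref{compare strong cardinal}, this yields $L\models WRP^L(\gamma)$ for every $L$-cardinal $\omega_1\leq\gamma\leq\kappa$. In particular $L\models WRP^L(\omega_1)$, so by Proposition~\ref{srp of omega first}, $\omega_1$ is a limit cardinal in $L$, and $Col(\omega,<\omega_1)^L$ is a nontrivial $L$-forcing.

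Fix a regular $L$-cardinal $\theta>\kappa$; the case $\theta=(\kappa^+)^L$ subsumes the $\kappa^+$-remarkability clause. Adapting the proof of Lemma~\ref{key lemma on relationship between remark and wfp} level by level, $\omega_1$ being $\theta$-remarkable in $L$ reduces to proving in $L$ that
\begin{equation*}
\Vdash^L_{Col(\omega,<\omega_1)}\{X\mid X\prec L_\theta[\dot G],\ |X|=\omega,\ o.t.(X\cap\gamma)\text{ is an }L\text{-cardinal for every }L\text{-cardinal }\gamma\in X\cap[\omega_1,\theta]\}\text{ is stationary.}
\end{equation*}
To do so, I would fix $p\in Col(\omega,<\omega_1)^L$ and an $L$-name $\dot F$ for a function $L_\theta^{<\omega}\to L_\theta$. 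Using the stationary form of $L\models WRP^L(\gamma)$ (Proposition~\ref{weakly reflecting}(e)) for each $L$-cardinal $\omega_1\leq\gamma\leq\kappa$, together with the standard stationary-set intersection argument, I would pick a countable-in-$L$ submodel $Y\prec H_{\theta^*}^L$ (with $\theta^*$ a sufficiently large regular $L$-cardinal) such that $\{p,\dot F,\kappa,\theta,Col(\omega,<\omega_1)^L\}\subseteq Y$ and $o.t.(Y\cap\gamma)$ is an $L$-cardinal for every $L$-cardinal $\gamma\in Y\cap[\omega_1,\kappa]$. Set $\dot X=\{(\dot\tau,r)\mid \dot\tau\in Y\text{ is a }Col(\omega,<\omega_1)^L\text{-name},\ r\in Y\cap Col(\omega,<\omega_1)^L\}$, which is an $L$-name. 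A standard Tarski--Vaught argument exploiting $Y\prec H_{\theta^*}^L$ and the genericity of any $L$-generic $G\ni p$ over the countably-many dense sets inside $Y$ shows $\dot X^G\prec L_\theta[G]$ is countable in $L[G]$ and closed under $\dot F^G$; a name-deciding-density argument then yields $\dot X^G\cap\gamma=Y\cap\gamma$ for every ordinal $\gamma\in Y$, so $o.t.(\dot X^G\cap\gamma)=o.t.(Y\cap\gamma)$ is an $L$-cardinal for every $L$-cardinal $\gamma\in Y\cap[\omega_1,\kappa]$.

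The principal obstacle is securing the $L$-cardinal property of $o.t.(Y\cap\gamma)$ also for $L$-cardinals $\gamma\in(\kappa,\theta]\cap Y$, since Proposition~\ref{compare strong cardinal} only propagates $WRP^L$ downward from $\kappa$ and does not directly give an analogue of $WRP^L(\theta)$ for $\theta>\kappa$. The expected resolution appeals to the condensation lemma for $L$: choosing $\theta^*$ large enough to reflect the relevant $L$-cardinality of $\theta$ into $Y$, the Mostowski collapse of $Y$ into $L$ is an initial segment of $L$ in which the image of $\theta$ retains enough of $\theta$'s cardinal structure by elementarity; combined with the structural richness of $Y$ secured in the previous paragraph and the regularity of $\theta$ in $L$, this propagates the $L$-cardinal condition from $\gamma=\kappa$ upward to $\gamma=\theta$, finishing the verification.
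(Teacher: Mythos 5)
Your overall route is the same as the paper's: use downward absoluteness to get $L\models WRP^L(\kappa)$, recast $\theta$-remarkability of $\omega_1$ in $L$ as a stationarity statement forced by $Col(\omega,<\omega_1)$ over $L$, and transfer the witnessing stationary set from $L$ into the generic extension (the paper simply quotes that $Col(\omega,<\omega_1)$ is stationary preserving, whereas you carry out the corresponding name construction by hand; these amount to the same thing). The problem is your last paragraph. The remarkable condensation at level $\theta$ requires the height $\beta=o.t.(X\cap\theta)$ of the collapse to be an $L$-cardinal, and you correctly observe that $WRP^L(\kappa)$ together with Proposition \ref{compare strong cardinal} only controls $o.t.(Y\cap\gamma)$ for $L$-cardinals $\gamma\leq\kappa$. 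Your proposed fix --- that condensation plus elementarity lets the image of $\theta$ ``retain enough of $\theta$'s cardinal structure'' --- cannot work. Condensation yields only that $\bar{\theta}$ is a cardinal in the collapsed structure $L_{\bar{\theta^*}}$, not in $L$ itself; the entire point of the notions $SRP^L$ and $WRP^L$ is that this internal statement does not transfer to $L$. If elementarity and condensation alone propagated the $L$-cardinal condition upward, then $SRP^L(\gamma)$ would hold for every $L$-cardinal $\gamma$ as a theorem of $ZFC$, which by Theorem \ref{characterization of srp above third level} is equivalent to the existence of $0^{\sharp}$. So that step is a genuine gap, not a routine verification.

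By contrast, the paper never attempts this upward propagation: following Lemma \ref{key lemma on relationship between remark and wfp}, the stationarity statement it works with is pinned at the level of $\kappa$ --- it concerns countable $X\prec L_\theta$ with $o.t.(X\cap\kappa)$ an $L$-cardinal, for each $L$-regular $\theta>\kappa$ --- which is exactly what $L\models WRP^L(\kappa)$ supplies, and the passage from that statement to the remarkability clause is delegated to (the proof of) the cited lemma. If you want to keep your more explicit approach, you should weaken your displayed forcing statement so that the order-type condition is demanded only at $L$-cardinals $\gamma\leq\kappa$, and then verify, by inspecting the proof of Lemma \ref{key lemma on relationship between remark and wfp}, that this instance is the one actually needed for the remarkability being claimed; manufacturing the condition at $\gamma$ all the way up to $\theta$ is not something your hypotheses give you.
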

\begin{proof}
$L\models WRP^{L}(\kappa)$ \text{if{f}} $\{X| X\prec L_{\kappa^{+}}, |X|=\omega$ and $o.t.(X\cap\kappa)$ is an $L$-cardinal\} is stationary in $L$ \text{if{f}} for any $L$-regular cardinal $\theta>\kappa, \{X| X\prec L_{\theta}, |X|=\omega$ and $o.t.(X\cap\kappa)$ is an $L$-cardinal\} is stationary in $L$. For $L$-regular cardinal $\theta>\kappa$, $L\models\omega_1$ is $\theta$-remarkable \text{if{f}} for any $G$ which is $Col(\omega, <\omega_1)$-generic over $L$, $L[G]\models \{X\in [L_{\theta}]^{\omega}| X=ran(\pi), \pi: (L_{\beta}[G\upharpoonright\alpha],\in,L_{\beta},G\upharpoonright\alpha)\prec (L_{\theta}[G],\in, L_{\theta},G)$ where $\alpha=crit(\pi)<\beta<\omega_1$ and $\beta$ is an $L$-cardinal\} is stationary. Note that $L\models WRP^{L}(\kappa)$ and $Col(\omega, <\omega_1)$ is stationary preserving.
\end{proof}

\begin{corollary}
``For any $L$-cardinal $\gamma\geq\omega_1, WRP^{L}(\gamma)$ holds" is equiconsistent with $\omega_1$ is remarkable.
\end{corollary}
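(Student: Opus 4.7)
The plan is to establish the equiconsistency by producing, from a model of each side, a model of the other. Both directions essentially reduce to combining results already proved in this section: Corollary \ref{key coro of above lemma} supplies one direction and Proposition \ref{new non propro} supplies the other, so the task is to assemble these cleanly.

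For the direction from $WRP$ to the large cardinal, I would assume $V$ satisfies $ZFC$ together with ``$WRP^{L}(\gamma)$ for every $L$-cardinal $\gamma\geq\omega_{1}$'', and apply Proposition \ref{new non propro} at the specific value $\kappa=\omega_{1}^{V}$. First note that $\omega_{1}^{V}$ is always an $L$-cardinal, since any $L$-bijection between $\omega_{1}^{V}$ and a smaller ordinal would also lie in $V$, contradicting regularity of $\omega_{1}^{V}$. Hence the hypothesis of Proposition \ref{new non propro} is available at $\kappa=\omega_{1}$. Its conclusion is that $L\models\omega_{1}$ is $\theta$-remarkable for every $L$-regular $\theta>\omega_{1}$, which is by definition ``$L\models\omega_{1}^{V}$ is remarkable''. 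Thus $L^{V}$ is a model of $ZFC+\exists$ remarkable cardinal.

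For the converse direction, I would start from a model $V$ of $ZFC$ with a remarkable cardinal $\kappa$. By \cite[Lemma~1.7]{Schindler2}, $\kappa$ remains remarkable in $L$, so I may pass to $L^{V}$ and assume $V=L$ with $\kappa$ remarkable. Remarkability implies inaccessibility, so forcing with $Col(\omega,<\kappa)$ over $L$ yields a generic $G$ with $L[G]\models ZFC$ and $\kappa=\omega_{1}^{L[G]}$. Corollary \ref{key coro of above lemma} then applies directly to give $L[G]\models WRP^{L}(\gamma)$ for every $L$-cardinal $\gamma\geq\kappa$; but $L$-cardinals $\geq\kappa$ are exactly the $L$-cardinals $\geq\omega_{1}^{L[G]}$, so $L[G]$ witnesses the desired consistency.

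There is no serious obstacle: both directions follow immediately from the cited structural results once two bookkeeping points are verified, namely that $\omega_{1}^{V}$ is always an $L$-cardinal (so that $WRP^{L}(\omega_{1})$ is a meaningful instance of the hypothesis) and that remarkability reflects down to $L$. The only mild care is in matching the parameter $\kappa$ in Proposition \ref{new non propro} to $\omega_{1}$ itself rather than some larger $L$-cardinal, so that $\theta$-remarkability is obtained for all $L$-regular $\theta>\omega_{1}$ and therefore amounts to full remarkability in $L$.
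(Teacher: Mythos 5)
Your overall decomposition is the same as the paper's: its proof of this corollary is a one-line appeal to Corollary \ref{key coro of above lemma} for the forcing direction and to Proposition \ref{new non propro} for the strength direction, and your second paragraph fills in the forcing direction exactly as intended (pass to $L$ by downward absoluteness of remarkability, force with $Col(\omega,<\kappa)$, and observe that the $L$-cardinals $\geq\kappa$ are precisely the $L$-cardinals $\geq\omega_1^{L[G]}$). That half is fine.

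The forward direction as you have assembled it, however, cannot be right, and the problem is exactly the ``mild care'' you dismiss at the end: you use only the single instance $WRP^{L}(\omega_1)$ of the hypothesis. By Proposition \ref{srp of omega first}, $WRP^{L}(\omega_1)$ is equivalent to ``$\omega_1$ is a limit cardinal in $L$,'' hence (since $\omega_1^{V}$ is automatically regular in $L$ and $L\models GCH$) to ``$\omega_1$ is inaccessible in $L$.'' If that single instance yielded ``$L\models\omega_1$ is remarkable,'' the corollary would make a remarkable cardinal equiconsistent with a mere inaccessible, which is false since remarkable cardinals have strictly higher consistency strength. So reducing to $\kappa=\omega_1$ proves too much from too little; the universal quantifier over $\gamma$ in the hypothesis is not decorative. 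The reason is visible in the proof of Proposition \ref{new non propro}: to obtain $\theta$-remarkability of $\omega_1$ in $L$ one needs stationarily many countable $X\prec L_{\theta}$ whose transitive collapse is $L_{\beta}$ with $\beta$ an $L$-cardinal, i.e.\ one needs $o.t.(X\cap\theta)$ to be an $L$-cardinal, and this is controlled by the reflecting property at $\theta$ itself, not at $\omega_1$. The correct assembly therefore invokes, for each $L$-regular $\theta>\omega_1$, the instance $WRP^{L}(\theta)$ of the hypothesis (every such $\theta$ is an $L$-cardinal $\geq\omega_1$), obtaining $\theta$-remarkability level by level and hence full remarkability of $\omega_1$ in $L$. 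Your bookkeeping observation that $\omega_1^{V}$ is an $L$-cardinal is correct but beside the point; what must be checked is that the hypothesis is used at every level $\theta$, not just at the bottom one.
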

\begin{proof}
Follows from Corollary \ref{key coro of above lemma} and Proposition \ref{new non propro}.
\end{proof}

\begin{theorem}\label{strength of omega two}
(Set forcing)\quad The following two theories are equiconsistent:
\begin{enumerate}[(1)]
  \item $SRP^{L}(\omega_2)$.
  \item $ZFC\, +$ there exists a remarkable cardinal with a weakly inaccessible cardinal above it.
\end{enumerate}
\end{theorem}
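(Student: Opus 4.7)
The plan is to prove both directions of equiconsistency, with the reverse implication $(2) \Rightarrow (1)$ carrying the main forcing content.

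For $(1) \Rightarrow (2)$, I show that if $V \models SRP^{L}(\omega_{2})$ then $L$ itself models (2). Proposition~\ref{second strong reflecting property} gives that $\omega_{2}^{V}$ is a limit cardinal in $L$, and since $\omega_{2}^{V}$ is regular in $V$ and an $L$-cardinal we have $\mathrm{cf}^{L}(\omega_{2}^{V}) \geq \mathrm{cf}^{V}(\omega_{2}^{V}) = \omega_{2}^{V}$, so $\omega_{2}^{V}$ is weakly inaccessible in $L$. Proposition~\ref{second strong reflecting property} also yields $WRP^{L}(\gamma)$ for every $L$-cardinal $\gamma \in [\omega_{1}^{V}, \omega_{2}^{V})$; by downward absoluteness each such $WRP^{L}(\gamma)$ holds in $L$, and Proposition~\ref{new non propro} then delivers remarkability in $L$ (applied to $\gamma$ the least $L$-cardinal $\geq \omega_{1}^{V}$ and using the cofinal supply of such $\gamma$'s below $\omega_{2}^{V}$). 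Thus $L$ models (2).

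For $(2) \Rightarrow (1)$, pass to $L$ (by downward absoluteness of both large-cardinal properties) and fix $\kappa < \lambda$ with $\kappa$ remarkable and $\lambda$ weakly inaccessible in $L$. I force over $L$ with the two-step Levy collapse
\[
\mathbb{P} \;=\; \mathrm{Col}(\omega,{<}\kappa) \ast \dot{\mathrm{Col}}(\omega_{1},{<}\lambda).
\]
Standard analysis of the Levy collapse yields $\omega_{1}^{L[G][H]} = \kappa$ and $\omega_{2}^{L[G][H]} = \lambda$: the first factor is $\kappa$-c.c.\ and collapses below $\kappa$ to $\omega$, while the second factor is $\omega_{1}$-closed in $L[G]$ (so preserves $\omega_{1} = \kappa$ and adds no new $\omega$-sequences of ordinals) and $\lambda$-c.c.\ by the inaccessibility of $\lambda$ in $L$ (so preserves $\lambda$), collapsing every cardinal in $(\omega_{1},\lambda)$ to $\omega_{1}$. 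Since $\lambda$ remains weakly inaccessible in $L$ in the extension, $\omega_{2}^{L[G][H]} = \lambda$ is a limit cardinal in $L$. Invoking Proposition~\ref{second strong reflecting property}, what remains is to verify $SRP^{L}(\gamma)$ in $L[G][H]$ for every $L$-cardinal $\gamma \in [\kappa,\lambda)$; the case $\gamma = \kappa$ is immediate from Proposition~\ref{srp of omega first} and the fact that $\kappa$ is a limit cardinal in $L$.

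The main obstacle is $SRP^{L}(\gamma)$ for an $L$-cardinal $\gamma \in (\kappa,\lambda)$. Remarkability of $\kappa$ already gives $L[G] \models WRP^{L}(\gamma)$ via Corollary~\ref{key coro of above lemma}, so the task is to upgrade \emph{stationary} to \emph{club} in $L[G][H]$. I would apply Proposition~\ref{equivalent forms of strong reflecting property}(c): the $\mathrm{Col}(\omega_{1},\gamma)$-factor of $H$ supplies a generic bijection $\pi : \omega_{1} \to \gamma$, and the goal is to show that $\{\theta < \omega_{1} : o.t.(\pi``\theta) \text{ is an } L\text{-cardinal}\}$ contains a club. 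The plan is to combine (i) the density in $\mathrm{Col}(\omega_{1},\gamma)$ of conditions whose range is (the ordinals of) a structure isomorphic to $L_{\eta}$ for some $L$-cardinal $\eta < \gamma$, and (ii) the $\omega_{1}$-closure of the second factor, which permits a continuous fusion argument to glue such conditions through all limit stages into a club of good $\theta$. An alternative route is to exhibit $F : \gamma^{<\omega} \to \gamma$ in $L[G]$ verifying Proposition~\ref{reflecting}(4) and invoke upward absoluteness of $SRP^{L}(\gamma)$ (recorded after Corollary~\ref{key coro of above lemma}) to transfer to $L[G][H]$, using that the $\omega_{1}$-closed tail adds no new countable subsets of $\gamma$. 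In either approach, the role of the weakly inaccessible $\lambda$ above $\kappa$ is to supply the uniform room needed for the stationary-to-club upgrade to succeed simultaneously across all $L$-cardinals in $[\kappa,\lambda)$.
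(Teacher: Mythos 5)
Your forcing for $(2)\Rightarrow(1)$ is the right one (it is exactly the construction of \cite[Section 3.1]{YongCheng1}, which the paper simply cites), and you correctly locate the difficulty in upgrading $WRP^{L}(\gamma)$ to $SRP^{L}(\gamma)$ for $L$-cardinals $\gamma\in(\kappa,\lambda)$; but neither of your two routes closes that gap. Route (ii) cannot work: a function $F:\gamma^{<\omega}\rightarrow\gamma$ as in Proposition \ref{reflecting}(4) lying in $L[G]$ would give $L[G]\models SRP^{L}(\gamma)$, yet $0^{\sharp}\notin L[G]$ and $\omega_2^{L[G]}=(\kappa^{+})^{L}$, so by Theorem \ref{characterization of srp above third level} applied in $L[G]$, $SRP^{L}(\gamma)$ fails there for every $L$-cardinal $\gamma>(\kappa^{+})^{L}$; the witnessing $F$ must be added by the second collapse. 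Route (i) is the viable one, and density of ``good'' conditions does follow from the stationarity supplied by Corollary \ref{key coro of above lemma}; but the limit stages are precisely what ``a continuous fusion argument'' cannot wave away: for an increasing countable chain of conditions with good ranges, the order type of the union of the ranges is not the supremum of the order types, and asking that the union of a chain of good hulls again be good is exactly the stationary-versus-club question you are trying to settle. The real argument must carry the remarkable condensation maps themselves (not merely the fact that certain order types are $L$-cardinals) through the limits; as written your sketch is circular at this point.

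The direction $(1)\Rightarrow(2)$ has a more serious gap. You infer $L\models$ ``$\omega_1^{V}$ is remarkable'' from $WRP^{L}(\gamma)$ for $L$-cardinals $\gamma\in[\omega_1,\omega_2)$ via Proposition \ref{new non propro}. But $SRP^{L}(\omega_2)$ only gives reflection at $L$-cardinals below $\omega_2^{V}$, and this can yield $\theta$-remarkability of $\omega_1^{V}$ in $L$ only for $\theta$ bounded by (roughly) $\omega_2^{V}$, whereas full remarkability requires $\theta$-remarkability for $L$-regular $\theta$ unbounded in the ordinals. (If a single $WRP^{L}(\kappa)$ really gave $\theta$-remarkability for all regular $\theta>\kappa$, then taking $\kappa=\omega_1$ and using Proposition \ref{srp of omega first} we would conclude that ``$\omega_1$ is a limit cardinal in $L$'' implies ``$\omega_1$ is remarkable in $L$'', which is false; note that the paper's own corollary on this point assumes $WRP^{L}(\gamma)$ for a proper class of $\gamma$.) Nor can you retreat to $L_{\omega_2^{V}}$: that model may see $\omega_1^{V}$ as remarkable but contains no weakly inaccessible cardinal above it, so it is not a model of (2). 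The paper closes this gap by a genuinely different route: from $SRP^{L}(\omega_2)$ it forces a model of $Z_3+HP(L)$ (\cite[Sections 3.2--3.4]{YongCheng1}) and then invokes \cite[Theorem 3.2]{YongCheng2}, which extracts full remarkability of $\omega_1$ in $L$ from $Z_3+HP(L)$. Some global input of this kind is needed and is missing from your argument.
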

\begin{proof}
We first show that the consistency of (2) implies the consistency of (1). Let $S=\{\omega_1\leq\alpha<\omega_2\mid \alpha$ is an $L$-cardinal\}. Note that $SRP^{L}(\omega_2)$ is equivalent to $S$ being a club such that $SRP^{L}(\alpha)$ holds for any $\alpha\in S$. In \cite[Section 3.1]{YongCheng1}, assuming there exists a remarkable cardinal with a weakly inaccessible cardinal above it, we force a model $L[G,H]$ in which $S$ is a club and $SRP^{L}(\alpha)$ holds for any $\alpha\in S$. So $SRP^{L}(\omega_2)$ holds in $L[G,H]$.

From \cite[Section 3.2-3.4]{YongCheng1}, if $S$ is a club and $SRP^{L}(\alpha)$ holds for any $\alpha\in S$, then we can force a model of $Z_3\, +\, HP(L)$ . So the consistency of (1) implies the consistency of $Z_3 + HP(L)$. By \cite[Theorem 3.2]{YongCheng2}, $Z_3\, +\, HP(L)$  implies $L\models ZFC\, +\, \omega_1^{V}$ is remarkable. By Proposition \ref{second strong reflecting property}, $\omega_2^{V}$ is inaccessible in $L$. So the consistency of (1) implies the consistency of (2).
\end{proof}

\begin{definition}\label{general definition of strong reflecting property and weakly}
Suppose $M$ is an inner model and $\gamma\geq\omega_1$ is an $M$-cardinal. We say that $\gamma$ has the strong reflecting property for $M$-cardinals, denoted $SRP^{M}(\gamma)$, if and only if for some  regular cardinal $\kappa> \gamma$,  if $X\prec H_{\kappa}, |X|=\omega$ and $\gamma\in X$, then $\bar{\gamma}$ is an $M$-cardinal.
\end{definition}

\begin{definition}\label{covering thm for mc}
Suppose $M$ is an inner model. We say that $M$ has the full covering property if for any set $X$ of ordinals, there is $Y\in M$ such that $X\subseteq Y$ and $|Y|=|X|+\omega_1$. We say that $M$ has the rigidity property if there is no nontrivial elementary embedding from $M$ to $M$.
\end{definition}

\begin{theorem}\label{characterizaton of zero sharp thm}
Suppose $M$ is an inner model which satisfies Convention \ref{convention} and has both the full covering and the rigidity property. Then, for every M-cardinal $\gamma>\omega_2, SRP^{M}(\gamma)$ fails.
\end{theorem}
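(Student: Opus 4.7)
The plan is to locate an $M$-cardinal that must simultaneously be a successor and a limit in $M$, yielding a contradiction.

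First I would establish the $M$-analogs of Propositions \ref{transive collapse of L cardinal}, \ref{compare strong cardinal} and the $(1)\Rightarrow(2)$ direction of Proposition \ref{second strong reflecting property}. The same arguments apply once one uses Convention \ref{convention} to represent $M|\theta$ as a definable subset of $H_\theta$ for regular $\theta>\omega_2$. Moreover, the proof of Proposition \ref{transive collapse of L cardinal} extends to arbitrary $|Z|$, not just $|Z|\le\omega_1$: the auxiliary substructure $Y$ appearing there is already chosen countable regardless of the size of $Z$, and the only absoluteness one needs is that ``$\bar{\bar\gamma}$ is not an $M$-cardinal'' transfer between the countable $\bar Y$ and $V$.

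Next, since $M\subseteq V$, every $V$-cardinal is automatically an $M$-cardinal, so $\omega_2^V$ is an $M$-cardinal. Setting $\gamma':=(\omega_2^V)^{+M}$, I would have a \emph{successor} $M$-cardinal with $\omega_2<\gamma'$. Assuming for contradiction that $SRP^M(\gamma)$ holds for some $M$-cardinal $\gamma>\omega_2$, then $\gamma\ge\gamma'$ (as $\gamma'$ is the least $M$-cardinal above $\omega_2$), and the $M$-analog of Proposition \ref{compare strong cardinal} would yield $SRP^M(\gamma')$.

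Then I would apply the $M$-analog of Proposition \ref{second strong reflecting property} at $\gamma'$ in place of $\omega_2$: for any $\alpha<\gamma'$, choose $Z\prec H_\kappa$ with $\alpha\subseteq Z$, $\gamma'\in Z$ and $|Z|=|\alpha|$; the extended Proposition \ref{transive collapse of L cardinal} then gives that $\bar{\gamma'}$ is an $M$-cardinal with $\alpha\le\bar{\gamma'}<\gamma'$. Hence $M$-cardinals are unbounded in $\gamma'$, so $\gamma'$ is a \emph{limit} $M$-cardinal, contradicting its being a successor.

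The main obstacle is the absoluteness step inside the extended Proposition \ref{transive collapse of L cardinal}: one needs ``is an $M$-cardinal'' to transfer correctly between the countable transitive $\bar Y$ and $V$. This is precisely where the full covering and the rigidity properties of $M$ enter, in tandem with Convention \ref{convention}: together they guarantee that the internally computed $M^{\bar Y}$ agrees with the true $M$ on the ordinals of $\bar Y$, so that ``$\bar{\bar\gamma}$ is not an $M$-cardinal'' in $\bar Y$ implies the same in $V$.
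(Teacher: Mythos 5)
Your reduction to a successor/limit contradiction cannot work, and the place it breaks is the inequality $\bar{\gamma'}<\gamma'$. In Proposition \ref{second strong reflecting property} one gets $\bar{\omega_2}<\omega_2$ only because the hull $Z$ has size $\omega_1<|\omega_2|$, so $Z$ cannot contain $\omega_2$ as a subset. For $\gamma'=(\omega_2)^{+M}$ this is no longer available: $\gamma'$ need not be a $V$-cardinal, and typically $|\gamma'|^{V}=\omega_2$. Once $\alpha\geq\omega_2$, any $Z\prec H_{\kappa}$ with $\alpha\subseteq Z$ and $\gamma'\in Z$ contains $\omega_2$ as a subset together with a bijection from $\omega_2$ onto $\gamma'$, hence contains $\gamma'$ as a subset, so $\bar{\gamma'}=\gamma'$ and the collapse produces no new $M$-cardinal. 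Your argument therefore only shows that $M$-cardinals are unbounded below $|\gamma'|^{V}$, not below $\gamma'$, and there is no conflict with $\gamma'$ being a successor $M$-cardinal whose $M$-predecessor is $\omega_2=|\gamma'|^{V}$. That the gap is unfixable is witnessed by Theorem \ref{characterization of srp above third level}: if $0^{\sharp}$ exists then $SRP^{L}(\gamma)$ holds for \emph{every} $L$-cardinal $\gamma$, including the successor $L$-cardinal $(\omega_2)^{+L}$; since your argument invokes covering and rigidity only for an absoluteness step that is automatic for $L$ (by condensation), it would, if correct, refute $SRP^{L}((\omega_2)^{+L})$ outright in $ZFC$, contradicting that theorem.

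Relatedly, covering and rigidity are not needed where you place them. The absoluteness in Proposition \ref{transive collapse of L cardinal} is just upward $\Sigma_1$-absoluteness of ``there is a surjection in $M$ from a smaller ordinal onto $\bar{\bar{\gamma}}$,'' together with Convention \ref{convention}; the paper's proof of that proposition uses neither hypothesis, and your observation that it works for arbitrary $|Z|$ is correct but vacuous once $Z\supseteq\gamma$. In the actual proof the two hypotheses do the real work elsewhere: from an $\omega_1$-chain of countable hulls one forms $Z=\bigcup_{\alpha<\omega_1}Z_{\alpha}$ with $crit(\pi)=\bar{\omega_2}$ and extracts the $M$-ultrafilter $U=\{X\subseteq\bar{\omega_2}\mid X\in M\wedge\bar{\omega_2}\in\pi(X)\}$; the full covering property is what makes $U$ countably complete (any countable $Y\subseteq U$ sits inside some $j_{\alpha}``N_{\alpha}$, whose trace on $\mathcal{P}(\bar{\omega_2})\cap M$ is covered by a single $T\in M$ of size $\omega_1<crit(\pi)$, so that $U\cap T\in N$ and $\bigcap(U\cap T)\neq\emptyset$); and the resulting well-founded ultrapower yields a nontrivial $j\colon M\to M$, which is what contradicts rigidity. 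Any correct proof must produce such an embedding or something of comparable strength, since the theorem is calibrated exactly against the existence of $0^{\sharp}$-like objects.
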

\begin{proof}
Suppose $SRP^{M}(\gamma)$ holds for some $\gamma>\omega_2$. Let $\kappa>\gamma$ be the witnessing  regular cardinal for $SRP^{M}(\gamma)$. Build an elementary chain $\langle Z_{\alpha}\mid\alpha<\omega_1\rangle$ of submodels of $H_{\kappa}$ such that for all $\alpha<\beta<\omega_1$, $Z_{\alpha}\prec Z_{\beta}\prec H_{\kappa}$, $Z_{\alpha}\in Z_{\beta}$ , $|Z_{\alpha}|=\omega$ and $\{\gamma, \omega_2\} \subseteq Z_{0}$.

Let $Z=\bigcup_{\alpha<\omega_1} Z_{\alpha}$. Then $|Z|=\omega_1$ and $Z\prec H_{\kappa}$. Let $\pi: N\cong Z\prec H_{\kappa}$ and $\pi_{\alpha}: N_{\alpha}\cong Z_{\alpha}\prec H_{\kappa}$ be the inverses of the collapsing maps. Let $j_{\alpha}: N_{\alpha}\prec N$ be the induced elementary embedding.  Since $\omega_1\subseteq Z$, $crit(\pi)>\bar{\omega_1}$. Since $\omega_2\in Z$ and $|Z|=\omega_1, crit(\pi)\leq \bar{\omega_2}$. So $crit(\pi)=\bar{\omega_2}$.

Note that Proposition \ref{transive collapse of L cardinal} still holds if we replace $L$ with $M$. By $SRP^{M}(\gamma)$, $\bar{\gamma}$ is an $M$-cardinal. Since $M|\bar{\gamma}$ is definable in $H_{\kappa}$, $\mathcal{P}(\bar{\omega_2})\cap M\subseteq M|\bar{\gamma}\in N$ and $\mathcal{P}(\bar{\omega_2})\cap M\in N$. Define $U=\{X\subseteq \bar{\omega_2}\mid X\in M\wedge\bar{\omega_2}\in \pi(X)\}$. $U$ is an $M$-ultrafilter. For  $\alpha<\omega_1$, the image of $Z_{\alpha}$ under the transitive collapse of $Z$ is $j_{\alpha}``N_{\alpha}$ and $j_{\alpha}``N_{\alpha}\in N$.

\begin{lemma}\label{a key lemma on U}
$U$ is countably complete.
\end{lemma}
\begin{proof}
Suppose $Y\subseteq U$ and $Y$ is countable. We show that $\bigcap Y\neq\emptyset$. Since $Y\subseteq N$, take $\alpha<\omega_1$ large enough such that $Y\subseteq j_{\alpha}``N_{\alpha}$. Let $S=\mathcal{P}(\bar{\omega_2})\cap M\cap j_{\alpha}``N_{\alpha}$.  Note that $S\in N$ and $N\models S$ is countable.

Note that $H_{\kappa}\models ``M$ has the full covering property"\footnote{Here we use that $M|\theta$ is definable in $H_{\theta}$ for regular cardinal $\theta>\omega_2$.} and hence  $N\models M$ has the full covering property.  Fix $T\in N$ such that $T\subseteq \mathcal{P}(\bar{\omega_2})\cap M, T\supseteq S, T\in M$ and $N\models |T|=\omega_1$. Since $\bar{\omega_2}=crit(\pi)>\omega_1, \pi(T)=\pi``T$. Since $T\in N, \mathcal{P}(T)\cap M\in N$.

\begin{claim}
$U\cap T\in N$.
\end{claim}
\begin{proof}
Since $\pi(T)=\pi``T\in M, \pi``(U\cap T)=\{\pi(A)\mid A\in T\wedge \bar{\omega_2}\in \pi(A)\}=\{B\in \pi(T)\mid \bar{\omega_2}\in B \}$ and $\pi``(U\cap T)\in M$.  Note that $\mathcal{P}(\pi``T)\cap M=\pi``(\mathcal{P}(T)\cap M)$ since for all $D\in \mathcal{P}(T)\cap M, \pi(D)=\pi``D$. Since $\pi``(U\cap T)\in \mathcal{P}(\pi``T)\cap M$, $\pi``(U\cap T)=\pi(D)=\pi``D$ for some $D\in \mathcal{P}(T)\cap M\subseteq N$. So $U\cap T=D$ and hence $U\cap T\in N$.
\end{proof}

Note that $Y\subseteq j_{\alpha}``N_{\alpha}\cap \mathcal{P}(\bar{\omega_2})\cap M=S\subseteq T$. Since $Y\subseteq T\cap U$, to show that $\bigcap Y\neq\emptyset$, it suffices to show that $\bigcap (U\cap T)\neq\emptyset$. Note that $\bar{\omega_2}\in \bigcap\pi``(U\cap T)$ and $\pi(U\cap T)=\pi``(U\cap T)$. Then $\bigcap\pi``(U\cap T)=\bigcap\pi(U\cap T)=\pi(\bigcap (U\cap T))\neq\emptyset$. So $\bigcap (U\cap T)\neq\emptyset$.
\end{proof}

So we can build a nontrivial embedding from $M$ to $M$ which contradicts the rigidity property of $M$.
\end{proof}

\begin{theorem}\label{characterization of srp above third level}
The following are equivalent:
\begin{enumerate}[(i)]
  \item $SRP^{L}(\gamma)$ holds for some $L$-cardinal $\gamma>\omega_2$.
  \item $0^{\sharp}$ exists.
  \item $SRP^{L}(\gamma)$ holds for every $L$-cardinal $\gamma\geq\omega_1$.
\end{enumerate}
\end{theorem}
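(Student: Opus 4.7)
The plan unfolds along three arrows. Since (iii) $\Rightarrow$ (i) is immediate (take any $L$-cardinal $\gamma > \omega_2$, which always exists), only (i) $\Rightarrow$ (ii) and (ii) $\Rightarrow$ (iii) require argument.

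For (i) $\Rightarrow$ (ii), I argue contrapositively by invoking Theorem \ref{characterizaton of zero sharp thm} with $M = L$. If $0^\sharp$ does not exist, then Jensen's Covering Lemma provides the full covering property for $L$ in the sense of Definition \ref{covering thm for mc}, and the standard equivalence of the existence of $0^\sharp$ with the existence of a non-trivial elementary embedding $j: L \to L$ supplies the rigidity property for $L$. Since $L$ trivially satisfies Convention \ref{convention}, Theorem \ref{characterizaton of zero sharp thm} then forces $SRP^L(\gamma)$ to fail for every $L$-cardinal $\gamma > \omega_2$, contradicting (i).

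For (ii) $\Rightarrow$ (iii), I use the classical consequence of the existence of $0^\sharp$: every uncountable $V$-cardinal is a Silver indiscernible and therefore inaccessible in $L$. Given an $L$-cardinal $\gamma \geq \omega_1$, pick any $V$-regular cardinal $\theta > \gamma$ (for instance, the $V$-successor of $|\gamma|^V$). Then $\theta$ is uncountable in $V$, hence $L$-inaccessible, and consequently the set $D = \{\alpha < \theta \mid \alpha \text{ is an } L\text{-cardinal}\}$ is a club in $\theta$. Applying Proposition \ref{equivalent forms of strong reflecting property}(c) with $\kappa = \theta$ and $\pi: \theta \to \theta$ the identity bijection gives $SRP^L(\theta)$, since for each $\alpha \in D$ we have $o.t.(\{\pi(\beta) \mid \beta < \alpha\}) = \alpha$, which is an $L$-cardinal. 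Finally, Proposition \ref{compare strong cardinal} propagates $SRP^L(\theta)$ down to $SRP^L(\gamma)$.

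The main difficulty does not actually appear in this theorem; it is absorbed into Theorem \ref{characterizaton of zero sharp thm}, whose delicate countable-completeness argument for the induced $M$-ultrafilter is what upgrades rigidity plus covering into the failure of $SRP^M$ above $\omega_2$. Once that theorem and Jensen's Covering Lemma are available, the proof of the present equivalence reduces to the bookkeeping sketched above together with the standard fact that $0^\sharp$ forces every $V$-regular uncountable cardinal to be inaccessible in $L$.
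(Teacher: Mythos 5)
Your decomposition and your proof of (i) $\Rightarrow$ (ii) coincide with the paper's: both deduce it from Theorem \ref{characterizaton of zero sharp thm} applied to $M=L$, using Jensen's Covering Lemma and Kunen's rigidity theorem under $\neg 0^{\sharp}$, and (iii) $\Rightarrow$ (i) is indeed immediate. The gap is in (ii) $\Rightarrow$ (iii). You derive $SRP^{L}(\theta)$ from the single fact that the $L$-cardinals below $\theta$ form a club in $\theta$, by applying Proposition \ref{equivalent forms of strong reflecting property}(c) with $\kappa=\theta$ and $\pi=\mathrm{id}$. That inference cannot be sound for $\theta>\omega_1$: for a countable $X\prec H_{\lambda}$ the set $X\cap\theta$ is not an initial segment of $\theta$, so $o.t.(X\cap\theta)$ is not of the form $o.t.(\{\pi(\alpha)\mid\alpha<\delta\})$ for any $\delta$ in your club, and the club gives no control over it. Concretely, force with $Col(\omega_1,<\kappa)$ over $L$ with $\kappa$ inaccessible in $L$: in the extension the $L$-cardinals below $\omega_2=\kappa$ form a club in $\omega_2$, yet $SRP^{L}(\omega_2)$ fails, since by Proposition \ref{compare strong cardinal} it would imply $SRP^{L}(\omega_1)$, i.e.\ (Proposition \ref{srp of omega first}) that $\omega_1=\omega_1^{L}$ is a limit cardinal in $L$, which it is not. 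So ``there is a club of $L$-cardinals below $\theta$'' is strictly weaker than $SRP^{L}(\theta)$; the (c) $\Rightarrow$ (a) direction you invoke really only works in the case $\kappa=\omega_1$ (the case actually proved in the cited reference), where $X\cap\omega_1$ is itself an ordinal lying in the club.

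The paper's (ii) $\Rightarrow$ (iii) avoids this entirely and is shorter: if $X\prec H_{\kappa}$ is countable with $\gamma\in X$, then $\mathcal{M}(0^{\sharp},\gamma+1)\in X$ (it is definable from $\gamma$ and $0^{\sharp}$), its image under the transitive collapse of $X$ is $\mathcal{M}(0^{\sharp},\bar{\gamma}+1)$, and since $\mathcal{M}(0^{\sharp},\alpha)\prec L$ for every ordinal $\alpha$, the statement ``$\gamma$ is a cardinal'' passes from $\mathcal{M}(0^{\sharp},\gamma+1)$ down to $\mathcal{M}(0^{\sharp},\bar{\gamma}+1)$ and hence up to $L$, so $\bar{\gamma}$ is an $L$-cardinal. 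If you want to keep your reduction to a single regular $\theta$ followed by Proposition \ref{compare strong cardinal}, you still need an argument of this kind (indiscernibles or elementary substructures of $L$) to establish $SRP^{L}(\theta)$ in the first place; the club of $L$-cardinals below $\theta$ alone does not do it.
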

\begin{proof}
$(i)\Rightarrow (ii)$ Assume $0^{\sharp}$  does not exist. Then $L$ satisfies all the conditions for $M$ in Theorem \ref{characterizaton of zero sharp thm}. From the proof of Theorem \ref{characterizaton of zero sharp thm} (replace $M$ with $L$), $SRP^{L}(\gamma)$ does not hold for any $L$ cardinal $\gamma>\omega_2$.

$(ii) \Rightarrow (iii)$ Note that if $X\prec H_{\kappa}$ and $\gamma\in X$, then $\mathcal{M}(0^{\sharp}, \gamma+1)\in X$ and its image under the transitive collapse of $X$ is $\mathcal{M}(0^{\sharp}, \bar{\gamma}+1)$.\footnote{$\mathcal{M}(0^{\sharp}, \alpha)$ is the unique transitive $(0^{\sharp},\alpha)$-model. For the notation, see \cite{Higherinfinite}.} Note that for $\alpha\in Ord, \mathcal{M}(0^{\sharp}, \alpha)\prec L$.
\end{proof}

So for $n\geq 3, SRP^{L}(\omega_n)$ is equivalent to $0^{\sharp}$ exists. We have characterized $SRP^{L}(\omega_n)$ for $n\geq 1$.

\begin{definition}
Suppose $M$ is an inner model. For $M$-cardinal $\lambda$, let $SRP^{M}_{<\lambda}(\lambda)$ denote the statement: for some regular cardinal $\theta>\lambda$, if $X\prec H_{\theta},|X|<\lambda$ and $\lambda\in X$, then $\bar{\lambda}$  is an $M$-cardinal.
\end{definition}

\begin{fact}\label{covering thm for mc}
(\cite[Theorem 1.3]{CoveringLemma})\quad Assume $0^{\dag}$ does not exist but there is an inner model with a measurable cardinal and $L[U]$ is chosen such that $\kappa=crit(U)$ is as small as possible. The one of the following holds:
      \begin{enumerate}[(a)]
        \item For every set $X$ of ordinals, there is a set $Y\in L[U]$ such that $Y\supseteq X$ and $|Y|=|X| + \omega_1$;
        \item There is a sequence $C\subseteq\kappa$, which is Prikry generic over $L[U]$, such that for all set $X$ of ordinals, there is a set $Y\in L[U,C]$ such that $Y\supseteq X$ and $|Y|=|X| + \omega_1$.
      \end{enumerate}
\end{fact}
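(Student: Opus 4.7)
The plan is to establish this dichotomy by adapting the Dodd--Jensen covering argument to the inner model $L[U]$, following Mitchell. Given a set $X$ of ordinals, I would first attempt to build a covering set $Y\in L[U]$ with $Y\supseteq X$ and $|Y|=|X|+\omega_1$. If every such attempt succeeds, we are in case (a). Otherwise, the failure must produce a canonical obstruction, and the hypothesis that $0^{\dag}$ does not exist should force this obstruction to be (coded by) a Prikry generic sequence through $\kappa$.

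Concretely, fix $X$ and a large regular $\theta$. Build a continuous $\subseteq$-increasing chain $\langle N_{\alpha}\mid\alpha<\omega_1\rangle$ of elementary submodels of $H_{\theta}$ of size $|X|$, with $X$, $L[U]|\theta$, and $U$ all in $N_0$. Let $N=\bigcup_{\alpha<\omega_1}N_{\alpha}$, with transitive collapse $\bar{N}$ and inverse $\pi\colon\bar{N}\to N$. Natural candidates for a cover are obtained from $\pi^{-1}(L[U]|\theta)$ together with Skolem hulls inside $L[U]$. If some standard variant of this construction yields a covering set inside $L[U]$, we land in case (a). If it fails for every choice, then $\pi\!\upharpoonright\!(\bar{N}\cap L[U])$ gives a nontrivial elementary embedding into $L[U]$. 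In the absence of $0^{\dag}$, Kunen's analysis of elementary embeddings of $L[U]$ forces this embedding to factor through an iterated ultrapower by $U$, and the critical sequence of the iteration provides a cofinal $\omega$-sequence $C\subseteq\kappa$ whose proper initial segments all lie in $L[U]$.

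The main obstacle is to verify that this $C$ is actually Prikry generic over $L[U]$, i.e.\ meets every dense open subset of the Prikry forcing $P_U$ in $L[U]$. This uses the canonicity of iterations of $U$ (forced by the absence of $0^{\dag}$) together with a reflection argument: any dense $D\subseteq P_U$ in $L[U]$ is definable from parameters which eventually enter the elementary chain, and pulling $D$ back through $\pi$ forces $C$ to meet $D$. Once Prikry genericity of $C$ is established, we pass to $L[U,C]$ and repeat the covering construction there. Since Prikry forcing preserves all cardinals and adds no bounded subsets of $\kappa$, and since no further obstruction remains (again by no $0^{\dag}$), covering now succeeds for every set of ordinals in $L[U,C]$, yielding case (b).

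The hardest step is the Prikry genericity of $C$: one has to check that the failure of covering produces not merely an $\omega$-cofinal sequence, but exactly one that diagonalizes all dense subsets of $P_U$ in $L[U]$. The rest of the proof is bookkeeping with iterated ultrapowers and elementary chains, where the leverage provided by the nonexistence of $0^{\dag}$ controls both the shape of embeddings of $L[U]$ and the form of the resulting non-covering sequence.
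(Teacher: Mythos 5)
This statement is quoted verbatim from Mitchell's Handbook chapter (it is the Dodd--Jensen covering lemma for $L[U]$); the paper offers no proof of it, so the only thing to compare your proposal against is the actual theorem it cites, and there your sketch has genuine gaps at exactly the points where the theorem is hard. First, the passage from ``every covering attempt fails'' to ``$\pi\restriction(\bar N\cap L[U])$ is a nontrivial elementary embedding into $L[U]$'' is not justified: $\bar N\cap L[U]$ is not in general an initial segment of, or even an elementary substructure of, $L[U]$, and the real argument has to route through fine structure --- condensation for premice, the Dodd--Jensen theory of mice, and comparison --- to turn the collapse of $N\cap L_\theta[U]$ into a structure that can be iterated against $L[U]$. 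Your sketch replaces all of this with ``Kunen's analysis of elementary embeddings of $L[U]$,'' which applies to embeddings of the whole model and does not by itself produce the dichotomy.

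Second, and more seriously, the step you yourself flag as ``the main obstacle'' --- that the obstruction to covering is precisely a Prikry generic sequence --- is asserted, not proved. The standard route is not a direct diagonalization against dense subsets of $P_U$ but Mathias's characterization: an $\omega$-sequence cofinal in $\kappa$ is Prikry generic over $L[U]$ if and only if it is almost contained in every $A\in U$, and one then verifies this almost-containment for the critical sequence arising from the iteration. Your ``reflection argument'' pulling dense sets back through $\pi$ does not engage with this and cannot work as stated, since the dense sets of $P_U$ in $L[U]$ need not appear in the elementary chain in any usable form. Finally, the concluding claim that ``covering now succeeds in $L[U,C]$ since no further obstruction remains'' is circular without a separate argument that a second failure of covering over $L[U,C]$ would again yield $0^{\dag}$. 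None of this is to say the plan is pointed in the wrong direction --- it is the right circle of ideas --- but as written it defers the entire content of the theorem, which is why the paper (reasonably) cites it as a black box rather than proving it.
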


\begin{fact}\label{fact on zero dagger}
(\cite[21.22 Exercise]{Higherinfinite}) \quad The following are equivalent:
\begin{enumerate}[(1)]
  \item $0^{\dag}$ exists.
  \item There is a $\kappa$-model for some $\kappa$ and an elementary embedding from that model to itself with critical point greater than $\kappa$.
\end{enumerate}
\end{fact}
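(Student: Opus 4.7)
The plan is to adapt the classical characterization of $0^{\sharp}$ (which exists iff there is a nontrivial elementary embedding $j:L\to L$) to the $L[U]$ setting. The crucial extra subtlety is that every $\kappa$-model $L[U]$ already admits its canonical internal ultrapower embedding $j_U: L[U]\to L[U]$ with $\text{crit}(j_U)=\kappa$, so the hypothesis ``$\text{crit}(j)>\kappa$'' is exactly what rules out recovering only this trivial embedding and forces genuinely new large-cardinal strength.

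For $(1)\Rightarrow (2)$, I would use the standard theory of $0^{\dag}$ as developed in \cite{Higherinfinite}: its existence produces a closed unbounded proper class $I$ of Silver indiscernibles for $\langle L[U],\in,U\rangle$ lying above $\kappa$, with every element of $L[U]$ definable from finitely many indiscernibles in $I$, finitely many ordinals below $\kappa$, and the predicate $U$. Given any order-preserving injection $\sigma:I\to I$ that is the identity on some initial segment $I\cap\lambda$ (for $\lambda\in I$, $\lambda>\kappa$) and nontrivially shifts everything above $\lambda$, the map $t(\vec{i},\vec{\alpha},U)\mapsto t(\sigma(\vec{i}),\vec{\alpha},U)$ extends uniquely to an elementary $j:L[U]\to L[U]$ with $\text{crit}(j)=\lambda>\kappa$.

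For $(2)\Rightarrow (1)$, given $j:L[U]\to L[U]$ with $\text{crit}(j)=\lambda>\kappa$, I would iterate $j$ through the ordinals to obtain the increasing sequence of critical points $\lambda_0=\lambda<\lambda_1<\lambda_2<\cdots$, all above $\kappa$. By the standard Kunen-style argument, any formula $\varphi(v_1,\ldots,v_n)$ of the language with a symbol for $U$ satisfies $L[U]\models\varphi(\lambda_{i_1},\ldots,\lambda_{i_n})$ for one increasing tuple iff for every such tuple (one applies iterates of $j$ to shift any tuple to any other). This exhibits the $\lambda_i$ as indiscernibles for $\langle L[U],\in,U\rangle$ above $\kappa$. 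The resulting theory is then the candidate $0^{\dag}$, and one verifies that it is a well-founded remarkable E.M.\ blueprint.

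The main obstacle lies in the reverse direction: showing the E.M.\ set obtained is remarkable, i.e.\ that all iterated ultrapowers of $L[U]$ along the putative indiscernibles are well-founded. This is precisely where the hypothesis $\text{crit}(j)>\kappa$ is used: it implies $j(U)=U$ (since $U\subseteq \mathcal{P}(\kappa)\subseteq V_{\kappa+2}\subseteq V_\lambda$, each element of $U$ is fixed by $j$, and so is $U$ as a set in $L[U]$ because $j$ acts on the $L[U]$-power set coherently), so the iterates interact coherently with the measure and well-foundedness is inherited from $L[U]$ itself. Once remarkability is established, the uniqueness of the well-founded remarkable E.M.\ blueprint identifies the theory with $0^{\dag}$.
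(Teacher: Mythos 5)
The paper does not actually prove this fact; it cites it as \cite[21.22 Exercise]{Higherinfinite}, so there is no in-paper proof to compare against. Your direction $(1)\Rightarrow(2)$ is the standard argument and is fine in outline, modulo the usual bookkeeping: the parameters in your Skolem terms should be indiscernibles from the double class $\langle X,Y\rangle$ of Fact \ref{fact on zero dagger two} (whose Skolem hull is the whole $\kappa$-model), and $crit(j)=\lambda$ requires that every ordinal below $\lambda$ lies in the hull of the indiscernibles below $\lambda$, which is the remarkability clause of the blueprint.

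The genuine gap is in $(2)\Rightarrow(1)$, precisely at the point you yourself flag as the main obstacle. Your proposed resolution --- that $crit(j)>\kappa$ gives $j(U)=U$ and hence ``well-foundedness is inherited from $L[U]$ itself'' --- does not work. It is true that $j(U)=U$, and this is needed so that the iterates remain maps from $L[U]$ to $L[U]$; but well-foundedness of the transfinite iteration by the derived $L[U]$-ultrafilter $W=\{A\subseteq\lambda\mid A\in L[U]\wedge\lambda\in j(A)\}$ is a statement about how $W$ sits inside $V$, not a property of $L[U]$. The standard sufficient condition is countable completeness of $W$ in $V$: if a countable $Y\subseteq W$ is an element of $L[U]$, then applying $j$ to $\bigcap Y$ shows $\bigcap Y\in W$; but an arbitrary countable $Y\subseteq W$ formed in $V$ need not belong to $L[U]$, so countable completeness is not automatic, and $j(U)=U$ does not supply it. Some extra device is required. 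The one this paper uses repeatedly in the analogous situations (the proofs of Theorem \ref{cited theorem from Woodin}, Theorem \ref{big thm about u} and Theorem \ref{HP for measurable cn}) is to arrange that the embedding arises from, or restricts to, an elementary hull closed under $\omega$-sequences, which forces the derived ultrafilter to be countably complete; alternatively one runs Kunen's original argument or the premouse-iterability machinery of \cite{Higherinfinite}. Two further points: ``iterate $j$ through the ordinals'' must mean iterating the ultrapower by $W$ (transfinitely composing $j$ itself makes no sense at limit stages, and is exactly where well-foundedness is at stake), and even granting well-founded iterates you still owe the verification that the resulting class of critical points yields a well-founded, remarkable, unbounded E.M.\ blueprint for premice --- that verification is the actual content of the exercise rather than a routine afterthought.
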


\begin{theorem}\label{big thm about u}
Suppose there is an inner model with a measurable cardinal and $L[U]$ is chosen such that $\kappa=crit(U)$ is as small as possible. Suppose $\lambda>\kappa^{+}$ is an $L[U]$-cardinal. Then $SRP^{L[U]}_{<\lambda}(\lambda)$ if and only if $0^{\dag}$ exists.
\end{theorem}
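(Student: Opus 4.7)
The plan is to establish both directions in close analogy with Theorem \ref{characterization of srp above third level}, replacing $L$, $0^{\sharp}$ and countable elementary submodels by $L[U]$, $0^{\dag}$ and elementary submodels of size less than $\lambda$. For the easy direction ``$0^{\dag}$ exists $\Rightarrow SRP^{L[U]}_{<\lambda}(\lambda)$'', I would mimic $(ii)\Rightarrow (iii)$ of Theorem \ref{characterization of srp above third level}: if $X\prec H_{\theta}$ with $\lambda\in X$ and $|X|<\lambda$, then the canonical iterable $(0^{\dag},\lambda+1)$-premouse is definable from $\lambda$ and so lies in $X$; its image under the transitive collapse of $X$ is the $(0^{\dag},\bar{\lambda}+1)$-premouse, which is elementary in $L[U]$ and witnesses that $\bar{\lambda}$ is an $L[U]$-cardinal.

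For ``$SRP^{L[U]}_{<\lambda}(\lambda)\Rightarrow 0^{\dag}$ exists'', the plan is to mimic Theorem \ref{characterizaton of zero sharp thm} and produce a nontrivial elementary $j:L[U]\to L[U]$ with $\operatorname{crit}(j)>\kappa$, from which $0^{\dag}$ exists by Fact \ref{fact on zero dagger}. Let $\theta>\lambda$ witness $SRP^{L[U]}_{<\lambda}(\lambda)$. Build a continuous elementary chain $\langle Z_{\alpha}\mid\alpha<\kappa^{+}\rangle$ of submodels of $H_{\theta}$ with $|Z_{\alpha}|=\kappa$, $Z_{\alpha}\in Z_{\beta}$ for $\alpha<\beta<\kappa^{+}$, and $\kappa\cup\{\lambda,\kappa^{+},U\}\subseteq Z_{0}$, arranging in addition that $Z\cap\kappa^{+}$ has order type $\kappa^{+}$. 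Let $Z=\bigcup_{\alpha<\kappa^{+}}Z_{\alpha}$, so that $|Z|=\kappa^{+}<\lambda$, and let $\pi:N\prec H_{\theta}$ be the inverse of the transitive collapse of $Z$. Then $\kappa\subseteq Z$ and $|Z\cap\kappa^{+}|=\kappa^{+}$, so $\operatorname{crit}(\pi)>\kappa^{+}$. By the $L[U]$-analogue of Proposition \ref{transive collapse of L cardinal}, $\bar{\lambda}$ is an $L[U]$-cardinal; by Convention \ref{convention} this gives $L[U]|\bar{\lambda}\in N$ and hence $\mathcal{P}(\operatorname{crit}(\pi))\cap L[U]\in N$. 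Define $U^{\ast}=\{A\in\mathcal{P}(\operatorname{crit}(\pi))\cap L[U]\mid\operatorname{crit}(\pi)\in\pi(A)\}$.

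The crux is to show $U^{\ast}$ is countably complete; granted this, $\operatorname{Ult}(L[U],U^{\ast})$ is well-founded, and since $\operatorname{crit}(\pi)>\kappa^{+}>\kappa=\operatorname{crit}(U)$ one has $j_{U^{\ast}}(U)=U$, so $\operatorname{Ult}(L[U],U^{\ast})=L[U]$ and we obtain the desired $j:L[U]\to L[U]$ with $\operatorname{crit}(j)=\operatorname{crit}(\pi)>\kappa$. Countable completeness of $U^{\ast}$ follows the template of Lemma \ref{a key lemma on U}: given countable $Y\subseteq U^{\ast}$, pick $\alpha<\kappa^{+}$ with $Y\subseteq j_{\alpha}``N_{\alpha}$ (using $\operatorname{cf}(\kappa^{+})>\omega$) and cover $S:=\mathcal{P}(\operatorname{crit}(\pi))\cap L[U]\cap j_{\alpha}``N_{\alpha}$ by some $T\in N$ with $|T|^{N}=\kappa<\operatorname{crit}(\pi)$; then $\pi(T)=\pi``T$, so $U^{\ast}\cap T\in N$ and $\bigcap(U^{\ast}\cap T)\neq\emptyset$ follow verbatim from the proof of Lemma \ref{a key lemma on U}. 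The main obstacle is obtaining such a cover $T$ inside $N$: by Fact \ref{covering thm for mc}, in case (a) $L[U]$ itself has full covering and $N$ inherits this by elementarity, so $T$ can be found in $L[U]\cap N$ directly; in case (b) only $L[U,C]$ has full covering for a Prikry-generic $C$ over $L[U]$, and I would add $C$ to $Z_{0}$ and apply covering for $L[U,\bar{C}]$ (with $\bar{C}=\pi^{-1}(C)\in N$) to obtain $T\in L[U,\bar{C}]\cap N$, then check that the closure computation for $U^{\ast}\cap T$ survives the presence of this Prikry parameter, since $U^{\ast}$ itself is defined using $L[U]$ alone.
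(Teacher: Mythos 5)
Your proposal follows essentially the same route as the paper's proof: the backward direction via the collapse of the $(0^{\dag},\omega,\lambda+1)$-model, and the forward direction via the Dodd--Jensen covering-lemma case split, an elementary chain whose union collapses to $N$, a countably complete $L[U]$-ultrafilter at $\operatorname{crit}(\pi)$ obtained by covering the relevant countable family by a small $T\in N$, and then Fact \ref{fact on zero dagger}. Two points of comparison. First, your inference from ``$Z\cap\kappa^{+}$ has order type $\kappa^{+}$'' to ``$\operatorname{crit}(\pi)>\kappa^{+}$'' is not valid: a subset of $\kappa^{+}$ of order type $\kappa^{+}$ need not be an initial segment. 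You should either arrange $\kappa^{+}\subseteq Z$ outright (possible, since $|Z|=\kappa^{+}<\lambda$), or note that $\operatorname{crit}(\pi)>\kappa$ --- immediate from $\kappa\subseteq Z_{0}$ --- already suffices both for $\pi(T)=\pi``T$ and for $j_{U^{\ast}}(U)=U$; the paper takes the latter view, working with a hull of size $\kappa$ so that $\operatorname{crit}(\pi)=\bar{\kappa^{+}}$. Second, in case (b) the paper relativizes the entire construction to $L[U,C]$ (the ultrafilter is $\{X\subseteq\eta\mid X\in L[U,C],\ \eta\in\pi(X)\}$, yielding $j:L[U,C]\prec L[U,C]$, which restricts to $L[U]$ since $j(U,C)=(U,C)$), which is cleaner than your variant of keeping $U^{\ast}$ an $L[U]$-ultrafilter while covering by $T\in L[U,C]$; your version can be pushed through (the set $\pi``(U^{\ast}\cap T)=\{B\in\pi(T)\mid \operatorname{crit}(\pi)\in B\ \wedge\ B\in L[U]\}$ still lies in $L[U,C]$ because $L[U]$ is definable there), but this is precisely the step you leave unchecked. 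One incidental advantage of your choice of a chain of length $\kappa^{+}$ is that its cofinality is uncountable in both cases, so capturing a countable $Y$ inside a single $j_{\alpha}``N_{\alpha}$ is uniform, whereas with a chain of length $\kappa$ one must address the fact that $\operatorname{cf}(\kappa)=\omega$ in the Prikry case.
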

\begin{proof}
$(\Rightarrow)$ We assume that $0^{\dag}$ does not exist and  try to get a contradiction. By Fact \ref{covering thm for mc}, we need to discuss two cases.

Case 1: Fact \ref{covering thm for mc}(a) holds. Let $\theta>\lambda$ be the witness regular cardinal for $SRP^{L[U]}_{<\lambda}(\lambda)$. Build an elementary chain $\langle Z_{\alpha}\mid\alpha<\kappa\rangle$ of submodels of $H_{\theta}$ such that for $\alpha<\beta<\kappa, Z_{\alpha}\prec Z_{\beta}\prec H_{\theta}, Z_{\alpha}\in Z_{\beta}, |Z_{\alpha}|=\kappa$ and $\{\kappa^{+}, \lambda\}\cup  tr(\{U\})\subseteq Z_0$.\footnote{In this article, $tr(X)$ stands for the transitive closure of $X$.} Let $Z=\bigcup_{\alpha<\kappa} Z_{\alpha}$. Then $|Z|=\kappa$. Let $\pi: N\cong Z\prec H_{\theta}$ and $\pi_{\alpha}: N_{\alpha}\cong Z_{\alpha}\prec H_{\theta}$ be the inverses of the collapsing maps. Since $Z_{\alpha}\prec Z$, let $j_{\alpha}: N_{\alpha}\prec N$ be the induced embedding. Then $\pi_{\alpha}=\pi\circ j_{\alpha}$ and $N=\bigcup_{\alpha<\kappa} j_{\alpha}``N_{\alpha}$. Let $crit(\pi)=\eta$. Then $\eta>\kappa=\bar{\kappa}$ and since $|Z|=\kappa, \eta\leq \bar{\kappa^{+}}$. So $\eta=\bar{\kappa^{+}}<\bar{\lambda}$. By $SRP^{L[U]}_{<\lambda}(\lambda)$, $\bar{\lambda}$ is an $L[U]$-cardinal. Let $W=\{X\subseteq\eta\mid X\in L[U]$ and $\eta\in\pi(X)$\}. Note that $U=\bar{U}\in N$ and $W\subseteq L_{\bar{\lambda}}[U]\subseteq N$. $W$ is $L[U]$-ultrafilter on $\eta$. Note that $Z\models ``|Z_{\alpha}|=\kappa$" and the image of $Z_{\alpha}$ under the transitive collapse of $Z$ is $j_{\alpha}``N_{\alpha}$. So for $\alpha<\kappa, j_{\alpha}``N_{\alpha}\in N$ and $N\models ``|j_{\alpha}``N_{\alpha}|=\kappa$".

\begin{lemma}\label{a key lemma on U}
$W$ is countably complete.
\end{lemma}
\begin{proof}
Suppose $Y\subseteq W$ and $Y$ is countable. We show that $\bigcap Y\neq\emptyset$. Since $Y\subseteq N$, take $\alpha<\kappa$ large enough such that $Y\subseteq j_{\alpha}``N_{\alpha}$. Let $S=\mathcal{P}(\eta)\cap L[U]\cap j_{\alpha}``N_{\alpha}$.  Note that $\mathcal{P}(\eta)\cap L[U]\in N$ and hence $S\in N$. $N\models |S|\leq\kappa$. Since Fact \ref{covering thm for mc}(a) holds  in $H_{\theta}$ and $N\prec H_{\theta}$, Fact \ref{covering thm for mc}(a) holds in $N$. Take $T\in N$ such that $T\subseteq \mathcal{P}(\eta)\cap L[U], T\supseteq S, T\in L[U]$ and $N\models |T|\leq\kappa$. Since $\eta>\kappa, \pi(T)=\pi``T$. Let $\bar{T}=\{X\in T\mid \eta\in \pi(X)\}$.
\begin{claim}
$\bar{T}\in N$.
\end{claim}
\begin{proof}
Since $N\models |T|\leq\kappa$, there is $h\in N$ such that $h: T\leftrightarrow \gamma$ for some $\gamma<\eta$. Then $\bar{T}=\{X\in T\mid \eta\in \pi``(h^{-1})(h(X))\}$. So $\bar{T}\in N$.
\end{proof}

Note that $\bigcap \bar{T}\neq\emptyset$ since $\pi(\bar{T})=\pi``\bar{T}$ and $\eta\in\bigcap \pi``\bar{T}=\bigcap \pi(\bar{T})=\pi(\bigcap \bar{T})$. Since $Y\subseteq S\subseteq T$ and $Y\subseteq W$, $Y\subseteq \bar{T}$ and hence $\bigcap Y\neq\emptyset$.
\end{proof}

So there exists a nontrivial elementary embedding $j: L[U]\prec L[U]$ with $crit(j)=\eta>\kappa$. By Fact \ref{fact on zero dagger}, $0^{\dag}$ exists. Contradiction.

Case 2: Fact \ref{covering thm for mc}(b) holds. The proof is essentially the same as Case 1 with small modifications (for example, let $tr(\{U,C\})\subseteq Z_0$ and $W=\{X\subseteq\eta\mid X\in L[U,C]$ and $\eta\in\pi(X)$\}).  Since Priky forcing preserves all cardinals, $\bar{\lambda}$ is an $L[U,C]$-cardinal. As in Case 1, we can show that there exists a nontrivial elementary
embedding $j: L[U,C]\prec L[U,C]$. Since $j(U,C)=(U,C), j\upharpoonright L[U]: L[U]\prec L[U]$. $crit(j\upharpoonright L[U])=\eta>\kappa$. So by Fact \ref{fact on zero dagger}, $0^{\dag}$ exists. Contradiction.

$(\Leftarrow)$ Assume $0^{\dag}$ exists. Suppose $\theta>\lambda$ is regular, $X\prec H_{\theta}, |X|<\lambda$ and $\lambda\in X$. We show that $\bar{\lambda}$ is an $L[U]$-cardinal. Since $\lambda\in X$ and $0^{\dag}\in X, \mathcal{M}(0^{\dag},\omega, \lambda+1)\in X$.\footnote{Note that $\mathcal{M}(0^{\dag},\omega, \alpha)$ is the unique transitive $(0^{\dag},\omega,\alpha)$-model. For the notation of $\mathcal{M}(0^{\dag},\omega, \alpha)$, see \cite{Higherinfinite}.} Note that for any $\alpha, \beta\in Ord, \mathcal{M}(0^{\dag},\alpha, \beta)\prec L[U]$. Since $\lambda$ is an $L[U]$-cardinal and $\lambda\in \mathcal{M}(0^{\dag},\omega, \lambda+1), \mathcal{M}(0^{\dag},\omega, \lambda+1)\models \lambda$ is a cardinal. Note that the image of $\mathcal{M}(0^{\dag},\omega, \lambda+1)$ under the transitive collapse of $X$ is $\mathcal{M}(0^{\dag}, \omega,\bar{\lambda}+1)$. So $\mathcal{M}(0^{\dag}, \omega,\bar{\lambda}+1)\models ``\bar{\lambda}$ is a cardinal". Since $\mathcal{M}(0^{\dag}, \omega,\bar{\lambda}+1)\prec L[U],\bar{\lambda}$ is an $L[U]$-cardinal.
\end{proof}

In \cite{Schindler3}, Thoralf R\"{a}sch and Ralf Schindler
introduced the condensation principle $\nabla_{\kappa}$: for any regular cardinal $\theta>\kappa, \{X\prec L_{\theta}\mid |X|<\kappa, X\cap\kappa\in\kappa$ and $L\models o.t.(X\cap \theta)$ is a cardinal\} is stationary. The notion of the strong reflecting property for $L$-cardinals was introduced before the author knew about the work on $\nabla_{\kappa}$ in \cite{Schindler3}. The following theorem summarizes the strength of $\nabla_{\omega_n}$ for $n\in\omega$.

\begin{theorem}\label{thm about remarkble cn}
\begin{enumerate}[(1)]
  \item (\cite[Theorem 2, 4]{Schindler3})\quad The following theories are equiconsistent:
\begin{enumerate}[(a)]
  \item $ZFC\,+\, \nabla_{\omega_1}$.
  \item $ZFC\,+\, \nabla_{\omega_2}$.
  \item $ZFC\,+$ there exists a remarkable cardinal.
\end{enumerate}
  \item \cite[Corollary 12]{Schindler3}\quad For $n\geq 3, \nabla_{\omega_n}$ is equivalent to $0^{\sharp}$ exists.
\end{enumerate}
\end{theorem}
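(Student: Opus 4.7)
The plan is to handle part (2) first, since it closely parallels Theorem \ref{characterization of srp above third level}. For the direction $0^{\sharp} \text{ exists} \Rightarrow \nabla_{\omega_n}$ with $n \geq 3$, I would take any regular $\theta > \omega_n$ and show that \emph{every} $X \prec L_\theta$ with $|X| < \omega_n$ and $X \cap \omega_n \in \omega_n$ works. For such $X$, if $\pi\colon \bar{X} \to X$ is the inverse collapse, then $\mathcal{M}(0^{\sharp}, \theta + 1) \in X$ pulls back to $\mathcal{M}(0^{\sharp}, \bar{\theta} + 1)$, which is elementary in $L$, so $\bar{\theta}$ is an $L$-cardinal. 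Stationarity is trivial. For the converse, assuming $\nabla_{\omega_n}$ for some $n \geq 3$ but $0^{\sharp}$ does not exist, I would adapt the argument of Theorem \ref{characterizaton of zero sharp thm}: $L$ satisfies the Jensen covering property and is rigid, so building a long elementary chain $\langle Z_\alpha \mid \alpha < \omega_n \rangle$ of submodels of $H_\theta$ with $|Z_\alpha| < \omega_n$, using $\nabla_{\omega_n}$ to secure an $X$ whose collapsed ordinal $\bar{\theta}$ is an $L$-cardinal, yields a derived $L$-ultrafilter $U = \{A \subseteq \bar{\omega_n} : A \in L, \bar{\omega_n} \in \pi(A)\}$ that is countably complete by the $L$-covering argument, contradicting rigidity of $L$.

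For part (1), the implication (c) $\Rightarrow$ (a) is essentially Corollary \ref{key coro of above lemma}: if $\kappa$ is remarkable in $L$ and $G$ is $\mathrm{Col}(\omega, <\kappa)$-generic over $L$, then in $L[G]$ the set of $X \prec L_\theta[G]$ condensing remarkably with $o.t.(X \cap \theta)$ an $L$-cardinal is stationary for every regular $\theta$, which directly witnesses $\nabla_{\omega_1}$ in $L[G]$ (where $\omega_1 = \kappa$). The implication (a) $\Rightarrow$ (c) follows the blueprint of Proposition \ref{new non propro}: since the defining property of $\nabla_{\omega_1}$ (transitive collapse data) is downward absolute to $L$, one obtains $L \models \nabla_{\omega_1}$, and then unpacking the definition shows that in $L^{\mathrm{Col}(\omega, <\omega_1)}$ (using that the Levy collapse is stationary-preserving) the stationarity required for $\omega_1$ to be $\theta$-remarkable in $L$ holds for every regular $\theta > \omega_1$.

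The equivalences involving (b) follow by the same template but with $\omega_2$ in place of $\omega_1$, interleaved with the proof of Theorem \ref{strength of omega two}: (c) $\Rightarrow$ (b) is obtained by iterating the Levy collapse below a remarkable $\kappa$ and then a further collapse of a weakly inaccessible above to make $\kappa = \omega_2$ (with $\omega_2$ inaccessible in $L$, so limit of $L$-cardinals), while (b) $\Rightarrow$ (c) combines downward absoluteness of $\nabla_{\omega_2}$ to $L$ with the analogue of Proposition \ref{new non propro} one level up. Monotonicity between (a) and (b) is handled by the analogue of Proposition \ref{compare strong cardinal}.

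The main obstacle I anticipate is the $\nabla_{\omega_n} \Rightarrow 0^{\sharp}$ direction for $n \geq 3$. The argument of Theorem \ref{characterizaton of zero sharp thm} uses \emph{countable} $X$, and here we need $|X| < \omega_n$, so the elementary chain must be of length sufficient that the critical point of the collapse lands exactly at $\bar{\omega_n^+}$ (or the right analogue), ensuring that the derived ultrafilter is on a cardinal above $\omega_1$ and still countably complete. Verifying that the covering property of $L$ delivers countable completeness in this higher-length setup, and tracking that the hypothesis $|X| < \omega_n$ together with $X \cap \omega_n \in \omega_n$ produces a genuine critical point (not a fixed-point collapse), is the delicate technical step.
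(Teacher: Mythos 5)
The first thing to say is that the paper does not prove this theorem at all: it is quoted verbatim from R\"asch--Schindler \cite{Schindler3} (their Theorems 2 and 4 and Corollary 12), so there is no internal proof to compare yours against, and a correct self-contained argument would be genuinely new content relative to this paper. Judged on its own, your sketch has the right flavour in places but contains concrete gaps. The clearest one is in part (1): for (c) $\Rightarrow$ (b) you propose to collapse a remarkable cardinal \emph{with a weakly inaccessible above it}, following Theorem \ref{strength of omega two}. That is a strictly stronger hypothesis than (c), so it cannot establish $\mathrm{Con}(\text{c})\Rightarrow\mathrm{Con}(\text{b})$; indeed the paper stresses, in the paragraph right after this theorem, that the remarkable-plus-inaccessible construction is what yields $SRP^{L}(\omega_2)$, which is \emph{strictly stronger} in consistency than $\nabla_{\omega_2}$. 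The whole point of R\"asch--Schindler's Theorem 4 is that a single remarkable cardinal suffices for $\nabla_{\omega_2}$, which requires a different forcing (collapsing the remarkable to become $\omega_2$) and a different verification. Relatedly, the claimed ``monotonicity between (a) and (b) by the analogue of Proposition \ref{compare strong cardinal}'' is unsupported: $\nabla_{\omega_1}$ and $\nabla_{\omega_2}$ concern submodels of different cardinalities of $L_\theta$ with different trace conditions, and neither implies the other by the argument of Proposition \ref{compare strong cardinal}, which compares two $L$-cardinals under the \emph{same} notion of countable submodel.

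In part (2) there are two further problems. First, your justification of $0^{\sharp}\Rightarrow\nabla_{\omega_n}$ does not typecheck: the structures in $\nabla_{\omega_n}$ satisfy $X\prec L_{\theta}$, and $\mathcal{M}(0^{\sharp},\theta+1)$ is not an element of $L_{\theta}$, so it cannot lie in $X$. (The conclusion is still true and easily repaired: if $0^{\sharp}$ exists then every uncountable $V$-cardinal is a Silver indiscernible, so $L_{\theta}\prec L$, hence the collapse $L_{\bar{\theta}}$ of any $X\prec L_{\theta}$ is elementary in $L$ and $\bar{\theta}=o.t.(X\cap\theta)$ is an $L$-cardinal; note this makes \emph{every} such $X$ work, with no need for the $\mathcal{M}(0^{\sharp},\cdot)$ device.) Second, for the converse your ``elementary chain $\langle Z_{\alpha}\mid\alpha<\omega_{n}\rangle$'' produces a union of size $\omega_{n}$, which violates the constraint $|X|<\omega_{n}$ in $\nabla_{\omega_n}$; moreover $\nabla_{\omega_n}$ only gives a \emph{stationary} set, so you cannot ``secure an $X$'' at the end of a chain you build yourself --- you must intersect the stationary set with a club encoding the internal-approachability or $\omega$-closure needed to run the covering argument for countable completeness of the derived $L$-ultrafilter, and verifying that the relevant set of $X$ really contains a club in $[L_{\theta}]^{<\omega_{n}}$ is precisely the point your sketch leaves open.
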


Now we discuss the relationship between $SRP^{L}(\omega_n)$ and $\nabla_{\omega_n}$ for $n\in\omega$. By Theorem \ref{characterization of srp above third level} and \ref{thm about remarkble cn}, for $n\geq 3, SRP^{L}(\omega_n)$ is equivalent to $\nabla_{\omega_n}$. If $\kappa$ is regular cardinal and $\nabla_{\kappa}$ holds, then $\kappa$ is remarkable in $L$ (cf. \cite[Lemma 7]{Schindler3}). By Proposition \ref{srp of omega first}, $\nabla_{\omega_1}$ implies $SRP^{L}(\omega_1)$ which is strictly weaker.  By Theorem \ref{strength of omega two}, $SRP^{L}(\omega_2)$ does not imply $\nabla_{\omega_2}$ since $\nabla_{\omega_2}$ implies $\omega_2$ is remarkable in $L$. By Theorem \ref{thm about remarkble cn} and \ref{strength of omega two}, the strength of $SRP^{L}(\omega_2)$ is strictly stronger than $\nabla_{\omega_2}$.

In Definition \ref{definition of strong reflecting property and weakly}, we only consider countable elementary submodels of $H_{\kappa}$. Similarly as $\nabla_{\kappa}$ we could also consider uncountable elementary submodels of $H_{\kappa}$. However this does not change the picture. Obviously, $SRP^{L}_{<\omega_1}(\omega_1)$ \text{if{f}} $SRP^{L}(\omega_1)$. By Proposition \ref{transive collapse of L cardinal}, $SRP^{L}_{<\omega_2}(\omega_2)$ \text{if{f}} $SRP^{L}(\omega_2)$. By Theorem \ref{characterizaton of zero sharp thm}, for $n\geq 3, SRP^{L}_{<\omega_n}(\omega_n)$ \text{if{f}} $0^{\sharp}$ exists \text{if{f}} $SRP^{L}(\omega_n)$.

\section{Harrington's Principle $HP(L)$ and its generalization}

In this section, we define the generalized Harrington's Principle $HP(M)$ for any inner model $M$. Considering various known examples of inner models we give particular characterizations of $HP(M)$, while we also show that in some cases this
generalized principle fails.

Recall that for limit ordinal $\alpha>\omega$, $\alpha$ is $x$-admissible if and only if there is no $\Sigma_1(L_{\alpha}[x])$ mapping from an ordinal $\delta<\alpha$ cofinally into $\alpha$ (see \cite[Lemma 7.2]{Constructiblity}).

\begin{definition}\label{def of HP}
Suppose $M$ is an inner model. The Generalized Harrington's Principle $HP(M)$ denotes the following statement: there is a real $x$ such that, for any ordinal $\alpha$, if $\alpha$ is $x$-admissible then $\alpha$ is an $M$-cardinal, i.e., $M\models \alpha$\textrm{ is a cardinal}. $HP(L)$ denotes Harrington's Principle.
\end{definition}

Harrington's principle $HP(L)$ was isolated by Harrington in the proof of his celebrated theorem ``$Det(\Sigma^1_1)$ implies $0^{\sharp}"$ in \cite{Harrington}.

\begin{fact}\label{fact on zero sharp in devlin}
(Essentially \cite{Constructiblity})\quad $(Z_4)$ \quad $L_{\omega_2}$ has an uncountable set of indiscernibles if and only if $0^{\sharp}$ exists.
\end{fact}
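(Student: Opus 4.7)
\emph{Proof plan.} The equivalence splits into two directions; the reverse is straightforward, while the forward one carries the substantive work.

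For the reverse direction, assume $0^{\sharp}$ exists. Then every uncountable $V$-cardinal is a Silver indiscernible for $L$, and for each such $\kappa$ one has $L_{\kappa} \prec L$. Taking $\kappa = \omega_2$ yields $L_{\omega_2} \prec L$, so any set of Silver indiscernibles contained in $\omega_2$ is automatically a set of indiscernibles for $L_{\omega_2}$. The set $\{\omega_{\alpha} : 1 \leq \alpha < \omega_1\}$ provides uncountably many such indiscernibles strictly below $\omega_2$. Everything here is formalizable in $Z_4$, since $\omega_1$, $\omega_2$, and $L_{\omega_2}$ are sets of cardinality at most $\beth_2$ and $0^{\sharp}$ is coded by a real.

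For the forward direction, suppose $I \subseteq \omega_2$ is an uncountable set of indiscernibles for $L_{\omega_2}$. The plan is to extract $0^{\sharp}$ as the Ehrenfeucht--Mostowski blueprint of $I$. Concretely, define
\[
\Sigma = \{\varphi(v_1,\ldots,v_n) : L_{\omega_2} \models \varphi[i_1,\ldots,i_n] \text{ for all increasing tuples } i_1<\cdots<i_n \text{ from } I\},
\]
which is a set of G\"odel-coded formulas, and hence coded by a real. By the well-known characterization of $0^{\sharp}$ as the unique well-founded remarkable E.M.\ blueprint, it suffices to show that $\Sigma$ is well-founded and remarkable. Well-foundedness of the term models $\mathcal{M}(\Sigma,\lambda)$ reduces to the countable case, since ill-foundedness of an $\mathcal{L}$-structure descends to a countable elementary submodel; and for countable $\lambda$ one injects $\lambda$ order-preservingly into $I$, realising $\mathcal{M}(\Sigma,\lambda)$ as a substructure of the transitive set $L_{\omega_2}$.

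The main obstacle is \emph{remarkability}: the assertion that whenever a Skolem term $\tau$ satisfies $\Sigma \vdash \tau(v_1,\ldots,v_n) < v_{k+1}$, the $\Sigma$-value of $\tau$ in fact depends only on $v_1,\ldots,v_k$. Unlike well-foundedness, this is not a direct consequence of indiscernibility alone, and this is exactly where uncountability of $I$ is used: by pushing a putative failure against a pigeonhole/Fodor-style bound on the uncountable set $I$, one derives a contradiction with the varying tail assignments. This delicate step is the technical heart of the proof, carried out in the standard Silver machinery reviewed in the cited reference.
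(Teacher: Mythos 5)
The paper offers no argument for this Fact at all --- it is stated with a pointer to Devlin --- so the only meaningful comparison is with the classical Silver argument you are invoking, and there your outline has two concrete problems. First, in the easy direction your witness set $\{\omega_{\alpha} : 1 \leq \alpha < \omega_1\}$ is wrong: except for $\omega_1$ these cardinals are $\geq \omega_2$, so the set is not contained in $\omega_2$, and in $Z_4$ (where every set has cardinality $\leq \beth_2$) the cardinals $\omega_3, \omega_4, \dots$ do not even exist. The correct witness is $I \cap \omega_1$ (or $I \cap \omega_2$), where $I$ is the class of Silver indiscernibles, which is closed unbounded in every uncountable cardinal; your observation that $L_{\omega_2} \prec L$ transfers indiscernibility down to $L_{\omega_2}$ is the right mechanism, but the set you feed into it must actually lie below $\omega_2$.

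The more serious gap is in the forward direction. You reduce the problem to showing that $\Sigma$, the Ehrenfeucht--Mostowski blueprint of \emph{the given} $I$, is well-founded and remarkable. That intermediate claim is false in general: an arbitrary uncountable set of indiscernibles for $L_{\omega_2}$ need not have an unbounded (cofinal) blueprint --- a Skolem term applied to elements of $I$ can output an ordinal above all of $I$ --- and need not have a remarkable one, and no pigeonhole or Fodor argument on the fixed set $I$ will prove remarkability of its own blueprint. (Your sketch also silently drops unboundedness, which is part of what characterizes $0^{\sharp}$ unless one builds it into ``remarkable.'') Silver's proof does not establish remarkability of the given blueprint; it produces a \emph{different} well-founded, unbounded, remarkable E.M.\ set by a minimization argument --- among all uncountable indiscernible sets (for a suitably collapsed hull) one chooses one whose associated sequence of term values is least in an appropriate well-ordering, and shows that a failure of remarkability for the minimal choice would yield a strictly smaller competitor. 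Your outline needs to be restructured around that existence-via-minimization scheme rather than around the blueprint of the $I$ you started with; and since the whole point of the paper's formulation is the $(Z_4)$ qualifier, you should also check that this minimization (which manipulates only objects of size $\leq \omega_2$ and reals coding blueprints) survives in $Z_4$, something your plan addresses only for the easy direction.
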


\begin{theorem}\label{cited theorem from Woodin}
$(Z_4)$ \quad  The following are equivalent:\footnote{In \cite{YongCheng2}, we define $0^{\sharp}$ as the minimal iterable mouse and prove in $Z_4$ that $HP(L)$ is equivalent to $0^{\sharp}$ exists. Theorem \ref{cited theorem from Woodin} proves that these two definitions of $0^{\sharp}$ are equivalent in  $Z_4$.}
\begin{enumerate}[(1)]
  \item $HP(L)$.
  \item $L_{\omega_2}$ has an uncountable set of indiscernibles.
  \item $0^{\sharp}$ exists.
\end{enumerate}
\end{theorem}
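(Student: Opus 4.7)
The equivalence of (2) and (3) is handed to us by Fact~\ref{fact on zero sharp in devlin}, so the real work is to establish (1) $\Leftrightarrow$ (3). I plan to do this by proving (3) $\Rightarrow$ (1) directly and (1) $\Rightarrow$ (2), then appealing again to Fact~\ref{fact on zero sharp in devlin} for (2) $\Rightarrow$ (3).

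For (3) $\Rightarrow$ (1), I would take $x = 0^{\sharp}$ as the witness for $HP(L)$. Let $C$ denote the club class of Silver indiscernibles; every $\iota \in C$ is inaccessible in $L$, hence an $L$-cardinal, and every limit of members of $C$ is an $L$-cardinal as well. It therefore suffices to show that any $0^{\sharp}$-admissible ordinal $\alpha > \omega$ either belongs to $C$ or is a limit of elements of $C$. Suppose toward a contradiction that $\alpha$ is $0^{\sharp}$-admissible and $C \cap \alpha$ is bounded in $\alpha$, with largest element $\iota < \alpha$. Pick the next Silver indiscernible $\iota' \geq \alpha$. The standard theory of $0^{\sharp}$ gives that every ordinal in the interval $[\iota, \iota')$ is of the form $\tau(\vec{c}, \iota)$ for a Skolem term $\tau$ and a finite increasing tuple $\vec{c}$ from $C \cap \iota$, read off uniformly from $0^{\sharp}$. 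Hence one can define $\Sigma_1$ over $L_{\alpha}[0^{\sharp}]$ a cofinal map from some $\delta \leq \iota < \alpha$ into $\alpha$, contradicting the admissibility of $L_{\alpha}[0^{\sharp}]$.

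For (1) $\Rightarrow$ (2), assume $HP(L)$ with witness real $x$. Let $A = \{\alpha : \omega < \alpha < \omega_2 \text{ and } \alpha \text{ is } x\text{-admissible}\}$. Standard bounds on admissibles show $|A| = \omega_2$, and by $HP(L)$ every element of $A$ is an $L$-cardinal. I aim to extract from $A$ an uncountable set of indiscernibles for $L_{\omega_2}$. The plan is to perform a Silver-style minimization: for each formula $\varphi(v_1, \ldots, v_n)$, any potential counterexample to indiscernibility gives two increasing tuples $\vec{\alpha}, \vec{\beta}$ from $A$ with $L_{\omega_2} \models \varphi(\vec{\alpha}) \wedge \neg\varphi(\vec{\beta})$; by selecting the $L$-least such pair, the entries of the tuples become $L$-definable from strictly smaller ordinals in a uniform $\Sigma_1$ way. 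This definability, combined with the $x$-admissibility of the $\alpha_i$ (which forbids certain $\Sigma_1$-definable cofinal collapses in $L_{\alpha_i}[x]$) and the fact that each $\alpha_i$ is an $L$-cardinal, forces a contradiction. Intersecting the resulting homogeneous subsets of $A$ over the countably many formulas yields an uncountable set of $L_{\omega_2}$-indiscernibles, which is exactly (2); Fact~\ref{fact on zero sharp in devlin} then upgrades this to (3).

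The main obstacle is the Silver-style minimization in (1) $\Rightarrow$ (2): one must carefully orchestrate three pieces of structure — $L$-minimality of a hypothetical counterexample, $L$-cardinality of each $\alpha_i$ provided by $HP(L)$, and $x$-admissibility of each $\alpha_i$ — to force the contradiction that blocks any element of the counterexample tuples from being $\Sigma_1$-definable over $L_{\alpha_i}[x]$ from strictly smaller ordinals. The (3) $\Rightarrow$ (1) direction is comparatively routine given the standard Silver-indiscernible machinery, though the interval argument showing that ordinals strictly between consecutive Silver indiscernibles fail to be $0^{\sharp}$-admissible still requires care in verifying that the Skolem-term description of such ordinals actually materialises as a $\Sigma_1$ cofinal map over $L_{\alpha}[0^{\sharp}]$.
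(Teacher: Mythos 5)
Your handling of $(2)\Leftrightarrow(3)$ via Fact~\ref{fact on zero sharp in devlin} and your outline of $(3)\Rightarrow(1)$ (every $0^{\sharp}$-admissible ordinal is a Silver indiscernible or a limit of such, since every ordinal below the next indiscernible is a Skolem-term value in indiscernibles below it, which would yield a $\Sigma_1$ cofinal collapse over $L_{\alpha}[0^{\sharp}]$ if the indiscernibles were bounded in $\alpha$) both match what the paper does or cites. The problem is $(1)\Rightarrow(2)$: your ``Silver-style minimization'' over the set $A$ of $x$-admissibles below $\omega_2$ does not work as described. To obtain a subset of $A$ homogeneous for even a single formula $\varphi(v_1,\dots,v_n)$ you are implicitly invoking a partition relation of the form $\omega_2\rightarrow(\omega_1)^{n}_{2}$ on $A$, and nothing in the hypotheses supplies one; Silver's original minimization runs inside the Erd\H{o}s partition property of $\kappa(\omega_1)$, which is exactly the large-cardinal input you do not have here. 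Moreover, the step ``selecting the $L$-least counterexample pair makes the entries $L$-definable from strictly smaller ordinals'' is unjustified: the $L$-least pair witnessing $\varphi(\vec{\alpha})\wedge\neg\varphi(\vec{\beta})$ is definable from $\varphi$ and $\omega_2$, not from ordinals below its entries, and in any case being definable in $L_{\omega_2}$ from smaller ordinals contradicts neither $x$-admissibility nor being an $L$-cardinal (consider ``the least $L$-cardinal above $\beta$''). So no contradiction is actually forced, and the claimed homogeneous sets never materialize.

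The missing idea is an elementary embedding. The paper's proof of $(1)\Rightarrow(2)$ first uses $HP(L)$ to manufacture a nontrivial $e\colon L_{\omega_2}\prec L_{\omega_2}$: working in $L[a]$, take an $a$-admissible $\eta>\omega_2$ and $N\prec L_{\eta}[a]$ with $|N|=\omega_1$, $\omega_2\in N$ and $N$ closed under $\omega$-sequences; the transitive collapse $L_{\theta}[a]$ of $N$ has $\theta$ an $L$-cardinal by $HP(L)$, so $U=\{X\subseteq\kappa\mid X\in L\wedge\kappa\in j(X)\}$ (where $\kappa$ is the critical point of the anticollapse $j$) is an $L$-ultrafilter with $\mathcal{P}(\kappa)\cap L\subseteq L_{\theta}$, and the $\omega$-closure of $L_{\theta}[a]$ makes the ultrapower of $L_{\omega_2}$ by $U$ well-founded. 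A second application of $HP(L)$, to collapses of submodels of $L_{\eta}[a]$ containing $\omega_1$, gives a club in $\omega_2$ of regular $L$-cardinals fixed by $e$; the Kunen-style argument of \cite[Theorem 18.20]{Jech} (iterated fixed-point clubs $C_{\nu}$ and Skolem hulls of $\kappa\cup C_{\nu}$) then produces the $\omega_1$ indiscernibles $\kappa_{\nu}=i_{\nu}(\kappa)$. Without such an embedding or a partition property there is no known route from ``uncountably many $x$-admissible $L$-cardinals'' to indiscernibles, so your proposal has a genuine gap at its central step.
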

\begin{proof}
Note that in $Z_2, 0^{\sharp}$ implies $HP(L)$ since any $0^{\sharp}$-admissible ordinal is an $L$-cardinal. It suffices to show that $(1)\Rightarrow (2)$. Let $a$ be the witness real for $HP(L)$. We work in $L[a]$. Pick $\eta>\omega_2$  and $N$ such that $\eta$ is $a$-admissible, $N\prec L_{\eta}[a],\omega_2\in N, |N|=\omega_1$ and $N$ is closed under $\omega$-sequences. Let $j: L_{\theta}[a]\cong N\prec L_{\eta}[a]$ be the inverse of the collapsing map and $\kappa=crit(j)$. By $HP(L), \theta$ is an $L$-cardinal.  Define $U=\{X\subseteq \kappa\mid X\in L\wedge\kappa\in j(X)\}$. Note that $(\kappa^{+})^{L}\leq\theta<\omega_2$ and $U\subseteq L_{\theta}$ is an $L$-ultrafilter on $\kappa$. Do the ultrapower construction for $\langle L_{\omega_2}, \in, U\rangle$. Since $L_{\theta}[a]$ is closed under $\omega$-sequences,$L_{\omega_2}/U$ is well founded and hence we get a nontrivial elementary embedding $e: L_{\omega_2}\prec L_{\omega_2}$ with $crit(e)=\kappa$.

Now we show that there exists a club on $\omega_2$ of regular $L$-cardinals. Suppose $X\prec L_{\eta}[a], \omega_1\subseteq X$ and $\omega_2\in X$. The transitive collapse of $X$ is $L_{\bar{\eta}}[a]$ for some $\bar{\eta}$. Since $L_{\eta}\models \omega_2$ is a regular cardinal, $L_{\bar{\eta}}\models \bar{\omega_2}$ is a regular cardinal. By $HP(L)$, $\bar{\eta}$ is an $L$-cardinal and hence $\bar{\omega_2}$ is a regular $L$-cardinal. Since $\omega_1\subseteq X$, $\bar{\omega_2}=X\cap \omega_2$.  We have shown that if $X\prec L_{\eta}[a], \omega_1\subseteq X$ and $\omega_2\in X$, then $X\cap \omega_2=\bar{\omega_2}$ is a regular $L$-cardinal. So there exists a club on $\omega_2$ of regular $L$-cardinals. Let $D$ be such a club such that $D\cap(\kappa+1)=\emptyset$.

\begin{claim}\label{equlity claim}
For any $\alpha\in D, e(\alpha)=\alpha$.
\end{claim}
\begin{proof}
Suppose $\alpha\in D$ and $f\in L_{\omega_2}$ where $f: \kappa\rightarrow\alpha$. Since $\alpha>\kappa$ is a regular $L$-cardinal, $f$ is bounded by some $\eta<\alpha$. So $[f]<[c_{\eta}]$. Hence $e(\alpha)=lim_{\beta\rightarrow\alpha} e(\beta)$. If $\beta<\alpha$, then $|e(\beta)|\leq (|\beta^{\kappa}|)^{L}\leq\alpha$. So $e(\alpha)=\alpha$.
\end{proof}

We define a sequence $\langle C_\alpha: \alpha<\omega_1\rangle$ as follows. Let $C_{0}=D$.
For any $\nu<\omega_1, C_{\nu+1}=\{\mu\in C_{\nu}\mid \mu$ is the $\mu$-th element of $C_{\nu}$ in the increasing enumeration of $C_{\nu}$\}. If $\nu\leq\omega_1$ is a limit ordinal, $C_{\nu}=\bigcap_{\beta<\nu} C_{\beta}$. Note that $C_{\nu}$ is a club on $\omega_2$ for all $\nu\leq\omega_1$. By Claim \ref{equlity claim}, for $\nu\leq\omega_1, e\upharpoonright C_{\nu}=id$. Now we will find $\omega_1$-many indiscernibles for $(L_{\omega_2}, \in)$. The rest of the argument essentially follows
from \cite[Theorem 18.20]{Jech}.

For each $\nu<\omega_1$, let $M_{\nu}$ be the Skolem hull of $\kappa\cup C_{\nu}$ in $L_{\omega_2}$. The transitive collapse of $M_{\nu}$ is $L_{\omega_2}$. Let $i_{\nu}: L_{\omega_2}\cong M_{\nu}\prec L_{\omega_2}$ be the inverse of the collapsing map and  $\kappa_{\nu}=i_{\nu}(\kappa)$. By \cite[Lemma 18.24,18.25, 18.26]{Jech}, $\{\kappa_{\nu}\mid \nu<\omega_1\}$ is a set of indiscernibles for $L_{\omega_2}$.\footnote{Note that the proof of \cite[Theorem 18.20]{Jech}, as opposed to the proof of Theorem \ref{cited theorem from Woodin} above, is not done in $Z_4$.}
\end{proof}

\begin{theorem}\label{main theorem by chengyong}
(\cite{YongCheng2})\quad $Z_3\, +\, HP(L)$ does not imply $0^{\sharp}$ exists.
\end{theorem}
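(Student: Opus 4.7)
The plan is to obtain a model of $Z_3 + HP(L)$ in which $0^{\sharp}$ does not exist by running the forcing construction that underlies Theorem \ref{strength of omega two} over $L$ itself rather than over an arbitrary ground model. Since remarkability is downward absolute to $L$, the consistency of ``there exists a remarkable cardinal with a weakly inaccessible cardinal above it'' is equivalent to the consistency of the same statement together with $V=L$; in particular we may work in a ground model in which $V=L$ and, consequently, $0^{\sharp}$ does not exist.

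In this ground model I would first apply the forcing of \cite[Section 3.1]{YongCheng1} to obtain a generic extension in which the set $S=\{\omega_1\leq\alpha<\omega_2 \mid \alpha \text{ is an } L\text{-cardinal}\}$ is a club and $SRP^{L}(\alpha)$ holds for every $\alpha\in S$. Following \cite[Sections 3.2--3.4]{YongCheng1}, I would then force further over this intermediate extension to produce a model $N$ satisfying $Z_3 + HP(L)$. These are precisely the two steps already invoked in the proof of Theorem \ref{strength of omega two}, so no new forcing design is required here.

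The decisive additional observation is that the entire iteration is a set forcing over $L$, so the final model has the form $N = L[G]$ for some set-generic filter $G$. By the classical absoluteness of $0^{\sharp}$ across set forcing, $0^{\sharp}$ cannot exist in $L[G]$ unless it already exists in the ground model $L$; but $0^{\sharp}\notin L$. Therefore $N\models Z_3 + HP(L) + \neg (0^{\sharp}\text{ exists})$, which yields the desired consistency.

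The main obstacle is concentrated in the forcing construction imported from \cite{YongCheng1}: arranging $SRP^{L}(\omega_2)$ together with a real witnessing $HP(L)$, while simultaneously trimming cardinalities to obtain a model of $Z_3$, is delicate. Granting that construction, the present theorem is a straightforward corollary, its novelty compared with Theorem \ref{strength of omega two} being the simple observation that the whole iteration can, and must, be carried out over $L$ itself in order to prevent the accidental introduction of $0^{\sharp}$.
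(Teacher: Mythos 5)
The paper does not prove this theorem; it is quoted from \cite{YongCheng2}, where the model of $Z_3+HP(L)$ without $0^{\sharp}$ is produced by \emph{class} forcing over $L$ from a single remarkable cardinal (this is what Theorem \ref{result on class forcing} records, and it is the optimal hypothesis, since it matches the consistency strength of $Z_3+HP(L)$ exactly). Your route is genuinely different: you go through $SRP^{L}(\omega_2)$ and the set-forcing construction underlying Theorem \ref{strength of omega two}, and you pay for this with the strictly stronger hypothesis ``remarkable with a weakly inaccessible above.'' What you buy in exchange is that the final universe is a \emph{set}-generic extension of $L$, so the classical fact that set forcing cannot add $0^{\sharp}$ applies immediately, whereas in the class-forcing approach one must argue separately (e.g.\ via factoring through set-sized initial segments) that no real of the extension is $0^{\sharp}$. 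As a proof of the literal non-implication your argument is correct, but it establishes a weaker relative-consistency statement than the cited one.

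Two small points should be tightened. First, the model of $Z_3+HP(L)$ is not the full extension $L[G]$ (which satisfies $ZFC$ with Power Set), but a set model inside it, of the form $H_{\kappa}^{L[G]}$ or $L_{\theta}[x]$; you still need the routine observation that ``$0^{\sharp}$ exists'' is absolute between that set model and $L[G]$ (the relevant statement about a real being a well-founded remarkable E.M.\ set is $\Pi^1_2$, so Shoenfield absoluteness suffices once the set model contains all reals and $\omega_1$). Second, your reduction to a ground model with $V=L$ is fine (remarkability and weak inaccessibility are downward absolute to $L$), but it is worth saying explicitly that the two-step forcing of \cite[Sections 3.1--3.4]{YongCheng1} is carried out over $L$ and is set-sized in its entirety, since that is exactly what the $0^{\sharp}$-preservation argument rests on.
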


By a similar argument as in Theorem \ref{cited theorem from Woodin} we can show from $Z_3\, +\, HP(L)$ that there exists a nontrivial elementary embedding $j: L_{\omega_1}\prec L_{\omega_1}$ and there is a club $C\subseteq \omega_1$ of regular $L$-cardinals. However, by Theorem \ref{main theorem by chengyong}, from these we can not prove in $Z_3$ that $0^{\sharp}$ exists.

Note that Theorem 3.3 still holds if we replace the term ``$L$-cardinal" with
any large cardinal notion compatible with $L$ in the definition of $HP(L)$. This is because the Silver indiscernibles can have any large cardinal
property compatible with $L$.\footnote{Examples of large cardinal notions compatible with $L$: inaccessible cardinal,reflecting cardinal, Mahlo cardinal, weakly compact, indescribable cardinal, unfoldable cardinal,  subtle cardinal, ineffable cardinal, 1-iterable cardinal, remarkable cardinal, 2-iterable cardinal and $\omega$-Erd$\ddot{o}$s cardinal.}

\begin{fact}\label{fact on zero dagger two}
(\cite[Theorem 21.15]{Higherinfinite}) \quad The following are equivalent:
\begin{enumerate}[(1)]
  \item $0^{\dag}$ exists.
  \item For every uncountable cardinal $\kappa$ there is a $\kappa$-model and a double class $\langle X,Y\rangle$ of indiscernibles for it such that: $X\subseteq\kappa$ is closed unbounded, $Y\subseteq Ord\setminus(\kappa+1)$ is a closed unbounded class, $X\cup\{\kappa\}\cup Y$ contains every uncountable cardinal and the Skolem hull of $X\cup Y$ in the $\kappa$-model is again the model.
\end{enumerate}
\end{fact}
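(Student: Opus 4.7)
The plan is to mimic the classical theory of $0^{\sharp}$, but now with two classes of indiscernibles reflecting the fact that $L[U]$-models carry a distinguished measurable cardinal $\kappa$. The forward direction builds the $\kappa$-model from the blueprint $0^{\dag}$; the backward direction extracts the blueprint from the supposed model.

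For $(1)\Rightarrow(2)$: Recall that $0^{\dag}$ is (equivalent to) a well-founded remarkable E.M.\ set in a two-sorted language with constants $\dot c_0<\dot c_1<\cdots$, meant to sit below the critical point, and $\dot d_0<\dot d_1<\cdots$, meant to sit above it, designed to generate iterable $L[U]$-style premice. Given an uncountable cardinal $\kappa$, I would pick any club $X\subseteq\kappa$ of order type $\kappa$ that contains every uncountable cardinal below $\kappa$, together with a closed unbounded class $Y\subseteq\mathrm{Ord}\setminus(\kappa+1)$ that contains every uncountable cardinal greater than $\kappa$. Interpret the $\dot c$'s along $X$ and the $\dot d$'s along $Y$, and form the Skolem hull using the canonical Skolem functions coded in $0^{\dag}$. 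Remarkability guarantees the hull collapses to a transitive $\kappa$-model $L[U]$, well-foundedness guarantees that this is a genuine premouse, and the construction yields $X\cup\{\kappa\}\cup Y$ as indiscernibles with the Skolem hull property required in (2).

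For $(2)\Rightarrow(1)$: Fix the $\kappa$-model $L[U]$ and the double class $\langle X,Y\rangle$ from (2). Let $\Sigma$ be the set of formulas $\varphi(\dot c_{i_1},\ldots,\dot c_{i_m},\dot d_{j_1},\ldots,\dot d_{j_n})$ such that $L[U]\models\varphi(x_1,\ldots,x_m,y_1,\ldots,y_n)$ for every (equivalently, some) strictly increasing choice of $x_k\in X$ and $y_\ell\in Y$; this is well-defined precisely by the indiscernibility hypothesis. The Skolem hull clause in (2) says that $L[U]$ is generated by these indiscernibles, so $\Sigma$ is a complete E.M.\ set. Using that $U$ is normal and that one can iterate the ultrapower cofinally through $\mathrm{Ord}$, one shows that $\Sigma$ is remarkable, i.e.\ closed under the shift conditions that link the lower and upper indiscernibles at the fixed point $\kappa$. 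Well-foundedness of $\Sigma$ is immediate because any model built from $\Sigma$ embeds into the ambient $L[U]$. The resulting $\Sigma$ is then the desired $0^{\dag}$.

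The main obstacle is the remarkability verification in the $(2)\Rightarrow(1)$ direction: one must show not merely that the indiscernibles generate a single $\kappa$-model, but that the blueprint $\Sigma$ codes an entire iteration strategy, so that finite initial segments of the upper indiscernibles $Y$ can be shifted to produce new iterates while fixing the lower structure below $\kappa$. This is where the clause ``$X\cup\{\kappa\}\cup Y$ contains every uncountable cardinal'' is essential, since it supplies enough closure to iterate $U$ cofinally through the ordinals and recover the full iteration tree from the finitely-supported type of the indiscernibles. The careful bookkeeping of how $\kappa$ sits between the two blocks and remains the critical point of each induced elementary embedding follows Kunen's analysis of iterated ultrapowers by a normal ultrafilter, and is the only part that genuinely goes beyond a routine adaptation of the classical $0^{\sharp}$ argument.
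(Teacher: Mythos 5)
First, a remark on the comparison itself: the paper does not prove this statement at all --- it is quoted verbatim as \cite[Theorem 21.15]{Higherinfinite} --- so your attempt has to be judged against the standard textbook argument rather than against anything in the text.

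The genuine gap is in your direction $(1)\Rightarrow(2)$. You propose to ``pick any club $X\subseteq\kappa$ of order type $\kappa$ that contains every uncountable cardinal below $\kappa$'' (and similarly $Y$ above $\kappa$) and then to interpret the constants $\dot c_i$ along $X$ and the $\dot d_j$ along $Y$. This cannot work: the indiscernibles of the term model generated by $0^{\dag}$ are determined by the blueprint and by the order types chosen, not prescribable in advance. An arbitrary club of order type $\kappa$ --- say the class of limit ordinals below $\kappa$ --- contains ordinals that are not even cardinals of the resulting $\kappa$-model, whereas every member of a class of indiscernibles for $L[U]$ must be a cardinal, indeed inaccessible and much more, in $L[U]$. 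The entire content of this implication, the analogue of Silver's theorem for $0^{\sharp}$, is that the \emph{canonical} generating classes of indiscernibles of $\mathcal{M}(0^{\dag},\kappa,\mathrm{Ord})$ are closed unbounded and contain every uncountable cardinal; this is proved from the remarkability and unboundedness clauses of the E.M.\ set (every ordinal of the model below an indiscernible is a Skolem term in finitely many indiscernibles below it together with finitely many above, whence the indiscernibles are cofinal in, and eventually equal to, the uncountable cardinals). That argument is entirely absent from your sketch. Moreover your division of labor is garbled: well-foundedness, not remarkability, is what makes the hull collapse to a transitive model, while remarkability is precisely what controls where the indiscernibles land; and you never address why $\kappa$ itself ends up as the critical point of $U$ in the $\kappa$-model.

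Your direction $(2)\Rightarrow(1)$ is closer to the mark: deriving the two-sorted E.M.\ set $\Sigma$ from the given double class and observing that well-foundedness is inherited from the ambient $L[U]$ is correct. But remarkability of $\Sigma$ is verified by a term-shifting argument using the Skolem-hull clause and the fact that $X\cup\{\kappa\}\cup Y$ contains all uncountable cardinals (so that Skolem terms in higher indiscernibles are trapped below the next indiscernible), not by ``iterating the ultrapower cofinally through the ordinals''; as written that step is an appeal to a mechanism that does not actually produce the required syntactic conditions.
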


\begin{fact}\label{condensation for L[U]}
(\cite[Lemma 1.7]{begininnermodel})\quad Suppose that $A$ is a set, $X\prec L_{\alpha}[A]$ where $\alpha\in Ord\cup \{Ord\}$ and the transitive closure of $A\cap L_{\alpha}[A]$ is contained in $X$. Then $X\cong L_{\alpha^{\prime}}[A]$ for some $\alpha^{\prime}\leq\alpha$.
\end{fact}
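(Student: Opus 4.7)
The plan is to adapt the standard L\'evy--Mostowski condensation argument for $L$ to the $A$-relativized hierarchy. Let $\pi: Y \cong X$ be the inverse of the Mostowski collapse, so that $Y$ is transitive and $\pi$ realizes an elementary embedding of $Y$ into $L_{\alpha}[A]$; set $\alpha' = Y \cap Ord$, which is an ordinal $\leq \alpha$. The goal is to show $Y = L_{\alpha'}[A]$.

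The first step uses the hypothesis to pin down how $A$ sits inside $Y$. Because $tr(A \cap L_\alpha[A]) \subseteq X$ and the Mostowski collapse is the identity on every transitive subset of $X$, every element of $A \cap L_\alpha[A]$ is fixed by the collapse and hence lies in $Y$. A short bookkeeping argument then yields $A \cap Y = A \cap L_\alpha[A]$, so the predicate $A$ that $Y$ will use is exactly the one inherited from $L_\alpha[A]$.

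The second step invokes the internal structure of $L_\alpha[A]$: the function $\beta \mapsto L_\beta[A]$ is uniformly $\Delta_1$-definable (with $A$ consulted only as a predicate at levels already constructed), and $L_\alpha[A]$ internally satisfies ``every set lies in some $L_\beta[A]$''. By elementarity this same statement holds in $Y$. Combined with the first step, the internally computed hierarchy $L_\beta[A]^Y$ for $\beta < \alpha'$ agrees with the external $L_\beta[A]$, because the inductive definition reads $A$ only at previously constructed levels where the two versions of $A$ coincide.

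Putting these together, every element of $Y$ belongs to some $L_\beta[A]$ with $\beta < \alpha'$, and conversely each such $L_\beta[A]$ lies in $Y$ (it is definable in $X$ from the ordinal $\beta$ together with parameters from $A \cap L_\alpha[A]$, all of which are fixed by the collapse). Hence $Y = L_{\alpha'}[A]$, as required. The main obstacle is the verification that the internal and external $L[A]$-hierarchies agree up to $\alpha'$; this is precisely where the transitive-closure hypothesis is essential, since without it the collapse could distort elements of $A$ and the two hierarchies would diverge at the first level where such a moved element is consulted by the definition of $L_{\beta+1}[A]$.
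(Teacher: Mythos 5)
The paper offers no proof of this Fact; it is quoted verbatim from Mitchell's \emph{Beginning Inner Model Theory} (Lemma 1.7). Your argument is the standard relativized condensation proof --- collapse $X$, observe that the transitive-closure hypothesis forces the collapse to fix $A\cap L_{\alpha}[A]$ pointwise so the collapsed structure carries the correct predicate, and then use the uniform $\Sigma_1$ definability of the hierarchy together with $V=L[A]$ to identify the internal and external levels --- and it is correct, with the right emphasis on where the transitive-closure hypothesis enters. The one point your write-up does not literally cover is successor $\alpha$, where $L_{\alpha}[A]$ does not satisfy ``every set lies in some $L_{\beta}[A]$''; this requires a separate (standard) argument, but it is irrelevant to the paper's applications, where $\alpha$ is always admissible or equal to $Ord$.
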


\begin{fact}\label{fact on zero dagger lage cardinal}
(Folklore)\quad Suppose $0^{\dag}$ exists, $L[U]$ is the unique $\kappa$-model and $\langle X,Y\rangle$ is the double class of indiscernibles for $L[U]$ as in Fact \ref{fact on zero dagger two}. If $\alpha\leq\kappa$ is $0^{\dag}$-admissible, then $X$ is unbounded in $\alpha$, and if $\alpha>\kappa$ is $0^{\dag}$-admissible, then $Y$ is unbounded in $\alpha$.\footnote{I would like to thank W.Hugh Woodin and Sy Friedman for pointing out this fact to me. The proof of this fact is essentially similar as the proof of the following standard fact: if $0^{\sharp}$ exists, $I$ is the class of Silver indiscernibles and $\alpha$ is $0^{\sharp}$-admissible, then $I$ is unbounded in $\alpha$ (see \cite[Theorem 4.3]{SyFriedman}).}
\end{fact}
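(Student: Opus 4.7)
The strategy is to adapt the classical proof that the Silver indiscernibles $I$ are unbounded in every $0^{\sharp}$-admissible ordinal (\cite[Theorem 4.3]{SyFriedman}) to the two-class setting of $0^{\dag}$. The two clauses are symmetric under exchanging the roles of $X$ and $Y$, so I focus on the first: assuming $\alpha\leq\kappa$ is $0^{\dag}$-admissible, show that $X$ is unbounded in $\alpha$. Suppose for contradiction that $\beta:=\sup(X\cap\alpha)<\alpha$; the goal is to $\Sigma_1(L_\alpha[0^{\dag}])$-define a function with domain in $L_\alpha[0^{\dag}]$ whose range is cofinal in $\alpha$, contradicting admissibility.

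The key ingredient is a term-reduction lemma for the double class of indiscernibles. By indiscernibility of $Y$ over $X\cup\{\kappa\}$, whenever a Skolem term value $t^{L[U]}(\vec{x},\kappa,\vec{y})$ is strictly below $\min\vec{y}$, it equals $t_1^{L[U]}(\vec{x},\kappa)$ for a term $t_1$ depending uniformly (and computably from $0^{\dag}$) on $(t,|\vec{y}|)$. Symmetrically, if $t_1^{L[U]}(\vec{x}^{<}\cup\vec{x}^{>},\kappa)$ lies below $\min\vec{x}^{>}$ with $\vec{x}^{<}\subseteq X\cap(\beta+1)$ and $\vec{x}^{>}\subseteq X\setminus\alpha$, then it equals $t_2^{L[U]}(\vec{x}^{<},\kappa)$ for a term $t_2$ depending on $(t_1,|\vec{x}^{>}|)$. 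These reductions are the direct $0^{\dag}$-analog of the uniformity lemma underlying the $0^{\sharp}$ case, and they can be read off from $0^{\dag}$, which encodes the EM-theory of $(L[U],U)$ with its two classes of indiscernibles.

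Now every $\gamma<\alpha$ is of the form $t^{L[U]}(\vec{x},\kappa,\vec{y})$ by the Skolem hull clause of Fact \ref{fact on zero dagger two}; since $\gamma<\alpha\leq\kappa\leq\min\vec{y}$ and, by the bound assumption, $\gamma<\alpha\leq\min\vec{x}^{>}$, both reductions apply in turn and produce $\gamma=t_2^{L[U]}(\vec{x}^{<},\kappa)$ with $\vec{x}^{<}\in(X\cap(\beta+1))^{<\omega}$. Define
\[
F:\omega\times(\beta+1)^{<\omega}\longrightarrow\alpha,\qquad F(n,\vec{u})=\begin{cases}t_2^{L[U]}(\vec{u},\kappa)&\text{if }\vec{u}\in X^{<\omega}\text{ and the value lies below }\alpha,\\ 0&\text{otherwise,}\end{cases}
\]
where $n$ codes the data $(t_2,|\vec{x}^{>}|,|\vec{y}|)$. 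Using $0^{\dag}$ both to recognise membership in $X$ below $\beta$ and to evaluate Skolem terms on indiscernibles makes $F$ $\Sigma_1$-definable over $L_\alpha[0^{\dag}]$; its domain lies in $L_\alpha[0^{\dag}]$ (as $\alpha$ is a limit exceeding $\beta+\omega$), and its range is cofinal in $\alpha$, contradicting $0^{\dag}$-admissibility. The case $\alpha>\kappa$ is entirely symmetric: if $\sup(Y\cap\alpha)=\beta\in(\kappa,\alpha)$, the analogous reductions absorb the tail of $\vec{y}$ above $\alpha$ to yield a $\Sigma_1$-cofinal map from $\omega\times(\beta+1)^{<\omega}$ into $\alpha$.

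The main obstacle is the term-reduction lemma — precisely the two-class analog of the $0^{\sharp}$-uniformity fact — which must be formulated and verified uniformly over $0^{\dag}$. Once in place, the rest of the argument is a direct translation of the $0^{\sharp}$-admissibility proof, with the sole additional bookkeeping being to track which of the two classes $X$ and $Y$ each indiscernible belongs to during the reduction.
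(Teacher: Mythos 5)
The paper gives no proof of this Fact beyond the footnote's pointer to the $0^{\sharp}$ analogue in \cite[Theorem 4.3]{SyFriedman}, and your proposal is precisely that intended adaptation: assume the relevant class of indiscernibles is bounded by $\beta<\alpha$, use the (two-sorted) remarkability of $0^{\dag}$ to absorb the parameters above $\alpha$, and extract a $\Sigma_1(L_{\alpha}[0^{\dag}])$ map from a set in $L_{\alpha}[0^{\dag}]$ cofinally into $\alpha$, contradicting admissibility. The outline is sound; the only imprecision is that remarkability yields that the value $t^{L[U]}(\vec{x},\kappa,\vec{y})$ is \emph{independent} of the absorbed indiscernibles (so depends only on $t$ and the remaining parameters) rather than literally equal to a Skolem term in the smaller parameter set, but this does not affect the well-definedness of your function $F$ or its $\Sigma_1$-definability via the $0^{\dag}$-theory.
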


\begin{theorem}\label{HP for measurable cn}
Suppose $\kappa$ is a measurable cardinal and $L[U]$ is the unique $\kappa$-model. Then $HP(L[U])$ if and only if $0^{\dag}$ exists.
\end{theorem}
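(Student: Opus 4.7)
For $(\Leftarrow)$, take $0^{\dag}$ itself as the witnessing real for $HP(L[U])$. Given any $0^{\dag}$-admissible $\alpha$, Fact \ref{fact on zero dagger lage cardinal} supplies indiscernibles from $X$ (if $\alpha\le\kappa$) or from $Y$ (if $\alpha>\kappa$) that are cofinal in $\alpha$. Because $X\cup\{\kappa\}\cup Y$ contains every uncountable cardinal, hence $L[U]$-cardinals, indiscernibility for $L[U]$ transfers the first-order property ``is an $L[U]$-cardinal'' to every element of $X\cup Y$. Hence $\alpha$ is a supremum of $L[U]$-cardinals and therefore itself an $L[U]$-cardinal (the case $\alpha=\kappa$ is immediate since $\kappa$ is measurable).

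For $(\Rightarrow)$, the strategy is close to Theorem \ref{cited theorem from Woodin} and the Case 1 argument of Theorem \ref{big thm about u}: produce a nontrivial elementary embedding $j:L[U]\prec L[U]$ with $crit(j)>\kappa$ and apply Fact \ref{fact on zero dagger}. Let $a$ witness $HP(L[U])$, choose $\eta>\kappa^{+}$ with $L_{\eta}[U,a]$ admissible, and take $N\prec L_{\eta}[U,a]$ with $\kappa+1\subseteq N$, $\kappa^{+}\in N$, $|N|=\kappa$, and $N$ closed under $\omega$-sequences; this is possible because the measurable cardinal $\kappa$ is inaccessible and so $\kappa^{\omega}=\kappa$. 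Let $\pi: L_{\bar{\eta}}[U,a]\cong N\prec L_{\eta}[U,a]$ be the inverse of the transitive collapse and set $\nu=crit(\pi)$. The condition $\kappa+1\subseteq N$ forces $\pi$ to fix every ordinal $\le\kappa$, so $\nu>\kappa$, while automatically $\nu<\bar{\eta}$.

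Admissibility is preserved under elementary equivalence, so $L_{\bar{\eta}}[U,a]$ is admissible, whence $\bar{\eta}$ is $(U,a)$-admissible and in particular $a$-admissible. By $HP(L[U])$, $\bar{\eta}$ is an $L[U]$-cardinal, and since $\bar{\eta}>\nu$ this yields $\mathcal{P}(\nu)\cap L[U]\subseteq L_{\bar{\eta}}[U]$. Define the derived $L[U]$-ultrafilter $W=\{X\subseteq\nu\mid X\in L[U]\wedge\nu\in \pi(X)\}$. The main technical step is countable completeness of $W$, handled in the style of Lemma \ref{a key lemma on U}: given a countable $Y\subseteq W$, closure of $N$ under $\omega$-sequences lifts $Y$ into $L_{\bar{\eta}}[U,a]$; because $|Y|\le\omega<\nu=crit(\pi)$ one has $\pi(Y)=\pi``Y$, so $\nu\in\bigcap\pi``Y=\pi(\bigcap Y)$, and elementarity gives $\bigcap Y\ne\emptyset$.

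Form $j:L[U]\to Ult(L[U],W)$. Countable completeness makes the ultrapower well-founded, and its transitive collapse is an inner model of the form $L[j(U)]$ in which $j(U)$ is a normal measure on $j(\kappa)=\kappa$; hence $L[j(U)]$ is a $\kappa$-model. By the uniqueness hypothesis, $L[j(U)]=L[U]$, so $j:L[U]\prec L[U]$ is a nontrivial elementary embedding with $crit(j)=\nu>\kappa$, and Fact \ref{fact on zero dagger} delivers $0^{\dag}$. The chief obstacle is tuning $N$ so that $\kappa+1\subseteq N$ (to push $\nu$ above $\kappa$) while keeping $|N|=\kappa$ and $N$ closed under $\omega$-sequences (for countable completeness of $W$); measurability of $\kappa$ supplies exactly the cardinal arithmetic needed, and uniqueness of the $\kappa$-model is what converts the ultrapower embedding into a self-embedding of $L[U]$.
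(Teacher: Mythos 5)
Your $(\Leftarrow)$ direction is essentially the paper's argument (the paper concludes $\alpha\in X$ or $\alpha\in Y$ directly from closedness rather than taking a supremum, but this is the same proof). The problem is in $(\Rightarrow)$, and it is concentrated in the choice $|N|=\kappa$. For the transitive collapse of $N$ to be a genuine level $L_{\bar{\eta}}[U,a]$ of the $L[U,a]$-hierarchy --- which is what you use when you assert $\mathcal{P}(\nu)\cap L[U]\subseteq L_{\bar{\eta}}[U]\subseteq M$ and then form the derived ultrafilter $W$ --- you need the collapse to fix $U$, i.e.\ you need, as in Fact \ref{condensation for L[U]}, the transitive closure $tr(U\cap L_{\eta}[U])$ to be contained in $N$. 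That set has cardinality $|(\kappa^{+})^{L[U]}|$, which can equal $\kappa^{+}$ (for instance if $V=L[U]$), so it cannot fit inside a hull of size $\kappa$. With $|N|=\kappa$ the predicate collapses to the proper subfamily $U\cap N\subsetneq U$, the collapsed structure is $L_{\bar{\eta}}[U\cap N,a]$ rather than $L_{\bar{\eta}}[U,a]$, and there is no reason for it to contain $\mathcal{P}(\nu)\cap L[U]$ (which may itself have cardinality $\kappa^{+}$, exceeding $|M|=\kappa$). Consequently $W$ need not measure every $L[U]$-subset of $\nu$, and the ultrapower of $L[U]$ by $W$ is not defined; the admissibility transfer and the countable-completeness argument are fine, but they sit on top of this broken foundation.

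The repair is exactly what the paper does: take $\lambda$ to be $(x,U)$-admissible and $X\prec L_{\lambda}[U][x]$ with $|X|=2^{\kappa}$, $X$ closed under $\omega$-sequences, and $tr(U\cap L_{\lambda}[U])\subseteq X$. Then Fact \ref{condensation for L[U]} gives that the collapse is $L_{\theta}[U][x]$ with the true $U$; the critical point $\eta$ is still greater than $\kappa$ because $\kappa+1\subseteq X$; $HP(L[U])$ makes $\theta$ an $L[U]$-cardinal, so $\mathcal{P}(\eta)\cap L[U]\subseteq L_{\theta}[U]$ lies in the domain of the embedding and the derived ultrafilter is total on $\mathcal{P}(\eta)\cap L[U]$; and closure under $\omega$-sequences (available since $(2^{\kappa})^{\omega}=2^{\kappa}$) gives countable completeness. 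From there your concluding steps --- well-foundedness of the ultrapower, identification of the target with the unique $\kappa$-model to obtain a self-embedding with critical point above $\kappa$, and the appeal to Fact \ref{fact on zero dagger} --- are correct and agree with the paper.
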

\begin{proof}
$(\Rightarrow)$ Let $x$ be the witness real for $HP(L[U])$. Pick $\lambda> 2^{\kappa}$ and $X$ such that $\lambda$ is $(x,U)$-admissible,  $X\prec L_{\lambda}[U][x]$, $|X|=2^{\kappa}$, $X$ is closed under $\omega$-sequences and the transitive closure of $U\cap L_{\lambda}[U]$ is contained in $X$. By Fact \ref{condensation for L[U]}, the transitive collapse of $X$ is of the form  $L_{\theta}[U][x]$. Let $j: L_{\theta}[U][x]\cong X\prec L_{\lambda}[U][x]$ be the inverse of the collapsing map and $\eta=crit(j)$. Note that $\eta>\kappa$. Since $\theta$ is $(x,U)$-admissible, by $HP(L[U])$, $\theta$ is an $L[U]$-cardinal. Define $\bar{U}=\{X\subseteq\eta\mid X\in L[U]$ and $\eta\in j(X)\}$. Since $(\eta^{+})^{L[U]}\leq\theta, \bar{U}\subseteq L_{\theta}[U]$. $\bar{U}$ is an $L[U]$-ultrafilter on $\eta$. Since $L_{\theta}[U][x]$ is closed under $\omega$-sequences, $\bar{U}$ is countably complete. So we can build a nontrivial embedding from $L[U]$ to $L[U]$ with critical point greater than $\kappa$. By Fact \ref{fact on zero dagger}, $0^{\dag}$ exists.

$(\Leftarrow)$ Suppose $0^{\dag}$ exists and $\alpha$ is $0^{\dag}$-admissible. We show that $\alpha$ is an $L[U]$-cardinal. By Fact \ref{fact on zero dagger two}, let $\langle X,Y\rangle$ be the double class of indiscernibles for $L[U]$. If $\alpha\leq\kappa$, then by Fact \ref{fact on zero dagger lage cardinal}, $\alpha\in X$. If $\alpha>\kappa$, then by Fact \ref{fact on zero dagger lage cardinal}, $\alpha\in Y$. Trivially, elements of $X$ and $Y$ are $L[U]$-cardinals.
\end{proof}

\begin{fact}\label{embedding not exist}
(\cite{CoveringLemma}, \cite{Steel})\quad Suppose there is no inner model with one measurable cardinal and let $K$ be the corresponding core model. Then, $K$ has the rigidity property.
\end{fact}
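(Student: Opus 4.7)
The plan is to prove rigidity by contradiction. Suppose $j\colon K\to K$ is a nontrivial elementary embedding with $\kappa=crit(j)$, and define the derived $K$-ultrafilter $U=\{X\in \mathcal{P}(\kappa)\cap K : \kappa\in j(X)\}$. Routine arguments show that $U$ measures all sets in $\mathcal{P}(\kappa)\cap K$ and is $K$-normal (though $U$ itself need not lie in $K$). The goal is to show that $U$ gives rise to an inner model with a measurable cardinal, contradicting the hypothesis.

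The central step is to verify that $U$ is countably complete in $V$, in direct analogy with Lemma \ref{a key lemma on U} from the proof of Theorem \ref{characterizaton of zero sharp thm}. Given a countable $Y\subseteq U$, I would build an elementary chain of submodels of some $H_\theta$ containing the relevant data, of length $\omega_1$, and apply the Dodd-Jensen covering lemma for $K$ (available precisely because there is no inner model with a measurable cardinal) to cover $Y$ by some $T\in K$ of size at most $\omega_1$. Because $|T|$ is small compared to $crit(j)$ inside the relevant transitive collapse, one has $j(T)=j``T$, which locates $U\cap T$ inside the collapsed model and yields $\bigcap Y\neq\emptyset$. Once $U$ is countably complete, the iterated ultrapowers of $K$ by $U$ are well-founded, so $L[U]$ is an inner model in which $\kappa$ is measurable, contradicting the standing hypothesis.

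The main obstacle is twofold. First, the covering lemma for $K$ only provides covers of size $|X|+\omega_1$, so the small-submodel construction must have length exactly $\omega_1$ with each link of size $\kappa$, paralleling the setup in Theorem \ref{big thm about u} rather than the countable-chain setup of Theorem \ref{characterizaton of zero sharp thm}; some care is needed to arrange this so that $crit(j)$ ends up strictly above the size of the cover $T$. Second, the derived-ultrafilter argument is clean only when the hypothetical critical point is sufficiently large; embeddings with critical point at or below $\omega_1$ must be ruled out by the full Dodd-Jensen iterability lemma for $K$, which says that any elementary embedding between iterates of $K$ agrees with the iteration map, so that no nontrivial self-embedding of $K$ can exist. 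This iterability content is precisely what is cited to \cite{CoveringLemma} and \cite{Steel}.
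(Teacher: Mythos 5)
First, a point of comparison: the paper does not prove Fact \ref{embedding not exist} at all; it is imported from \cite{CoveringLemma} and \cite{Steel} and used as a black box in Corollary \ref{Theorem for core model}(2). So there is no internal proof to measure yours against. What can be said is that your sketch follows the covering-lemma template that the paper does use in the proofs of Theorem \ref{characterizaton of zero sharp thm} and Theorem \ref{big thm about u}, and that for embeddings with sufficiently large critical point this template does work.

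The genuine gap is the case of small critical point, which you name but do not close. The critical point $\kappa$ of a hypothetical $j\colon K\to K$ is inaccessible in $K$, but nothing prevents it from being small in $V$ (say below $\omega_2$). In that situation the Dodd--Jensen covering lemma only yields a cover $T\in K$ of a countable $Y\subseteq U$ with $|T|^{V}=\omega_1$, and one can then have $|T|^{K}\geq\kappa$, so $j(T)\neq j``T$ and the countable-completeness argument breaks down at exactly the step you rely on. Your proposed repair --- invoke ``the full Dodd--Jensen iterability lemma \ldots so that no nontrivial self-embedding of $K$ can exist'' --- is circular: that conclusion is Fact \ref{embedding not exist} itself. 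The Dodd--Jensen lemma as found in \cite{Steel} is a statement about embeddings between iterates of a fixed set-sized iterable premouse; promoting it to the proper-class model $K$ and to an arbitrary external embedding $j$ is precisely the content of the rigidity theorem, and is carried out in the sources by comparison and the mouse order rather than by the covering computation. A second, smaller gap: even granting countable completeness of $U$, the conclusion that ``$L[U]$ is an inner model in which $\kappa$ is measurable'' is not immediate, since $U\notin K$ forces $L[U]\not\subseteq K$, and $\mathcal{P}(\kappa)\cap L[U]$ may contain sets not measured by $U$; one must either iterate $(K,U)$ out and analyze the limit model, or cite the Dodd--Jensen equivalence between the existence of a countably complete $K$-ultrafilter and the existence of an inner model with a measurable cardinal. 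That equivalence is available in the cited sources, but it should be named explicitly, in the same way the paper invokes Fact \ref{fact on zero dagger} at the corresponding step of Theorem \ref{big thm about u}.
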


\begin{corollary}\label{Theorem for core model}
\begin{enumerate}[(1)]
      \item Suppose $0^{\sharp}$ exists. Then $HP(L[0^{\sharp}])$ if and only if $(0^{\sharp})^{\sharp}$ exists.
          \item Suppose there is no inner model with one measurable cardinal and that $K$ is the corresponding core model. Then $HP(K)$ does not hold.
\end{enumerate}
\end{corollary}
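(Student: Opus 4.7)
The plan is to follow the template of Theorem \ref{HP for measurable cn} in both directions, replacing $L[U]$ by $L[0^{\sharp}]$ in item (1) and by $K$ in item (2). The engine is the standard ultrapower construction: from $HP(M)$ with witness real $x$, build a countably complete $M$-ultrafilter sitting inside a suitable $L_\theta[M,x]$ that arises as the transitive collapse of an appropriate elementary submodel of $L_\lambda[M,x]$, and whose ultrapower is a nontrivial elementary embedding $j \colon M \to M$. One then applies the appropriate Kunen-style characterization: the existence of such a $j$ for $M = L[0^{\sharp}]$ is equivalent to $(0^{\sharp})^{\sharp}$ existing, while for $M = K$ it is outright ruled out by Fact \ref{embedding not exist}.

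For the forward direction of (1), let $x$ witness $HP(L[0^{\sharp}])$ and work in $L[0^{\sharp}][x]$. Pick a regular $(x,0^{\sharp})$-admissible $\lambda > \omega_2$ and $X \prec L_\lambda[0^{\sharp}][x]$ of size $\omega_1$, containing $\omega_1 \cup \{\omega_2\}$ and closed under $\omega$-sequences. Condensation (Fact \ref{condensation for L[U]} with $A = 0^{\sharp}$) identifies the transitive collapse of $X$ with some $L_\theta[0^{\sharp}][x]$; let $j \colon L_\theta[0^{\sharp}][x] \to L_\lambda[0^{\sharp}][x]$ be the inverse collapse and $\eta = \mathrm{crit}(j)$, so $\omega_1 \leq \eta < \omega_2$. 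By $HP(L[0^{\sharp}])$ applied to $\theta$, which is $(x,0^{\sharp})$-admissible, $\theta$ is an $L[0^{\sharp}]$-cardinal, so $(\eta^{+})^{L[0^{\sharp}]} \leq \theta$ and $\bar{U} := \{Y \subseteq \eta : Y \in L[0^{\sharp}] \wedge \eta \in j(Y)\}$ lies inside $L_\theta[0^{\sharp}]$ and is a genuine $L[0^{\sharp}]$-ultrafilter on $\eta$, countably complete by the $\omega$-closure of $X$. The ultrapower of $L[0^{\sharp}]$ by $\bar{U}$ produces the desired nontrivial $e \colon L[0^{\sharp}] \to L[0^{\sharp}]$. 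For the converse, $(0^{\sharp})^{\sharp}$ yields a closed unbounded proper class $I$ of Silver-style indiscernibles for $L[0^{\sharp}]$ whose elements are all $L[0^{\sharp}]$-cardinals, and the folklore analog of Fact \ref{fact on zero dagger lage cardinal} shows that every $(0^{\sharp})^{\sharp}$-admissible ordinal lies in $I$, so $(0^{\sharp})^{\sharp}$ itself serves as the witness real for $HP(L[0^{\sharp}])$.

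For item (2), assume toward contradiction that $HP(K)$ holds with witness real $x$. Carry out the identical construction with $K$ in place of $L[0^{\sharp}]$: pick a regular $x$-admissible $\lambda > \omega_2$, an $\omega$-closed $X \prec L_\lambda[K,x]$ of size $\omega_1$ containing enough transitive closures of $K$-parameters, use condensation for $K$ (available from the core-model theory under our hypothesis) to collapse $X$ to $L_\theta[K,x]$, and extract a countably complete $K$-ultrafilter $\bar{U}$ on some $\eta$ from the fact that $\theta$ is a $K$-cardinal by $HP(K)$. The ultrapower yields a nontrivial $K \to K$ embedding, contradicting Fact \ref{embedding not exist}. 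The main obstacle in both parts is securing the right form of condensation so that the transitive collapse retains the shape $L_\theta[M,x]$; for $M = L[0^{\sharp}]$ this is immediate from Fact \ref{condensation for L[U]}, while for $M = K$ it requires the standard condensation package for the core model below a measurable, but since the ultrapower argument never descends below $\omega_2$, only the core-model analog at that level is needed.
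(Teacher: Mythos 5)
Your proposal is correct and follows essentially the same route as the paper: part (1) is the relativization to $L[0^{\sharp}]$ of the ultrapower argument behind ``$HP(L)\Leftrightarrow 0^{\sharp}$ exists'' (with the converse via unboundedness of the indiscernibles for $L[0^{\sharp}]$ at $(0^{\sharp})^{\sharp}$-admissibles), and part (2) is the identical collapse-and-ultrafilter construction for $K=L[\mathcal{M}]$ contradicting the rigidity of $K$. The only detail worth flagging is that the existence of an $\omega$-closed $X$ of size $\omega_1$ in part (2) should be justified by $K\models GCH$ (as the paper notes), which you leave implicit.
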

\begin{proof}
$(1)$ Follows from the proof of $``HP(L)\Leftrightarrow 0^{\sharp}$ exists". Note that if $\alpha$ is $(0^{\sharp})^{\sharp}$-admissible and $I$ is the class of Silver indiscernibles for $L[0^{\sharp}]$, then $I$ is unbounded in $\alpha$ and hence $\alpha\in I$.

(2) Note that $K=L[\mathcal{M}]$ where $\mathcal{M}$ is a class of mice. Suppose $HP(K)$ holds and $x$ is the witness real for $HP(K)$. Pick  $\theta>\omega_2$ and $X$ such that $\theta$ is $(\mathcal{M}, x)$-admissible, $X\prec J_{\theta}[\mathcal{M}, x]$, $\omega_2\in X, |X|=\omega_1$ and $X$ is closed under $\omega$-sequences. Since $K\models GCH$, such an $X$ exists. By the condensation theorem for  $K$, let $j: J_{\theta^{\prime}}[\mathcal{M}\upharpoonright \theta^{\prime}, x]\cong X \prec J_{\theta}[\mathcal{M}, x]$ be the inverse of the collapsing map. Let $\lambda=crit(j)$ and $U=\{X\subseteq\lambda\mid X\in K$ and $\lambda\in j(X)$\}. Note that $\theta^{\prime}$ is a $K$-cardinal and $U$ is a countably complete $K$-ultrafilter on $\lambda$. So there is a nontrivial elementary embedding from $K$ to $K$ which contradicts Fact \ref{embedding not exist}.
\end{proof}

From proof of Corollary \ref{Theorem for core model}(2), if $M$ is an $L$-like inner model, $M$ has the rigidity property and some proper form of condensation, and $M\models CH$, then $HP(M)$ does not hold.

\begin{fact}\label{fact about HOD}
(\cite{Steel})\quad $(AD^{L(R)})\quad \mathsf{HOD}^{L(R)}=L(P)$ for some $P\subseteq \Theta$ where $\Theta=\sup\{\alpha\mid \exists f\in L(R)(f:R\rightarrow\alpha$ is surjective$)\}$.
\end{fact}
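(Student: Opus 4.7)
The plan is to work inside $L(R)$ under $AD$, construct a single subset $P\subseteq\Theta$ that encodes the $OD^{L(R)}$ theory, and then verify both inclusions in the asserted identity $\mathsf{HOD}^{L(R)}=L(P)$.

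The first step is to define $P$. Fix a recursive enumeration $(\varphi_n)_{n<\omega}$ of the $\in$-formulas and, using any $OD^{L(R)}$ pairing of $\omega\times\Theta^{<\omega}$ with $\Theta$ (available because $\Theta$ is a strong limit cardinal in $L(R)$ under $AD$), set
\[
P = \{\,\langle n,\vec\alpha\rangle\in\omega\times\Theta^{<\omega} : L(R)\models \varphi_n[\vec\alpha]\,\},
\]
regarded as a subset of $\Theta$ via the fixed pairing. Since $P$ is definable in $L(R)$ without real parameters, $P\in \mathsf{HOD}^{L(R)}$, and therefore $L(P)\subseteq \mathsf{HOD}^{L(R)}$ immediately.

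For the reverse inclusion I would prove by induction on rank that every $X\in \mathsf{HOD}^{L(R)}$ lies in $L(P)$. If $X\subseteq\Theta$ is $OD^{L(R)}$, then $X=\{\gamma : L(R)\models \varphi_m[\gamma,\vec\beta]\}$ for some $m<\omega$ and $\vec\beta\in\Theta^{<\omega}$, so $\gamma\in X$ iff $\langle m,\gamma,\vec\beta\rangle\in P$, giving $X\in L(P)$. For an arbitrary $X\in \mathsf{HOD}^{L(R)}$ of higher rank I would apply the Moschovakis Coding Lemma inside $L(R)$ to pass to an $OD^{L(R)}$ code of the transitive closure of $X$ as a subset of some $\kappa<\Theta$, reducing to the previous case and closing the induction.

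The main obstacle is the Coding Lemma step: one must know that every $OD^{L(R)}$ well-founded extensional relation on an ordinal below $\Theta$ admits an $OD^{L(R)}$ coding as a subset of an ordinal below $\Theta$. This is where $AD^{L(R)}$ is used essentially, since without choice one cannot uniformly convert an $OD$ definition in $L(R)$ (which may quantify over $R$) into an $OD$ subset of $\Theta$; the Coding Lemma supplies the requisite definable surjections $R\twoheadrightarrow\alpha$ for each $\alpha<\Theta$ and thereby makes the entire ordinal-definable hierarchy of $L(R)$ recoverable from $P$.
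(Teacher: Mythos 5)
The paper does not actually prove this statement; it is quoted from Steel's Handbook chapter, so your sketch can only be measured against the standard folklore argument. Your high-level plan (code ordinal-parameter definability over $L(R)$ into a single $P\subseteq\Theta$ and verify both inclusions) has the right shape, but as written it contains three genuine gaps. First, your $P$ is defined from the full satisfaction relation of the proper class $L(R)$, which by Tarski's theorem is not definable over $L(R)$; so the claim that $P$ is ``definable in $L(R)$ without real parameters'' and hence belongs to $\mathsf{HOD}^{L(R)}$ is not established. The standard repair is to work with definability over the set levels $L_{\beta}(R)$ (this is how $OD$ is made first-order in the first place), but then the level $\beta$ and the ordinal parameters range over all of $Ord$ rather than over $\Theta$, which undercuts your next step. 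Second, the base case of your induction silently assumes that every $OD^{L(R)}$ subset of $\Theta$ is definable from ordinal parameters \emph{below} $\Theta$. That bounding of parameters is precisely the nontrivial content of the theorem, and it is where $AD$ enters essentially (via the Coding Lemma and the analysis of the $OD$ sets of reals, e.g.\ through the Vop\v{e}nka algebra of $OD$ sets of reals); you invoke the Coding Lemma, but only for a different and, as explained next, untenable purpose.

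Third, the inductive step fails outright for sets of rank at least $\Theta$: the transitive closure of an $OD$ subset of, say, $\Theta^{+}$ has cardinality at least $\Theta^{+}$ in $\mathsf{HOD}$ and therefore cannot be coded, by any injection whatsoever, as a subset of some $\kappa<\Theta$. The Coding Lemma concerns subsets of ordinals below $\Theta$ and sets of reals; it provides no such compression of high-rank $OD$ sets. The actual argument does not compress these sets below $\Theta$ at all: it shows instead that they are already constructible inside $L[P]$ from $P$ together with arbitrary ordinal parameters (for instance, because every $OD$ set of ordinals is decided by the weakest condition of a sufficiently homogeneous $OD$ forcing whose forcing relation is definable from $P$, or by an induction along the levels $L_{\beta}(R)$). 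As it stands, your proposal establishes at most the easy inclusion $L(P)\subseteq\mathsf{HOD}^{L(R)}$, and even that only for a $P$ whose ordinal definability has not been verified.
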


It is an open question whether there exists a nontrivial elementary embedding from $\mathsf{HOD}$ to $\mathsf{HOD}$.\footnote{The answer to this question is negative if $V=\mathsf{HOD}$.\cite[Theorem 21]{Hamkins} provides a very easy proof of the Kunen inconsistency in the case $V=\mathsf{HOD}$.}  However, the following fact shows that the answer to this question is negative for embeddings which are definable in $V$ from parameters.

\begin{fact}\label{definable HOD}
(\cite[Theorem 35]{Hamkins})\quad Do not assume $AC$. There is no nontrivial elementary embedding from $\mathsf{HOD}$ to $\mathsf{HOD}$ that is definable in $V$ from parameters.
\end{fact}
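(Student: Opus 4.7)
The plan is to adapt the classical Kunen inconsistency argument to the $\mathsf{HOD}$ setting, taking advantage of two features of the hypothesis: first, $\mathsf{HOD}$ always satisfies $ZFC$ (even when $V$ satisfies only $ZF$), so choice-dependent constructions such as $\omega$-J\'onsson functions are available internally; second, the $V$-definability of $j$ from parameters makes $j\upharpoonright\lambda$ a bona fide set in $V$ for every ordinal $\lambda$, via Replacement in $V$ applied to the formula defining $j$ together with its parameters.

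Suppose for contradiction that $j:\mathsf{HOD}\to\mathsf{HOD}$ is a nontrivial elementary embedding defined in $V$ by $\varphi(\cdot,\cdot,\vec{p})$. I would set $\kappa=\mathrm{crit}(j)$ and $\lambda=\sup_{n<\omega}j^{n}(\kappa)$, so that $j(\lambda)=\lambda$. Working inside $\mathsf{HOD}$, I use $AC$ to build an $\omega$-J\'onsson function $f:[\lambda]^{\omega}\to\lambda$ in the classical manner; then $j(f)\in\mathsf{HOD}$ satisfies the analogous J\'onsson property in $\mathsf{HOD}$. By Replacement in $V$, the set $X^{*}:=j``\lambda$ is an element of $V$ of $V$-size $\lambda$ (witnessed by $j\upharpoonright\lambda$), and since $j$ is strictly increasing on ordinals with $\kappa\notin j``\lambda$, we have $\kappa\in\lambda\setminus X^{*}$.

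The intended contradiction is Kunen's calculation: for any countable-in-$\mathsf{HOD}$ set $t\subseteq\lambda$, $j(t)=j``t$ (using $j(\omega)=\omega$ together with elementarity applied to an enumeration of $t$), so $j(f)(j``t)=j(f(t))\in j``\lambda=X^{*}$; hence $j(f)``[X^{*}]^{\omega}\subseteq X^{*}$, which omits $\kappa$ and thereby violates the J\'onsson property of $j(f)$ at $X^{*}$. The main obstacle, and what I expect to be the heart of the argument, is that the J\'onsson property of $j(f)$ is internal to $\mathsf{HOD}$ and so only directly applies to witness sets $X\in\mathsf{HOD}$ of $\mathsf{HOD}$-cardinality $\lambda$, whereas $X^{*}$ a priori lies only in $V$. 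To bridge this gap I would exploit the $V$-definability of $j$ from $\vec{p}$: either pass to $\mathsf{HOD}(\mathrm{tc}(\{\vec{p}\}))$, which still recognises $j\upharpoonright\lambda$ as an internal set and in which the relevant amount of choice is available, or perform a Suzuki-style diagonalization to absorb $\vec{p}$ and thereby produce a genuinely $\mathsf{HOD}$-internal witness against the internal J\'onsson property of $j(f)$.
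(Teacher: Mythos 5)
This statement is quoted by the paper as a Fact with a citation to Hamkins--Kirmayer--Perlmutter, Theorem~35; the paper itself contains no proof, so your proposal has to be judged against the argument in that reference, which is a Suzuki-style definability argument, not the Kunen--J\'onsson argument you outline. Your proposal has a genuine gap, and it is exactly the one you flag as ``the heart of the argument'' and then leave open. The internal J\'onsson property of $j(f)$ in $\mathsf{HOD}$ only constrains $j(f)``\bigl([X]^{\omega}\bigr)^{\mathsf{HOD}}$ for sets $X\in\mathsf{HOD}$, and $X^{*}=j``\lambda$ is in general not in $\mathsf{HOD}$ (it is ordinal-definable from $\vec{p}$, not hereditarily ordinal-definable). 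Neither suggested repair closes this: passing to $W=\mathsf{HOD}(\mathrm{tc}(\{\vec{p}\}))$ puts $X^{*}$ inside a ZFC model, but the J\'onsson property of $j(f)$ is not upward absolute from $\mathsf{HOD}$ to $W$ --- exhibiting $X^{*}\in W\setminus\mathsf{HOD}$ with $j(f)``[X^{*}]^{\omega}\neq\lambda$ contradicts nothing. There is also a second, independent break you do not address: even for $s\in\bigl([X^{*}]^{\omega}\bigr)^{\mathsf{HOD}}$, the preimage $t=j^{-1}``s$ need not lie in $\mathsf{HOD}$, so $s$ need not have the form $j(t)$ for any $t$ in the domain of $j$, and the computation $j(f)(s)=j(f(t))\in j``\lambda$ is only available for the sparse subfamily $\{j``t : t\in([\lambda]^{\omega})^{\mathsf{HOD}}\}$ of $[X^{*}]^{\omega}$.

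There is a decisive structural objection as well. The only places your outline uses the definability hypothesis are (i) to get $j\upharpoonright\lambda\in V$ and (ii) to place $X^{*}$ in some ZFC model containing $\mathsf{HOD}$; but (i) holds for an arbitrary class embedding $j:\mathsf{HOD}\to\mathsf{HOD}$ by class Separation, and (ii) holds for free since $X^{*}\in\mathsf{HOD}(\{X^{*}\})$. So if your argument could be completed along these lines it would rule out \emph{all} nontrivial elementary embeddings from $\mathsf{HOD}$ to $\mathsf{HOD}$, settling the open question stated in the paper in the paragraph immediately preceding this Fact. Definability must therefore enter through the logical form of the definition itself --- as in Suzuki's theorem, by fixing the defining formula $\varphi$, minimizing the critical point over all parameters to obtain parameter-free (hence ordinal-definable) data, and deriving the contradiction from that minimality --- which is what the cited proof does and what your ``Suzuki-style diagonalization to absorb $\vec{p}$'' would need to spell out; as it stands, that phrase is a placeholder for the entire content of the theorem.
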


\begin{theorem}\label{HP for HOD}
$(ZF+ AD^{L(R)})\quad \mathsf{HP(HOD)}$ does not hold.
\end{theorem}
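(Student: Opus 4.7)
The plan is to adapt the template already used in Theorem \ref{HP for measurable cn} and Corollary \ref{Theorem for core model}(2): given a putative witness real $x$ for $HP(\mathsf{HOD})$, I would construct an $\omega$-closed elementary substructure of some $J_{\lambda}[P,x]$ for an $(x,P)$-admissible $\lambda$, apply $L[P]$-style condensation to its transitive collapse, extract a countably complete $\mathsf{HOD}$-ultrafilter, iterate it to obtain a nontrivial elementary embedding $j^{\ast}:\mathsf{HOD}\to\mathsf{HOD}$ that is definable in $V$ from parameters, and then invoke Fact \ref{definable HOD} for the contradiction.

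Concretely, suppose $x\in R$ is a witness real for $HP(\mathsf{HOD})$. By Fact \ref{fact about HOD}, $\mathsf{HOD}=L(P)$ for some $P\subseteq\Theta$, so $\mathsf{HOD}$ behaves essentially like $L[P]$ and enjoys the form of condensation encoded in Fact \ref{condensation for L[U]}. I would pick a sufficiently large $(P,x)$-admissible $\lambda$ and form $X\prec J_{\lambda}[P,x]$ of size $\omega_1$, closed under countable sequences, and containing $\omega_2$ together with the remaining parameters of the argument; since $\mathsf{HOD}\models AC$ and since $DC$ already suffices to build an $\omega$-closed Skolem hull, such an $X$ exists. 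Applying Fact \ref{condensation for L[U]} then yields $j:J_{\lambda^{\prime}}[P\cap\lambda^{\prime},x]\cong X\prec J_{\lambda}[P,x]$ with some critical point $\eta$. Admissibility passes down through the collapse, so $\lambda^{\prime}$ is $x$-admissible, and $HP(\mathsf{HOD})$ forces $\lambda^{\prime}$ to be a $\mathsf{HOD}$-cardinal. Setting $U=\{Y\subseteq\eta\mid Y\in\mathsf{HOD}\wedge\eta\in j(Y)\}$, one has $(\eta^{+})^{\mathsf{HOD}}\leq\lambda^{\prime}$, hence $U\subseteq J_{\lambda^{\prime}}[P\cap\lambda^{\prime},x]$ and $U$ is a genuine $\mathsf{HOD}$-ultrafilter on $\eta$; the $\omega$-closure of $X$ then makes $U$ countably complete by exactly the argument of Lemma \ref{a key lemma on U}.

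The well-founded iterated ultrapower of $\mathsf{HOD}$ by $U$ produces a nontrivial elementary embedding $j^{\ast}:\mathsf{HOD}\to\mathsf{HOD}$, and the entire construction is visibly definable in $V$ from the parameters $x$, $X$, $U$, directly contradicting Fact \ref{definable HOD}. The step I expect to be the main obstacle is the first one, namely producing the $\omega$-closed elementary submodel $X$ of the correct size in the absence of full $AC$ in $V$. The natural way around this is to perform the Skolem-hull construction inside $L(R)$, where $DC$ is available under $AD^{L(R)}$, after verifying that the objects $J_{\lambda}[P,x]$ manipulated in the argument all lie in $L(R)$ so that this $DC$ is applicable. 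Once $X$ has been produced, the extraction of $U$, its countable completeness, and the iteration yielding $j^{\ast}$ are all by direct transcription of the arguments in Theorem \ref{HP for measurable cn} and Corollary \ref{Theorem for core model}(2), and Fact \ref{definable HOD} then closes the proof.
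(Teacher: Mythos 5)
Your proposal is correct and follows essentially the same route as the paper: it reduces to the template of Corollary \ref{Theorem for core model}(2) via $\mathsf{HOD}=L(P)$ and $L(P)\models CH$, extracts a countably complete $\mathsf{HOD}$-ultrafilter from an $\omega$-closed elementary hull after condensation, and derives a nontrivial embedding of $\mathsf{HOD}$ that is definable in $V$ from parameters, contradicting Fact \ref{definable HOD}. The only difference is one of emphasis: where you flag the construction of the $\omega$-closed hull without full $AC$ as the delicate step and propose carrying it out in $L(R)$ using $DC$, the paper simply invokes $L(P)\models CH$ and defers to the argument of Corollary \ref{Theorem for core model}(2).
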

\begin{proof}
By Fact \ref{fact about HOD}, under $ZF + AD^{L(R)}, \mathsf{HOD}=L(P)$ for some $P\subseteq \Theta$. Suppose $\mathsf{HP(HOD)}$ holds. Then since $L(P)\models CH$, by a similar proof as in Corollary \ref{Theorem for core model}(2) we can show that there exists a nontrivial elementary embedding $j: L(P)\rightarrow L(P)$. Note that $j$ is definable in $V$ from parameters. i.e. there is a formula $\varphi$ and parameter $\vec{a}$ such that $j(x)=y$ if and only if $\varphi(x,y,\vec{a})$. This contradicts Fact \ref{definable HOD}.
\end{proof}

\section{Relationship between $HP(L)$ and the strong reflecting property for $L$-cardinals}

In this section, we discuss the relationship between the strong reflecting property for $L$-cardinals and  Harrington's Principle $HP(L)$.

\begin{theorem}\label{compare first level thm}
(Set forcing) \quad $SRP^{L}(\omega_1)$ implies $Con(Z_2\,+ \,HP(L))$.
\end{theorem}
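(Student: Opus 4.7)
The plan is to adapt the forcing construction from Sections 3.2--3.4 of \cite{YongCheng1} (which produces a model of $Z_3 + HP(L)$ from $SRP^{L}(\omega_2)$) down to the $\omega_1$ level. By Proposition \ref{srp of omega first}, the hypothesis $SRP^{L}(\omega_1)$ is equivalent to $\omega_1$ being a limit cardinal in $L$, so
\[
S = \{\alpha < \omega_1 : \alpha \text{ is an } L\text{-cardinal}\}
\]
is a club subset of $\omega_1$. The goal is to force a generic extension $V[G]$ containing a real $x$ such that every $x$-admissible ordinal below $\omega_1^{V[G]}$ belongs to $S$; then $H_{\omega_1}^{V[G]}$ will be a model of $Z_2 + HP(L)$, which is what is needed.

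First I would reduce to the case $V = L$, since $SRP^{L}(\omega_1)$ is a statement about $L$-cardinals below the $V$-cardinal $\omega_1$, and a preliminary small forcing may be used if CH is not already in hand. Working in $L$, I would fix a canonical almost disjoint family $\langle A_\alpha : \alpha < \omega_1\rangle \in L$ of infinite subsets of $\omega$ defined uniformly from the $L$-hierarchy, with $A_\alpha$ constructed inside $L_{\alpha + \omega}$. I would then apply Jensen--Solovay almost disjoint coding relative to this family, with target set $S$, to produce a real $x \subseteq \omega$ satisfying
\[
\alpha \in S \iff A_\alpha \subseteq^{*} x \qquad \text{for all } \alpha < \omega_1 .
\]
Under CH in $L$, the coding forcing is ccc of size $\omega_1$, so it preserves all cardinals and cofinalities.

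The core of the argument is then to verify that in $V[x]$ every $x$-admissible ordinal $\alpha < \omega_1$ lies in $S$, i.e., is an $L$-cardinal. Here one exploits the local decodability of the coding: for each $\alpha$, the truth value of ``$A_\alpha \subseteq^{*} x$'' is $\Sigma_1$-definable over $L_\beta[x]$ for $\beta$ slightly above $\alpha$, uniformly in $\alpha$. Thus if $\alpha$ is $x$-admissible, then $S \cap \alpha$ is $\Sigma_1$-definable over $L_\alpha[x]$; were $\alpha$ not itself an $L$-cardinal, one could combine this with the club structure of $S$ and a collapsing witness for $\alpha$ to manufacture a $\Sigma_1(L_\alpha[x])$ map from some $\delta < \alpha$ cofinally into $\alpha$, contradicting $x$-admissibility. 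Therefore every $x$-admissible ordinal below $\omega_1^{V[x]} = \omega_1$ is an $L$-cardinal, and since $H_{\omega_1}^{V[x]} \models Z_2$ and contains $x$, the real $x$ witnesses $HP(L)$ inside this model.

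The main obstacle is the careful arrangement of the almost disjoint family and of the coding conditions so that the decoding $\alpha \mapsto [A_\alpha \subseteq^{*} x]$ is genuinely \emph{local} at every admissible level, which is precisely the delicate technical content of Jensen--Solovay coding. This is exactly the ingredient pushed through in \cite{YongCheng1} at the $\omega_2$ level, and the same machinery adapts to the present $\omega_1$ setting with only notational changes.
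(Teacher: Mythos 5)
Your overall shape (almost disjoint coding of information about the club of $L$-cardinals below $\omega_1$, followed by an argument that $x$-admissibles are $L$-cardinals) matches the paper, but three steps do not survive scrutiny. First, ``reduce to the case $V=L$'' is incompatible with the hypothesis: by Proposition \ref{srp of omega first}, $SRP^{L}(\omega_1)$ says that $\omega_1^{V}$ is a limit cardinal in $L$, so in particular $\omega_1^{V}>\omega_1^{L}$; passing to $L$ destroys the hypothesis. (The paper does draw the a.d.\ family and the enumeration of reals from $L[C]$, but the forcing is done over $V$ and the decoding model is $L_\theta[x]$, not $H_{\omega_1}$ of an extension of $L$.) Second, and more seriously, coding the club $S$ directly by ``$\alpha\in S\iff A_\alpha\subseteq^{*}x$'' and then asserting that $\Sigma_1$-definability of $S\cap\alpha$ over $L_\alpha[x]$ together with ``a collapsing witness for $\alpha$'' manufactures a cofinal $\Sigma_1(L_\alpha[x])$ map is not an argument: if $\alpha$ is not an $L$-cardinal, the collapsing function first appears in some $L_\beta$ with $\beta\geq\alpha$, and there is no reason it is $\Sigma_1$ over $L_\alpha[x]$. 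The paper codes something different: a stepping function $F$ sending a real coding $\gamma$ to a real coding the \emph{next} element $\beta$ of the club $D$ of points with $(L_\beta[C],C\cap\beta)\prec(L_{\omega_1}[C],C)$, together with $C\cap\beta$. For an $x$-admissible $\alpha$ one sets $\gamma_0=\sup(D\cap\alpha)$, produces a real $y$ coding $\gamma_0$ inside the least admissible level $\alpha_0>\gamma_0$, decodes $F(y)$ locally, and obtains an element of $D$ strictly between $\gamma_0$ and $\alpha$ --- a contradiction unless $\gamma_0=\alpha$, i.e.\ $\alpha$ is a limit point of $D\subseteq C$ and hence an $L$-cardinal.

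Third, your conclusion is stronger than what this machinery delivers. Producing the real $y$ coding $\gamma_0$ requires $\gamma_0$ to be \emph{countable} in $L_{\alpha_0}[C\cap\gamma_0]$ (Claims \ref{first key calim on countable} and \ref{key calim on countable}), and this is proved by contradiction using the \emph{minimality} of $\theta$ with $L_\theta[x]\models Z_2$: otherwise $x$ is generic over $L_{\alpha_0}[C\cap\gamma_0]$ for the c.c.c.\ coding poset and $L_{\gamma_0}[x]\models Z_2$, contradicting minimality. Consequently the argument only shows that $x$-admissibles below this minimal $\theta$ are $L$-cardinals, which is precisely why the theorem is stated as $Con(Z_2+HP(L))$ rather than as ``$H_{\omega_1}^{V[x]}\models Z_2+HP(L)$''. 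Your claim that \emph{every} $x$-admissible below $\omega_1^{V[x]}$ lands in $S$ is not established by this method and should not be asserted; indeed the concluding corollary of the paper shows $SRP^{L}(\omega_1)$ is strictly stronger than $Z_2+HP(L)$ only in consistency strength, via this set-model construction.
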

\begin{proof}
Suppose $SRP^{L}(\omega_1)$ holds and we want to build a model of $Z_2\,+ \,HP(L)$. By Proposition \ref{srp of omega first}, $\omega_1$ is limit cardinal in $L$. i.e. $\{\alpha<\omega_1\mid \alpha$ is an $L$-cardinal\} is a club. Let $C=\{\omega\leq\alpha<\omega_{1}\mid \alpha$ is an $L$-cardinal and $L_{\alpha}\prec L_{\omega_1}\}$. Note that $C$ is a club.  Let \[D=\{\gamma<\omega_1\mid  (L_{\gamma}[C], C\cap\gamma)\prec (L_{\omega_1}[C], C)\}.\]
Note that $D\subseteq C$. Define  $F: \omega^{\omega}\rightarrow \omega^{\omega}$ as follows: if $y\subseteq\omega$ codes $\gamma$, then $F(y)$ is a real which codes $(\beta, C\cap\beta)$ where $\beta$ is the least element of $D$ such that $\beta>\gamma$ (since $D$ is a club in $\omega_1$, such a $\beta$ exists); if $y$ does not code an ordinal, let $F(y)=\emptyset$.

Let $\langle\delta_{\alpha}\mid \alpha<\omega_1\rangle$ be a pairwise almost disjoint set of reals such that $\delta_{\alpha}$ is the $<_{L[C]}$-least real which is almost disjoint from any member of $\{\delta_{\beta}\mid \beta<\alpha\}$ and $\langle\delta_{\nu}\mid \nu<\omega\rangle\in L_{\alpha}$ for every admissible ordinal $\alpha<\omega_1$.

Let $\langle x_{\alpha}\mid \alpha<\omega_1\rangle$ be the enumeration of $\mathcal{P}(\omega)$ in  $L[C]$ in the order of construction. Let $Z_F \subseteq\omega_1$ be defined as:
\[Z_{F}=\{\alpha\cdot\omega+i\mid   \alpha<\omega_1\wedge i\in F(x_{\alpha}) \}.\]
Now we do almost disjoint forcing to code $Z_{F}$  via $\langle\delta_{\alpha}\mid \alpha<\omega_1\rangle$. Then we get a real $x$ such that $\alpha\in Z_{F}\Leftrightarrow |x\cap \delta_{\alpha}|<\omega$. The forcing is $c.c.c$ and hence preserves all cardinals.

Now we work in $L[x]$. Take the least $\theta$ such that $L_{\theta}[x]\models Z_{2}$. We will show that $L_{\theta}[x]\models HP(L)$. By absoluteness, it suffices to show that if $\alpha<\theta$ is $x$-admissible, then $\alpha$ is an $L$-cardinal. Fix some $x$-admissible $\alpha<\theta$ and let
\[\gamma_0=\sup(\alpha\cap D).\]

If $\alpha\cap D=\emptyset$, let $\gamma_0=0$.   Note that if $\gamma_0>0$, then $\gamma_0\in D$. We assume that $\gamma_0<\alpha$ and try to get a contradiction. Let $\alpha_0$ be the least admissible ordinal such that $\alpha_0>\gamma_0$. Since $\alpha$ is admissible, $\alpha_0\leq\alpha$.

\begin{claim}\label{show that}
$C\cap\alpha_0=C\cap (\gamma_0+1).$
\end{claim}
\begin{proof}
We show that $C\cap\alpha_0\subseteq C\cap (\gamma_0+1)$. Suppose $\gamma\in C\cap\alpha_0$ and $\gamma>\gamma_0$. Since $\gamma\in C, L_{\gamma}\prec L_{\omega_1}$. Since $\alpha_0$ is definable from $\gamma_0$, it follows that $\alpha_0$ is definable in $L_{\gamma}$. So  $\alpha_0\leq\gamma$. Contradiction.
\end{proof}

By Claim \ref{show that}, $L_{\alpha_0}[C]=L_{\alpha_0}[C\cap\gamma_0]$. We need the following lemma to get that $L_{\gamma_0}[C\cap\gamma_0][x]=L_{\gamma_0}[x]$ in Claim \ref{key calim on countable}.

\begin{lemma}\label{key claim in step}
$C\cap\gamma_0\in L_{\gamma_0+1}[x].$
\end{lemma}
\begin{proof}
We prove by induction that for any $\gamma\in D\cap\theta, C\cap\gamma\in L_{\gamma+1}[x]$. Fix $\gamma\in D\cap\theta$. Suppose for any $\gamma^{\prime}\in D\cap\gamma$, $C\cap\gamma^{\prime}\in L_{\gamma^{\prime}+1}[x]$. We show that $C\cap\gamma\in L_{\gamma+1}[x]$.

Case 1: There is $\gamma^{\prime}\in D$ such that $\gamma$ is the least element of $D$ such that $\gamma>\gamma^{\prime}$. Let $\eta$ be the least admissible ordinal such that $\eta>\gamma^{\prime}$. By a similar argument as in Claim \ref{show that}, $C\cap\eta=C\cap(\gamma^{\prime}+1)$. From our definitions, for any $\beta<\eta$ we have: (1) $\langle x_{\xi}\mid\xi\in\beta\rangle\in L_{\eta}[C]=L_{\eta}[C\cap\gamma^{\prime}]$; (2) $\langle\delta_{\xi}\mid\xi\in\beta\rangle\in L_{\eta}[C]=L_{\eta}[C\cap\gamma^{\prime}]$; and (3) $\langle x_{\xi}\mid\xi\in\eta\rangle$ enumerates $\mathcal{P}(\omega)\cap L_{\eta}[C]=\mathcal{P}(\omega)\cap L_{\eta}[C\cap\gamma^{\prime}]$.

Suppose $y\subseteq\omega$ and $y\in L_{\eta}[C\cap\gamma^{\prime}]$. Then $y=x_{\xi}$ for some $\xi<\eta$. Note that $\xi\cdot\omega+i<\eta$ for any $i<\omega$. Moreover, $i\in F(y)$ if and only if $|x\cap\delta_{\xi\cdot\omega+i}|<\omega$. So $F(y)\in L_{\eta}[C\cap\gamma^{\prime}][x]$. Hence we have shown that if $y\in\mathcal{P}(\omega)\cap L_{\eta}[C\cap\gamma^{\prime}]$, then $F(y)\in L_{\eta}[C\cap\gamma^{\prime}, x]$.

\begin{claim}\label{first key calim on countable}
$L_{\eta}[C\cap\gamma^{\prime}]\models\gamma^{\prime}<\omega_1$.
\end{claim}
\begin{proof}
Suppose, towards a contradiction, that
\begin{equation}\label{key equation in sec two}
\text{$\gamma^{\prime}=\omega_1^{L_{\eta}[C\cap\gamma^{\prime}]}$.}
\end{equation}
Let $P$ be the almost disjoint forcing that codes $Z_{F}$ via the almost disjoint system $\langle\delta_\beta \mid \beta<\omega_1\rangle$.\footnote{$P=[\omega]^{<\omega}\times [Z_{F}]^{<\omega}$. $(p,q)\leq (p^{\prime},q^{\prime})$ \text{if{f}} $p\supseteq  p^{\prime}, q\supseteq q^{\prime}$ and $\forall\alpha\in q^{\prime}(p\cap \delta_{\alpha}\subseteq p^{\prime})$.} From our definitions of $C, F$ and $\langle x_{\alpha}\mid \alpha<\omega_1\rangle$, $P$ is a definable subset of $L_{\omega_1}[C]$. Standard argument gives that $P$ is $\omega_1$-c.c. in $L_{\omega_1}[C]$.\footnote{i.e. If $D\subseteq P$ is a maximal antichain with $D\in L_{\omega_1}[C]$, then $L_{\omega_1}[C]\models D$ is at most countable.}  Let $P^{\ast}=P\cap L_{\gamma^{\prime}}[C]$. Since $\gamma^{\prime}\in D$,
\begin{equation}\label{property of gamma zero}
(L_{\gamma^{\prime}}[C], C\cap\gamma^{\prime})\prec (L_{\omega_1}[C], C).
\end{equation}
Suppose $D^{\ast}\subseteq P^{\ast}$ is a maximal antichain with $D^{\ast}\in L_{\gamma^{\prime}}[C]$. Then by (\ref{property of gamma zero}), $D^{\ast}$ is a maximal antichain in $P$. Since $L_{\omega_1}[C]\models D^{\ast}$ is at most countable, by (\ref{property of gamma zero}), $L_{\gamma^{\prime}}[C]\models D^{\ast}$ is at most countable. So $P^{\ast}$ is $\omega_1$-c.c. in $L_{\gamma^{\prime}}[C]$.
By (\ref{key equation in sec two}),
\begin{equation}\label{equation on two}
\text{$L_{\eta}[C\cap\gamma^{\prime}]\cap 2^{\omega}=L_{\gamma^{\prime}}[C\cap\gamma^{\prime}]\cap 2^{\omega}$.}
\end{equation}
Since $P^{\ast}$ is $\omega_1$-c.c. in $L_{\gamma^{\prime}}[C]$, by (\ref{equation on two}), $P^{\ast}$ is $\omega_1$-c.c in $L_{\eta}[C\cap\gamma^{\prime}]$.

We show that $x$ is generic over $L_{\eta}[C\cap\gamma^{\prime}]$ for $P^{\ast}$. Let $Y\subseteq P^{\ast}$ be a maximal antichain with $Y\in L_{\eta}[C\cap\gamma^{\prime}]$. Since $P^{\ast}$ is $\omega_1$-c.c in $L_{\eta}[C\cap\gamma^{\prime}]$, by (\ref{key equation in sec two}),    $Y\in L_{\gamma^{\prime}}[C\cap\gamma^{\prime}]$. By (\ref{property of gamma zero}), $Y$ is a maximal antichain in $P$. So the filter given by $x$ meets $Y$.

Note that $\gamma^{\prime}=\omega_1^{L_{\eta}[C\cap\gamma^{\prime}]}=\omega_1^{L_{\eta}[C\cap\gamma^{\prime}][x]}$. Since $\gamma^{\prime}\in D$, by induction hypothesis $L_{\gamma^{\prime}}[C\cap\gamma^{\prime}, x]=L_{\gamma^{\prime}}[x]$. So  $L_{\gamma^{\prime}}[x]\models Z_2$ which contradicts the minimality of $\theta$.
\end{proof}

Take $y\in L_{\eta}[C\cap\gamma^{\prime}]\cap\mathcal{P}(\omega)$ such that $y$ codes $\gamma^{\prime}$. So $F(y)$ codes $(\gamma,  C\cap\gamma)$ and $F(y)\in L_{\eta}[C\cap\gamma^{\prime}, x]$. Then $F(y)$ is definable in $L_{\gamma}[C\cap\gamma^{\prime}, x]$. By induction hypothesis, $F(y)\in L_{\gamma+1}[x]$. Since $F(y)$ codes $C\cap\gamma$, $C\cap\gamma\in L_{\gamma+1}[x]$.

Case 2: $\gamma$ is the least element of $D$. Take $y\in L_{\omega}[C]\cap\mathcal{P}(\omega)$ such that $y$ codes $0$. Then $y=x_0$. Since $\gamma$ is the least element of $D$ such that $\gamma>0$, $F(y)$ codes $C\cap\gamma$. Note that for any $\beta<\omega,\langle\delta_{\xi}\mid\xi\in\beta\rangle\in L_{\omega}[C]$ and $i\in F(y)$ if and only if $|x\cap\delta_{i}|$ is finite. So
$F(y)$ is definable in $L_{\omega}[x,C]$. Since $C\cap \omega=\emptyset$,
$F(y)\in L_{\gamma+1}[x]$. Since $F(y)$ codes $C\cap\gamma$, $C\cap\gamma\in L_{\gamma+1}[x]$.

Case 3: $\gamma$ is a limit point of $D$. Then a standard argument gives that $C\cap\gamma\in L_{\gamma+1}[x]$ by induction hypothesis.

Since $\gamma_0\in D\cap\theta$, we have $C\cap\gamma_0\in L_{\gamma_0+1}[x]$.
\end{proof}

\begin{claim}\label{key calim on countable}
$\gamma_0$ is countable in $L_{\alpha_0}[C\cap\gamma_0]$.
\end{claim}
\begin{proof}
The proof is essentially the same as Claim \ref{first key calim on countable}  \,(replace $\eta$ by $\alpha_0$ and $\gamma^{\prime}$ by $\gamma_0$). Suppose, towards a contradiction, that $\gamma_0=\omega_1^{L_{\alpha_0}[C\cap\gamma_0]}$.
By the similar argument as Claim \ref{first key calim on countable}, we can show that $x$ is generic over $L_{\alpha_0}[C\cap\gamma_0]$ for $P^{\ast}=P\cap L_{\gamma_0}[C]$.\footnote{$P$ is the almost disjoint forcing that codes $Z_{F}$ via $\langle\delta_{\beta}\mid \beta<\omega_1\rangle$.} Since $\gamma_0=\omega_1^{L_{\alpha_0}[C\cap\gamma_0]}=\omega_1^{L_{\alpha_0}[C\cap\gamma_0][x]}$ and by Lemma \ref{key claim in step}, $L_{\gamma_0}[C\cap\gamma_0][x]=L_{\gamma_0}[x]$, we have $L_{\gamma_0}[x]\models Z_2$ which contradicts the minimality of $\theta$.
\end{proof}

From our definitions, we have:
\begin{equation}\label{rule for adjs}
\text{For $\eta<\alpha_0, \langle\delta_{\beta}: \beta<\eta\rangle\in L_{\alpha_0}[C]=L_{\alpha_0}[C\cap\gamma_0]$;}
\end{equation}
\begin{equation}\label{euma rule}
\text{$\langle x_{\alpha}\mid \alpha<\alpha_0\rangle$ enumerates $\mathcal{P}(\omega)\cap L_{\alpha_0}[C]=\mathcal{P}(\omega)\cap L_{\alpha_0}[C\cap\gamma_0]$.}
\end{equation}

\begin{claim}\label{countable claim}
If $y\in \mathcal{P}(\omega)\cap  L_{\alpha_0}[C\cap\gamma_0]$, then $F(y)\in L_{\alpha_0}[x]$.
\end{claim}
\begin{proof}
Suppose $y\in \mathcal{P}(\omega)\cap  L_{\alpha_0}[C\cap\gamma_0]$. By \eqref{euma rule}, $y=x_{\xi}$ for some $\xi<\alpha_0$. Note that for $\xi<\alpha_0, \xi\cdot\omega+i<\alpha_0$ for any $i\in\omega$. By the definition of $Z_{F}, i\in F(y)\Leftrightarrow \xi\cdot\omega+i\in Z_{F}\Leftrightarrow |x\cap \delta_{\xi\cdot\omega+i}|<\omega$. By \eqref{rule for adjs}, $F(y)\in L_{\alpha_0}[C\cap\gamma_0][x]$. Since $C\cap\gamma_0\in L_{\gamma_0+1}[x]$ by Lemma \ref{key claim in step}, we have $L_{\alpha_0}[C\cap\gamma_0][x]=L_{\alpha_0}[x]$. So $F(y)\in L_{\alpha_0}[x]$.
\end{proof}

By Claim \ref{key calim on countable}, there exists a real $y\in L_{\alpha_0}[C\cap\gamma_0]$ such that $y$ codes $\gamma_0$. Note that $F(y)$ codes $\gamma_1$ where $\gamma_1$ is the least element of $C$ such that $\gamma_1>\gamma_0$ and $(L_{\gamma_1}[C], C\cap\gamma_1)\prec (L_{\omega_1}[C], C)$. Since $F(y)$ codes $\gamma_1$ and $F(y)\in L_{\alpha_0}[x]$, $\gamma_1<\alpha_0$. Since $\gamma_1<\alpha$ and $(L_{\gamma_1}[C], C\cap\gamma_1)\prec (L_{\omega_1}[C], C)$, by the definition of $\gamma_0$, we have that $\gamma_1\leq\gamma_0$. Contradiction.

So the assumption $\gamma_0<\alpha$ is false. Then $\gamma_0=\alpha$. So $\alpha \in C$ and hence $\alpha$ is an $L$-cardinal. We have shown  that $L_{\theta}[x]\models Z_2\,+ \,HP(L)$.
\end{proof}

\begin{theorem}\label{result on class forcing}
(\cite[Theorem 3.1, 3.2]{YongCheng2}) \quad (Class forcing) \quad $Z_2\,+ \,HP(L)$ is equiconsistent with $ZFC$ and $Z_3\,+ \,HP(L)$ is equiconsistent with $ZFC\, +$  there exists a remarkable cardinal.
\end{theorem}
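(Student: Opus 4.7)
The theorem packages two equiconsistencies. In each case the substantive direction is the upper bound, i.e.\ constructing a model of the arithmetic theory plus $HP(L)$ from a model of the large-cardinal theory; the lower bounds will either follow by routine internal-model computations (in the $Z_2$ case) or are already isolated as Theorem 3.2 of \cite{YongCheng2}, cited in the proof of Theorem \ref{strength of omega two}, which gives $Con(Z_3+HP(L))\Rightarrow L\models$ there is a remarkable cardinal. The plan for the upper bounds is to generalise the set-forcing argument of Theorem \ref{compare first level thm} to a class-forcing argument, thereby dispensing with the ground-model hypothesis $SRP^{L}(\omega_1)$ and instead working directly over $L$ (or over a mild extension of $L$ in the $Z_3$ case).

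For $Con(ZFC)\Rightarrow Con(Z_2+HP(L))$, I would start in a model of $ZFC$ and work internally over $L$. The club $C\subseteq\omega_1$ of Theorem \ref{compare first level thm} is replaced by the proper class $C=\{\alpha:L_\alpha\prec_{\Sigma_1} L\}$ (with whichever $\Sigma_n$ level is needed for the verifications), and $D$ by the class of $\gamma$ with $(L_\gamma[C],C\cap\gamma)\prec(L[C],C)$. The almost disjoint system $\langle\delta_\alpha:\alpha\in\mathrm{Ord}\rangle$, the function $F:\omega^\omega\to\omega^\omega$, and the target set $Z_F\subseteq\mathrm{Ord}$ are defined exactly as in the set-sized proof but of class length. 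A single real $x$ is added coding $Z_F$ by a Jensen-style class-sized almost disjoint forcing $\mathbb{P}$, presented as a directed union of set-sized $\omega_1$-c.c.\ approximations; $\mathbb{P}$ preserves all cardinals and cofinalities. Taking $\theta$ least with $L_\theta[x]\models Z_2$, one reproduces Claims \ref{show that}--\ref{countable claim} and Lemma \ref{key claim in step} verbatim for the class-length setup to verify $L_\theta[x]\models HP(L)$.

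For $Con(ZFC+\text{remarkable})\Rightarrow Con(Z_3+HP(L))$, I would first collapse a remarkable cardinal $\kappa$ to $\omega_1$ by forcing with $Col(\omega,{<}\kappa)$ over $L$. By Corollary \ref{key coro of above lemma}, the extension $L[G]$ satisfies $WRP^{L}(\gamma)$ for every $L$-cardinal $\gamma\geq\kappa=\omega_1^{L[G]}$; this is the higher-cardinal analogue, in the form of stationarily many good countable substructures, of the ``$\omega_1$ is a limit cardinal in $L$'' club used in the $Z_2$ argument. Over $L[G]$ I would then run the same class almost disjoint coding, producing a real $x$ and taking $\theta$ least with $L_\theta[x]\models Z_3$. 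The verification $L_\theta[x]\models HP(L)$ parallels the $Z_2$ case, now exploiting stationarity rather than clubness: for a given $x$-admissible $\alpha<\theta$, an application of $WRP^{L}(\gamma)$ for some $L$-cardinal $\gamma$ with $\gamma\in X\prec L_\kappa[G]$ produces a good countable substructure whose transitive collapse pins $\alpha$ down as an $L$-cardinal.

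The main obstacle is the care needed to set up the class-sized coding $\mathbb{P}$ so that cardinals and cofinalities are preserved and the analogues of Claim \ref{key calim on countable} and Lemma \ref{key claim in step} still go through --- in particular, the chain condition of the set-sized components must be exactly $\omega_1$-c.c.\ for the argument that $\gamma_0$ is countable in $L_{\alpha_0}[C\cap\gamma_0]$. In the $Z_3$ case there is the additional delicacy that the coding must be stable under passing from the club-many situation of the $Z_2$ case to the stationary-many situation supplied by the remarkable cardinal; this is the point where the characterisation of remarkability via Lemma \ref{key lemma on relationship between remark and wfp} is used.
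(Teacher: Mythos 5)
The paper does not prove this theorem at all: it is imported verbatim from \cite{YongCheng2} (Theorems 3.1 and 3.2 there), so there is no internal argument to compare yours against. Judged on its own terms, your reconstruction has the right general shape for the $Z_2$ upper bound but contains gaps that would sink it as written. First, the class you propose to code, $C=\{\alpha: L_\alpha\prec_{\Sigma_1}L\}$, does not consist of $L$-cardinals (the least such $\alpha$ is countable in $L$), yet the entire verification terminates with ``the admissible $\alpha$ lies in $C$, hence is an $L$-cardinal''; you must code (a suitably elementary club subclass of) the class of $L$-cardinals itself. Second, and relatedly, over $L$ there is \emph{no} club of $L$-cardinals below $\omega_1$ --- that is precisely why Theorem \ref{compare first level thm} needs $SRP^{L}(\omega_1)$ --- so the class version genuinely must compress a proper class of ordinals into a single real. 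That is Jensen's coding-the-universe forcing, a descent through all the cardinals via successive almost disjoint (and reshaping) steps with delicate distributivity arguments; it is not ``a directed union of set-sized $\omega_1$-c.c.\ approximations,'' and the $\omega_1$-c.c.\ bookkeeping you lean on for the analogue of Claim \ref{key calim on countable} is exactly what has to be replaced.

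The $Z_3$ case is where the proposal goes genuinely wrong rather than merely being underspecified. You propose to collapse a remarkable $\kappa$ to $\omega_1$, invoke Corollary \ref{key coro of above lemma} to get $WRP^{L}(\gamma)$ for $L$-cardinals $\gamma\geq\kappa$, and then ``run the same coding exploiting stationarity rather than clubness.'' But the coding argument cannot run on stationarity: $HP(L)$ in the final model demands that \emph{every} $x$-admissible below $\theta$ be an $L$-cardinal, and the verification forces each admissible to be a limit point of, hence a member of, the coded club --- so you need a club of $L$-cardinals below the eventual $\omega_2$, each with the \emph{strong} reflecting property (so that the reflection can be pushed down a second level, from $\omega_2$ to $\omega_1$ to countable admissibles). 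Passing from the stationary sets supplied by remarkability to such a club requires additional club-shooting/reshaping forcings; this is exactly the content of Section 3.1 of \cite{YongCheng1} as quoted in the proof of Theorem \ref{strength of omega two}, followed by the multi-stage codings of Sections 3.2--3.4. Your sketch omits this step entirely, and no amount of care in setting up $\mathbb{P}$ will recover it, because $WRP^{L}$ is simply too weak an input for the conclusion you need at each admissible.
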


\begin{corollary}
\begin{enumerate}[(a)]
  \item For $n\geq 3, SRP^{L}(\omega_n)$ is equivalent to $HP(L)$.
  \item (Set forcing) \quad $SRP^{L}(\omega_2)$ is strictly stronger than $Z_3\,+ \,HP(L)$.
      \item (Set forcing) \quad $SRP^{L}(\omega_1)$ is strictly stronger than $Z_2\,+ \,HP(L)$.
\end{enumerate}
\end{corollary}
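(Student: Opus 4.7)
The plan is to derive each part by composing the equivalences and equiconsistencies already catalogued earlier in the paper, with no essentially new argument required beyond some absoluteness bookkeeping.

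For part (a), I would chain the two $0^{\sharp}$-characterizations. By Theorem \ref{characterization of srp above third level}, $SRP^{L}(\omega_{n})$ for $n\geq 3$ is equivalent to the existence of $0^{\sharp}$. For $HP(L)\Leftrightarrow 0^{\sharp}$ exists, the $(\Leftarrow)$ direction is the observation recorded at the start of the proof of Theorem \ref{cited theorem from Woodin} (every $0^{\sharp}$-admissible ordinal is an $L$-cardinal, provable in $Z_{2}$); for $(\Rightarrow)$, observe that $H_{\beth_{2}^{+}}\models Z_{4}$, and that any real witness $x$ to $HP(L)$ in $V$ lies in $H_{\beth_{2}^{+}}$, so by absoluteness of $x$-admissibility and of ``is an $L$-cardinal'' between transitive models one gets $H_{\beth_{2}^{+}}\models Z_{4}+HP(L)$; then Theorem \ref{cited theorem from Woodin} applied inside $H_{\beth_{2}^{+}}$ yields $0^{\sharp}$ in $H_{\beth_{2}^{+}}$, hence in $V$. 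Composing the two equivalences gives (a).

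For part (b), the upper bound is already contained in Theorem \ref{strength of omega two}: via the set forcing argument recalled there from \cite{YongCheng1}, $SRP^{L}(\omega_{2})$ produces a model of $Z_{3}+HP(L)$, hence yields $Con(Z_{3}+HP(L))$. For strictness I would calibrate the two consistency strengths against one another. By Theorem \ref{strength of omega two}, $SRP^{L}(\omega_{2})$ is equiconsistent with ``$ZFC$ $+$ there exists a remarkable cardinal with a weakly inaccessible cardinal above it'', whereas by Theorem \ref{result on class forcing}, $Z_{3}+HP(L)$ is equiconsistent with ``$ZFC$ $+$ there exists a remarkable cardinal''. Since cutting a model of the former off at a weakly inaccessible above the remarkable cardinal produces a model of the latter, the former has strictly greater consistency strength; in particular $Z_{3}+HP(L)$ cannot prove $Con(SRP^{L}(\omega_{2}))$.

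For part (c), the upper bound is exactly Theorem \ref{compare first level thm}: $SRP^{L}(\omega_{1})$ implies $Con(Z_{2}+HP(L))$ via set forcing. For strictness, Theorem \ref{result on class forcing} gives that $Z_{2}+HP(L)$ is equiconsistent with plain $ZFC$, so it suffices to show that $SRP^{L}(\omega_{1})$ strictly exceeds $ZFC$ in consistency strength. By Proposition \ref{srp of omega first}, $SRP^{L}(\omega_{1})$ is equivalent to $\omega_{1}$ being a limit cardinal in $L$; since $\omega_{1}^{V}$ is always regular in $L$, this forces $\omega_{1}^{V}$ to be inaccessible in $L$, so that $L\models$ ``there is an inaccessible cardinal'', a theory strictly stronger than $ZFC$. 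The only step that demands care is the absoluteness argument for $HP(L)$ in part (a); the remaining content of the corollary is a clean bookkeeping exercise in the equiconsistency results already proved.
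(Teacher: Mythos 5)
Your proposal is correct and follows essentially the same route as the paper, which simply cites Theorem \ref{characterization of srp above third level} and Theorem \ref{cited theorem from Woodin} for (a), Theorem \ref{strength of omega two} and Theorem \ref{result on class forcing} for (b), and Theorem \ref{compare first level thm}, Theorem \ref{result on class forcing} and Proposition \ref{srp of omega first} for (c). The additional details you supply --- the absoluteness/$H_{\beth_2^+}$ bookkeeping needed to apply the $Z_4$-theorem in $V$, and the G\"odel-style cut-off arguments for strictness --- are exactly the steps the paper leaves implicit, and they are carried out correctly.
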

\begin{proof}
(a) follows from Theorem \ref{characterization of srp above third level} and Theorem \ref{cited theorem from Woodin}. (b) follows from Theorem \ref{strength of omega two} and Theorem \ref{result on class forcing}. (c) follows from Theorem \ref{compare first level thm}, Theorem \ref{result on class forcing} and Proposition \ref{srp of omega first}.
\end{proof}

\end{document}